      \newtheorem{thm}{Theorem}[section]
      \newtheorem{propo}{Proposition}[section]
      \newtheorem{prop}{Properties}[section]
      \newtheorem{Def}{Definition}[section]
      \newtheorem{rmq}{Remark}[section]
      \newtheorem{lem}{Lemma}[section]
      \newtheorem{nota}{Notation}[section]
\title{\bf \large  Exponential speed of uniform  convergence of the cell density toward equilibrium for subcritical mass in a Patlak-Keller-Segel model }
 \author{\normalsize \bf Alexandre MONTARU\\
 \footnotesize Universit\'e Paris 13, Sorbonne Paris Cit\'e, \\
\footnotesize LAGA, CNRS, UMR 7539,\\ 
\footnotesize F-93430, Villetaneuse, France. \\
\small \textit{montaru@math.univ-paris13.fr} }
\date{}
\begin{document}

\maketitle  

\begin{abstract} \footnotesize
This paper is concerned with a chemotaxis aggregation model for cells, more precisely with a parabolic-elliptic semilinear Patlak-Keller-Segel system in a ball of $\mathbb{R}^N$ for $N\geq 2$. For $N=2$, this system is well known for its critical mass $8\pi$. It has been proved in \cite{Montaru2} that it also exhibits a critical mass phenomenon for $N\geq 3$. The main result of this paper is the exponential speed of uniform convergence of radial solutions toward the unique steady state in the subcritical case for $N\geq 2$.
We stress that this covers in particular the classical Keller-Segel system with $N=2$, and that the result improves on the known results even for this most studied problem.
 A key tool is an associated
one-dimensional degenerate parabolic problem $(PDE_m)$ where $m$ is proportional to the total mass of cells. The proof exploits its formal  gradient flow structure $u_t=-\nabla \mathcal{F}[u(t)]$ on an "infinite
dimensional Riemannian manifold". In particular, we  show a new Hardy type inequality, equivalent to the strict convexity of $\mathcal{F}$ at any steady state of subcritical mass, which heuristically explains  the exponential speed of convergence.
\end{abstract}

\tableofcontents

\section{Introduction}
\subsection{Origin of the problem}
In this paper, we are interested in the speed of convergence toward steady 
states  of solutions of the following problem, called 
$(PDE_m)$ :
\begin{eqnarray}
\label{equ_u_1}
\boxed{u_t=x^{2-\frac{2}{N}}\,u_{xx}+u\,{u_x}^q } & \qquad t>0 & 0<x\leq 1\\
\label{equ_u_2}
u(t,0)=0 &\qquad t\geq 0&\\
\label{equ_u_3}
u(t,1)=m &\qquad t\geq 0&\\
\label{equ_u_4}
u_x(t,x)\geq 0 &\qquad t> 0& 0\leq x\leq 1,
\end{eqnarray}
where  $m\geq 0$, $N$ is an integer greater or equal to $2$ and $q$
is the critical exponent, i.e. $$\boxed{q=\frac{2}{N}}\;.$$

Note that this parabolic problem has degenerate diffusion since $x^{2-\frac{2}{N}}$ vanishes at $x=0$ and that its nonlinearity involves
the gradient and is moreover non Lipschitz when $N\geq 3$ since $0<q<1$.\\

Problem $(PDE_m)$ arose for $N=2$ in the articles \cite{BKLN} of P. Biler, G. Karch, P. Laurençot and T. Nadzieja and  \cite{Souplet-Nikos} of N. Kavallaris and P. Souplet and then in  \cite{Montaru1,Montaru2} for $N\geq 3$ as a key tool 
    in the study of radial solutions of 
 the following  chemotaxis system $(PKS_q)$, supposed to describe a collection of 
cells diffusing in the open unit ball $D\subset \mathbb{R}^N$  and emitting a chemical which attracts themselves : 
\begin{eqnarray}
\label{PKS}
\rho_t= \Delta \rho-\nabla [\rho^q \, \nabla c]&\qquad t>0& \mbox{on } D\\
- \Delta c= \rho&\qquad t>0& \mbox{on } D,
\end{eqnarray}
with the following boundary conditions :  
\begin{equation}
\label{BC-rho}
\frac{\partial \rho}{\partial \nu}-\rho^q \, \frac{\partial c}{\partial \nu}=0 \mbox{ 
\quad  on } \partial D
\end{equation}
\begin{equation}
\label{BC-c}
c=0 \mbox{\quad  on }\partial D ,
\end{equation}
where $\rho$ is the cell density and $c$ the chemoattractant concentration. \\

Note that this  model  relies on the following assumptions : 
\begin{itemize}
\item Cells  diffuse much more slowly than the chemoattractant.
\item The cell flux $\vec{F}$ due to the chemoattractant is here described by
$\vec{F}=\chi \, \nabla c$ where   
$$\chi(\rho)=\rho^q$$
is the sensitivity of cells to the chemoattractant.
 \item On the boundary $\partial D$, there is a no flux condition  for 
$\rho$  
and a Dirichlet conditions for $c$.
\end{itemize}

This system $(PKS_q)$ is a particular case of the Patlak-Keller-Segel  model. To know  more about the latter, the reader can refer to the original works \cite{Patlak} of C.S. Patlak and \cite{KS} of E.F. Keller and L.A. Segel. For a review on mathematics of chemotaxis, see the chapter written by M.A. Herrero in \cite{Herrero} and the article \cite{HP} of  T. Hillen and K. J. Painter. For a review on the Patlak-Keller-Segel model, 
see both articles of D. Horstmann \cite{Horstmann1,Horstmann2}.\\
We also would like to very briefly recall some important results for the case  $N=2$ and $q=1$: \\
- It is known thanks to the works \cite{HV} of M.A. Herrero and J.L. Velazquez and  \cite{BKLN} of P. Biler, G. Karch, P. Laurençot and T. Nadzieja  that $8\pi$ is a critical mass for radial solutions in a ball. \\- In the case of the whole plane $\mathbb{R}^2$, this system has a similar behaviour. See  
\cite{Dolbeault-Perthame} by J. Dolbeault and B. Perthame, \cite{BCM} by A. Blanchet, J.A. Carrillo and N. Masmoudi, \cite{BKLN2} by P. Biler, G. Karch, P. Laurençot and T. Nadzieja and \cite{BDP} by A. Blanchet, J. Dolbeault and B. Perthame.\\
- For general solutions in a bounded domain of $\mathbb{R}^2$, the results are slightly different since  for a mass $4\pi$ blow-up at a point of the boundary of the domain can occur (see the book \cite{Suzuki} of T. Suzuki). 
\\

We now want to recall what is essential to 
know about the relation between problems
$(PKS_q)$ and  $(PDE_m)$ (much more
 can be found  
in \cite{Montaru2}):
\begin{itemize}
\item $m$ is proportional to the total mass of cells  
$\int_B \rho $ which is a conserved quantity in time.
\item The derivative of $u$ is the quantity with physical interest since $u_x$ is 
proportional to the cell density $\rho$, up to a rescaling in time and a change of 
variable.\\
 More precisely, denoting $\rho(t,y)=\tilde{\rho}(t,|y|)$ 
for $t\geq 0$ and $y\in\overline{D}$, we have 
$$ \tilde{\rho}(t,x)=N^{\frac{2}{q}}\,u_x(N^2\,t,x^N) \mbox{\qquad for all }x\in [0,1].$$
\item The power 
$q=\frac{2}{N}$ is critical. Indeed, as a particular case of \cite{HW} by D. Horstmann and M. Winkler, we know  that the solutions are global in time  when $q<\frac{2}{N}$ and can blow up if $q>\frac{2}{N}$
\end{itemize}
\bigskip
From now on, we will only focus on problem $(PDE_m)$, which becomes 
our chemotaxis model.
We will now list some facts that we have obtained in \cite{Montaru1,Montaru2}
for $N\geq 3$ and will later establish some similar results that we need for the case
 $N=2$.\\

\subsubsection{Case of dimension $N\geq 3$}
In \cite{Montaru1}, we have proved  the existence of a unique maximal classical  solution u 
of problem $(PDE_m)$ with initial condition $u_0\in Y_m$ and  existence time $T_{max}=T_{max}(u_0)>0$, where we denote
$$\boxed{Y_m=\{u\in C([0;1]),\:u \mbox{ nondecreasing },\; u'(0) \mbox{ exists, } u(0)=0,\;u(1)=m\}}$$ 
and "classical" means here that
$$u\in C([0,T_{max})\times [0,1])\bigcap
C^1((0,T_{max})\times [0,1])\bigcap C^{1,2}((0,T_{max})\times (0,1]).$$
Actually, we obtained more information about the regularity of the solutions and will refer to \cite{Montaru1} when necessary. \\

In \cite{Montaru2}, we showed that the stationary solutions of $(PDE_m)$ are the restrictions to $[0,1]$ 
of a family of functions $(U_a)_{a\geq 0}$ on $[0,+\infty)$ with the following simple structure :
\begin{itemize}
\item $U_1\in C^1([0,1])\cap C^2((0,1])$, $U_1(0)=0$, $\dot{U_1}(0)=1$, $U_1$ is increasing on $[0,A]$ for some $A>0$ and reaches its maximum $M$ at 
$x=A$ after which $U_1$ is flat.
\item All $(U_a)_{a\geq 0}$ are obtained by dilation of $U_1$, i.e. $U_a(x)=U_1(ax)$ for all $x\geq 0$.
\end{itemize}

An easy consequence of this description is that 
\begin{itemize}
\item If $0\leq m < M$, then there exists a unique stationary solution. The latter is given by $U_a|_{[0,1]}$, where $a=a(m)\in [0,A)$ is uniquely determined by $m$.
\item If $m=M$, there exists a continuum of steady states :  $\left(U_a|_{[0,1]}\right)_{a\geq A}$.\\ Note that the corresponding cell densities have their support strictly inside~$D$ when $a>A$.
\item If $m>M$, there is no stationary solution.
\end{itemize}

We call $M$ the critical mass of problem $(PDE_m)$, which is justified 
by the following result proved in \cite[Theorems 1.2 and 1.3]{Montaru2}, valid for any  $u_0\in Y_m$ :
\begin{itemize} 
\item If $m\leq M$, then $$T_{max}(u_0)=+\infty$$ and there exists $a\geq 0$ such that
 $$u(t) \underset{t \rightarrow + \infty}{ \longrightarrow } 
U_a \mbox{ \quad in \quad } C^1([0,1]).$$ 
More precisely, $a=a(m)\in [0,A)$   if $0\leq m<M$ and $a\geq A$ if $m=M$.
\item If $m>M$,  then  
$$T_{max}(u_0)<\infty.$$
\end{itemize}

\subsubsection{Case of dimension $N=2$}
For $N=2$, there is also such a critical mass phenomenon, well studied,
with critical mass $M=2$ corresponding to $8\pi$ in the original
Patlak-Keller-Segel model $(PKS_1)$ (see \cite{BKLN,HV}).\\

Problem $(PDE_m)$ then reads
\begin{eqnarray}
\label{n2_equ_u_1}
\boxed{u_t=x\,u_{xx}+u\,u_x } & \qquad t>0 & 0<x\leq 1\\
\label{n2_equ_u_2}
u(t,0)=0 &\qquad t\geq 0&\\
\label{n2_equ_u_3}
u(t,1)=m &\qquad t\geq 0&\\
\label{n2_equ_u_4}
u_x(t,x)\geq 0 &\qquad t> 0& 0\leq x\leq 1,
\end{eqnarray}
where  $m\geq 0$.\\

It is easy to see that its stationary solutions are all $$(\left.U_a\right|_{[0,1]})_{a\geq 0}$$ where 
$$U_a(x)=U_1(ax)$$ 
and 
$$U_1(x)=\frac{x}{1+\frac{x}{2}} $$
for all $x\in[0,1]$, $a\geq 0$.\\

The description of the set of steady states easily gives :

\begin{itemize}
\item
If $m<2$, there exists a unique classical steady state  of problem $(PDE_m)$, namely $\left.U_a\right|_{[0,1]}$ where 
$$a=a(m)=\frac{m}{1-\frac{m}{2}}\in [0,+\infty). $$
\item If $m\geq 2$, there is no classical stationary solution of problem 
$(PDE_m)$ but only a singular one $\overline{U}=m$ (singular in the sense that the boundary condition at $x=0$ is lost).
\end{itemize}

\begin{rmq}
A deep difference with the case $N\geq 3$ is that the steady states 
here do not reach their upper bound 2 and that the critical value switches from the regular
to the singular regime. \\
Actually, for all $a> 0$,  $\dot{U_a}>0$ on $[0,1]$. We will 
see in Theorem \ref{thm_existence_u} vii)
that this property is shared with the solution u at any 
time $t>0$, which means, coming back to the cell density
 interpretation, that cells are present in the whole ball $D$. 
This is in contrast with the case $N\geq 3$, where, 
at least in the critical mass case, the cells are sometimes 
present only in a ball strictly inside $D$.
\end{rmq}

It is  possible to show a similar result as 
for $N\geq3$, i.e. that if $0\leq m<2$, for any $u_0\in Y_m$, then
$$T_{max}(u_0)=+\infty$$ and  $$u(t) \underset{t \rightarrow + \infty}{ \longrightarrow } 
U_a \mbox{ \quad in \quad } C^1([0,1])$$ 
where $$a=a(m)=\frac{m}{1-\frac{m}{2}}\in [0,+\infty). $$

In \cite{BKLN}, for the subcritical case $0\leq m< 2$, the exponential speed of convergence of $u(t)$ toward the unique stationary solution $U_{a(m)}$ as $t\rightarrow +\infty$ 
was proved  for all $L^p$ norms with $1\leq p<\infty$ when the initial condition  $u_0$ is continuous and nondecreasing with $u_0(0)=0$ and $u_0(1)=m$ (a larger class than $Y_m$) and also in $L^\infty$ norm for some initial conditions for which global in time $W^{1,\infty}$  bound is known (the result then following by interpolation between $L^1$ and $W^{1,\infty}$).\\
As far as we know, the mere convergence in $C^1$ norm was unknown, and
a stronger result (the exponential convergence in $C^1$ norm) will actually
be  obtained below, by a very different technique from that in \cite{BKLN}. See section \ref{N=2} for more details.

\subsection{Main result}

The main goal of this paper is to study  the speed of convergence  of solutions of $(PDE_m)$ toward the unique stationary solution $U_a$ for the 
subcritical case $0<m<M$ ($m=0$ being obvious since $u=0$ because  $u_0\in Y_0=\{0\}$) 
 when 
\begin{equation}
\label{H1}
\boxed{N\geq 2}\,.
\end{equation} 
 
From now on, we fix 
\begin{equation}
\label{H2}
\boxed{0<m<M}
\end{equation} 
and
\begin{equation}
\label{H3}
\boxed{u_0\in Y_m}\;.
\end{equation} 
We denote $u$ the global solution of $(PDE_m)$ with initial condition $u_0$. 
We know  that
$$u(t) \underset{t \rightarrow + \infty}{ \longrightarrow } 
U_a \mbox{ \quad in \quad } C^1([0,1]),$$
where $U_a=U_{a(m)}$ is the unique stationary state of problem $(PDE_m)$. \\

Building on this qualitative information, we shall obtain a stronger quantitative
one, namely the exponential speed of convergence in $C^1([0,1])$.

\begin{thm}
\label{thm_convergence_expo_C1}
Assume (\ref{H1})(\ref{H2})(\ref{H3}). \\
Let $U_a=U_{a(m)}$ be the unique stationary solution of  $(PDE_m)$, i.e. problem (\ref{equ_u_1})-(\ref{equ_u_4}), and let
 $\lambda_1=\lambda_1(a)>1$ be the best constant of the Hardy type inequality in Proposition \ref{prop_existence_lambda1} below.\\ Let $\lambda\in(0,\lambda_1-1)$.\\
 Then there exists $C=C(u_0,\lambda)>0$ such that for all $t\geq 1$,
$$\|u(t)-U_a\|_{C^1([0,1])}\leq C \exp(-\lambda\dot{U_a}(1)^q\;t).$$
\end{thm}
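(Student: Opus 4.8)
The plan is to exploit the gradient-flow structure $u_t=-\nabla\mathcal F[u(t)]$ announced in the abstract, treating the convergence as exponential relaxation of a dissipative system toward the minimizer $U_a$ of $\mathcal F$, the spectral gap being furnished by the Hardy-type inequality of Proposition \ref{prop_existence_lambda1}. Since the qualitative convergence $u(t)\to U_a$ in $C^1([0,1])$ is already known from \cite{Montaru2} (and its analogue for $N=2$), the entire difficulty is quantitative: turning the statement ``$u$ eventually enters a neighbourhood of $U_a$ on which $\mathcal F$ is strictly convex'' into an explicit exponential rate, and then propagating the decay from the natural energy norm up to $C^1$.

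First I would set up the energy. Writing $\mathcal E(t):=\mathcal F[u(t)]-\mathcal F[U_a]\ge 0$, the gradient-flow identity gives the dissipation
$$\frac{d}{dt}\,\mathcal E(t)=-\bigl\|\nabla\mathcal F[u(t)]\bigr\|_g^2\le 0,$$
where $\|\cdot\|_g$ is the norm of the underlying (formal) Riemannian metric. The core of the argument is then a Polyak--{\L}ojasiewicz type inequality
$$\bigl\|\nabla\mathcal F[u]\bigr\|_g^2\ \ge\ 2\,\mu\,\bigl(\mathcal F[u]-\mathcal F[U_a]\bigr),\qquad \mu=(\lambda_1-1)\,\dot U_a(1)^q,$$
valid for $u$ in a $C^1$-neighbourhood of $U_a$. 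This is exactly where Proposition \ref{prop_existence_lambda1} enters: expanding $\mathcal F$ to second order about $U_a$, one expects the Hessian to decompose as a leading gradient quadratic form, bounded below via the Hardy inequality by $\lambda_1\|\cdot\|_g^2$, minus a zeroth-order term equal to $\|\cdot\|_g^2$, so that the Hessian is bounded below by $(\lambda_1-1)\|\cdot\|_g^2$; this is precisely the strict convexity of $\mathcal F$ at $U_a$, and it yields both the gap $\lambda_1-1$ and, through the time-scale of the flow near $x=1$, the boundary normalization $\dot U_a(1)^q$. Feeding this back into the dissipation identity and integrating (Gronwall) gives $\mathcal E(t)\le \mathcal E(t_0)\,e^{-2\mu(t-t_0)}$ once $u(t_0)$ is close enough to $U_a$; the loss of an arbitrarily small amount (taking any $\lambda<\lambda_1-1$ rather than $\lambda_1-1$ itself) absorbs the nonlinear remainder of the second-order expansion, which by the known $C^1$ convergence is $o(\mathcal E)$.

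It then remains to convert this energy decay into the stated $C^1$ bound. Coercivity of $\mathcal F$ near $U_a$, again a consequence of the Hardy inequality, gives $\mathcal E(t)\ge c\,\|u(t)-U_a\|_H^2$ in the weighted Sobolev norm $H$ adapted to the degenerate diffusion $x^{2-\frac2N}$, so the weighted energy norm of $u(t)-U_a$ decays like $e^{-\mu t}$. Finally I would upgrade to $C^1$ using the parabolic smoothing of $(PDE_m)$: on each time slab $[t,t+1]$ with $t\ge 1$, interior and up-to-the-boundary parabolic estimates, combined with the global bounds and the $C^1$ convergence of \cite{Montaru1,Montaru2}, interpolate the decaying weighted-$L^2$ norm against a uniform higher norm to yield decay of $\|u(t)-U_a\|_{C^1([0,1])}$ at the same rate, conceding once more an arbitrarily small factor in the exponent; adjusting $C$ over the compact range of small $t$ then gives the bound for all $t\ge 1$.

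The main obstacle I anticipate is making the formal gradient-flow computation rigorous in the presence of the degeneracy at $x=0$ and of the non-Lipschitz nonlinearity $u\,u_x^q$ (with $0<q<1$ when $N\ge 3$): choosing the weighted function spaces in which $\mathcal F$, its metric $g$, and its Hessian are simultaneously well defined, justifying the differentiation of $\mathcal E$ and the uniform second-order expansion on a neighbourhood of $U_a$, and establishing the Hardy inequality with the sharp constant $\lambda_1$ while correctly identifying $\lambda_1-1$ as the effective gap. The degenerate-boundary parabolic regularity underlying the final $C^1$ bootstrap is a secondary but still delicate technical point.
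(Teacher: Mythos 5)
Your first step takes a genuinely different route from the paper. You run a Polyak--{\L}ojasiewicz argument on the energy gap $\mathcal E(t)=\mathcal F[u(t)]-\mathcal F[U_a]$, whereas the paper never differentiates $\mathcal F$ along the flow: it sets $h=u-U_a$, writes $h_t=L_{U_a}h+F(x,h,\dot h)$, and differentiates the squared linearized distance $\gamma(t)=\int_0^1 \frac{h^2}{x^{2-q}\dot U_a^{\,q}}$ directly, applying the Hardy inequality to the term $\int \frac{h\,L_{U_a}h}{x^{2-q}\dot U_a^{\,q}}=-\int(\frac{\dot h^2}{\dot U_a^{\,q}}-\frac{h^2}{x^{2-q}})$ and absorbing the remainder $F$ by sacrificing an $\epsilon$ of the gap. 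Your PL inequality $\|\nabla\mathcal F[u]\|_g^2\ge 2\mu\,\mathcal E$ is heuristically consistent with the same spectral gap $(\lambda_1-1)\dot U_a(1)^q$, but it is strictly harder to make rigorous: it requires a lower bound on $\|L_{U_a}h\|^2$ (a second-order quantity in $h$, needing $h$ in the domain of the degenerate operator and control of $\ddot h$ near $x=0$) via the spectral theorem for $-L_{U_a}$, plus a uniform second-order expansion of both $\mathcal F$ and $\nabla\mathcal F$. The paper's choice of $\gamma$ only ever tests the Hardy inequality against $h$ itself, which is why it goes through with the regularity actually available. Your route would, if completed, yield the stronger $H$-norm decay, but it buys nothing for the final statement.

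The genuine gap is in your last step. You propose to pass from decay of the weighted $L^2$ (or $H^1$) norm to decay of $\|u(t)-U_a\|_{C^1}$ by \emph{interpolating} against a uniformly bounded higher (H\"older) norm, "conceding an arbitrarily small factor in the exponent". Interpolation of $\|h_x\|_{L^\infty}$ between $\|h_x\|_{L^2}\le Ce^{-\mu t}$ and a uniform $C^{0,\alpha}$ bound gives $\|h_x\|_{L^\infty}\le C e^{-\theta\mu t}$ with $\theta=\frac{2\alpha}{2\alpha+1}<1$ \emph{fixed} by the available H\"older exponent (here $\alpha<1/N$, so $\theta\le \frac{2}{N+2}$); this loses a definite fraction of the rate, not an arbitrarily small one, and therefore cannot reach every $\lambda<\lambda_1-1$ as the theorem claims. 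This is precisely the limitation of the earlier interpolation arguments the paper is improving upon. What is actually needed, and what the paper proves (Lemma \ref{lem_estimation_C1_L2}), is a genuine smoothing estimate $\|u(t_0+t)-U_a\|_{C^1([0,1])}\le C t^{-\beta}\|u(t_0)-U_a\|_L$ over a fixed time slab, obtained by transplanting the equation for $h$ to a semilinear heat equation on the unit ball of $\mathbb R^{N+2}$ and bootstrapping $L^2\to W^{1,p}\to C^1$ through the Dirichlet heat semigroup; applied at time $t-T$, this preserves the full exponential rate at the cost of a constant. Without that estimate (or an equivalent one), your argument proves a weaker theorem.
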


\begin{rmq} We recall that the derivative of u is, up to a multiplicative constant and a change of variables, the  radial part of the cell density $\rho$ in the original Patlak-Keller-Segel model $(PKS_q)$.\\
Hence, this result is  equivalent to the exponential speed of uniform convergence of $\rho(t)$
toward $\rho_a$ where  $\rho_a$ is the cell density corresponding 
to~$U_a$.
\end{rmq}

The proof of Theorem \ref{thm_convergence_expo_C1} consists of two steps: 
\begin{itemize}
 \item We first establish exponential convergence in an appropriate weighted $L^2$ norm, 
by means of a linearization procedure and a suitable Hardy type inequality.
\item We then deduce exponential $C^1$ convergence by using a smoothing effect
after a suitable tranformation of the equation.
\end{itemize}

In the next subsection, we describe the first step of the proof.

\subsection{A Hardy type inequality and exponential convergence in a $L^2$ weighted space}

The following result, which is a Hardy type inequality, 
requires as a natural framework the  two
Hilbert spaces $L\supset Y_m$ and $H$, where 
$$\boxed{L=L^2\left((0,1),\frac{dx}{x^{2-q}}\right)}$$ is equipped 
with the norm  $$\|h\|_L=\sqrt{\int_0^1 \frac{h^2}{x^{2-q}}}$$
and   
$$\boxed{H=\left\{h\in L, \int_0^1 \dot{h}^2 <\infty,\,
 h(0)=h(1)=0\right\}}$$  with the norm  
$$\|h\|_H=\sqrt{\int_0^1 \frac{h^2}{x^{2-q}}+\int_0^1 \dot{h}^2}.$$ Note that,
 actually, $H=H^1_0\subset C^{\frac{1}{2}}([0,1])$ and the norms on $H$ and $H^1_0$ are equivalent (see Remark \ref{H=H10}).
 
\begin{rmq} 
 It is very natural to introduce $L$  from the viewpoint of 
the evolution equation $(PDE_m)$. 
 This will be justified in the following heuristics subsection.
\end{rmq}

\begin{propo}
\label{prop_existence_lambda1} Let $a\in(0,A)$.\\
There exists $\lambda_1=\lambda_1(a)>1$ such that for all $h\in H$,
\begin{equation}
\label{inegalite_lambda_1}
\int_0^1 \frac{\dot{h}^2}{\dot{U_a}^q} \geq \lambda_1 \int_0^1 \frac{h^2}{x^{2-q}}.
\end{equation}
Moreover, there exists $\phi_1\in H$ such that there is equality if and only if $h=c\,\phi_1$ for some $c\in \mathbb{R}$.
\end{propo}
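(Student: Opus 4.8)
The plan is to characterise $\lambda_1$ variationally as the bottom of a Rayleigh quotient and to produce the extremal situation explicitly from the dilation invariance of the family $(U_a)$. Set
$$R[h]=\frac{\D\int_0^1 \dot{h}^2/\dot{U_a}^q}{\D\int_0^1 h^2/x^{2-q}},\qquad h\in H\setminus\{0\},$$
and define $\lambda_1=\inf_{h\neq 0}R[h]$. Since $a\in(0,A)$, one has $\dot{U_a}>0$ on $[0,1]$, so $1/\dot{U_a}^q$ is bounded above and below by positive constants; hence the numerator is equivalent to the $H^1_0$-seminorm and $R$ is coercive. First I would show that $\lambda_1$ is attained. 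Normalising $\|h_n\|_L=1$ along a minimising sequence bounds $h_n$ in $H=H^1_0$, so $h_n\rightharpoonup\phi_1$ weakly; the numerator is weakly lower semicontinuous, and the embedding $H\hookrightarrow L$ is compact (splitting $(0,1)=(0,\delta)\cup(\delta,1)$: on $(\delta,1)$ the weight is bounded and Rellich applies, while $\int_0^\delta h^2/x^{2-q}\leq \delta^q q^{-1}\int_0^\delta\dot h^2$ makes the singular part uniformly small). Thus $\|\phi_1\|_L=1$ and $R[\phi_1]=\lambda_1$, so a minimiser $\phi_1\in H$ exists and solves the Euler--Lagrange equation $-(\dot{\phi_1}/\dot{U_a}^q)'=\lambda_1\,\phi_1/x^{2-q}$.

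The heart of the argument is the identification of the threshold $\lambda=1$ with a positive solution coming from the scaling $U_a(x)=U_1(ax)$. Differentiating the stationary equation $x^{2-q}\ddot{U_a}+U_a\dot{U_a}^q=0$ with respect to $a$ shows that $w:=\partial_a U_a$ (proportional to $x\,\dot{U_a}$) solves the linearised stationary equation, and a direct computation using $\ddot{U_a}=-U_a\dot{U_a}^q/x^{2-q}$ turns this into
$$-\Big(\dot{w}/\dot{U_a}^q\Big)'=w/x^{2-q}\qquad\text{on }(0,1),$$
i.e. $w$ is an eigenfunction at the value $\lambda=1$. Crucially $w=x\dot{U_a}>0$ on $(0,1]$, with $w(0)=0$ but $w(1)=\dot{U_a}(1)>0$, so $w\notin H$. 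I would then perform the ground-state substitution $h=wv$: integrating by parts and inserting the equation for $w$ gives the Picone-type identity
$$\int_0^1\frac{\dot{h}^2}{\dot{U_a}^q}-\int_0^1\frac{h^2}{x^{2-q}}=\int_0^1\frac{w^2\dot{v}^2}{\dot{U_a}^q}\geq 0 .$$
The boundary terms vanish: at $x=1$ because $h(1)=0$, and at $x=0$ because $|h(x)|^2\leq x\int_0^x\dot h^2=o(x)$ while $w\sim a x$. This already gives $R[h]\geq 1$ for every $h\in H$, and equality forces $\dot v\equiv 0$, hence $h=cw$; but $h(1)=0\neq w(1)$ imposes $c=0$. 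So $R[h]>1$ for all $h\neq 0$.

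Combining the two steps finishes the proof: the minimiser $\phi_1\neq 0$ satisfies $\lambda_1=R[\phi_1]>1$, which is the asserted strict inequality. For the equality statement I would first note that $|\phi_1|$ is again a minimiser, so $\phi_1$ may be taken nonnegative, and then positive on $(0,1)$ by the uniqueness theory for the second-order equation it satisfies. The equality case in (\ref{inegalite_lambda_1}) is then the simplicity of $\lambda_1$: applying the same substitution $h=\phi_1 v$ with the equation $-(\dot{\phi_1}/\dot{U_a}^q)'=\lambda_1\phi_1/x^{2-q}$ yields $\int_0^1\dot h^2/\dot{U_a}^q-\lambda_1\int_0^1 h^2/x^{2-q}=\int_0^1\phi_1^2\dot v^2/\dot{U_a}^q$, so any extremal must have $v$ constant, i.e. $h=c\phi_1$.

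I expect the main obstacle to be the rigorous handling of the degenerate and singular endpoint $x=0$: justifying the integration by parts in the Picone identity (where $1/\dot{U_a}^q$ stays bounded but $1/x^{2-q}$ and $1/w$ blow up), controlling $h/w$ near $0$, and proving the compact embedding $H\hookrightarrow L$ despite the non-integrable weight $1/x^{2-q}$. I would deal with this by an approximation argument, cutting off away from $0$ and $1$ and using $H=H^1_0\subset C^{1/2}$ together with the bound $h(x)^2=o(x)$, before passing to the limit.
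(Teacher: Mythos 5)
Your proposal is correct and follows essentially the same route as the paper: the same function $w_a=\partial_a U_a\propto x\,\dot{U_a}$ solving the linearised equation at $\lambda=1$, the same factorisation (Picone/Beesack) identity turning $\int_0^1 \dot h^2/\dot{U_a}^q-h^2/x^{2-q}$ into a manifestly nonnegative integral, and the same use of $h(1)=0$ versus $w_a(1)>0$ to exclude equality. The only differences are cosmetic: you obtain the minimiser by the direct method and check the endpoint behaviour at $x=0$ explicitly, where the paper uses Lax--Milgram plus compact self-adjoint operator theory and a density argument from $C_c^\infty((0,1))$.
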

As will be explained with much more details in  subsection
 \ref{heuristics},
 the evolution 
problem 
$(PDE_m)$ can formally be seen as a gradient flow equation
$$u_t=-\nabla \mathcal{F}[u(t)] $$ on some 
``infinite dimensional Riemannian manifold'' $(\mathcal{M},g)$
where 
$$\mathcal{M}=\{ u\in Y_m^1, \;\dot{u}>0
 \text{ on } [0,1]\}$$
is an open set of the affine space 
$$ Y_m^1=Y_m\cap C^1([0,1])$$
and the metric $g$ is defined 
by 
$$g_{u}(h,h)=\int_0^1 \frac{h^2}{x^{2-q}\dot{u}^q}$$
for all $u\in \mathcal{M}$ and 
$h\in T_u\mathcal{M}$, $T_u\mathcal{M}$ denoting  the tangent space to $\mathcal{M}$ at 
$u$.\\
 The previous
 result is actually equivalent to the strict convexity of 
the Lyapunov functional $\mathcal{F}$
at $U_a$, which makes us expect an exponential speed 
of convergence toward $U_a$, measured with the Riemannian distance $d_{\mathcal{M}}(U_a,\cdot)$ defined by 
the metric $g$ (which is equivalent to $\|\cdot\|_L$ near $U_a$). \\
Its proof  relies on the theory of compact self-adjoint operators 
on a separable Hilbert space and on a technique used in the article 
\cite{Beesack}  of P.R Beesack about extensions of Hardy's inequality.\\
We enjoy the opportunity to thank Philippe Souplet for suggesting this 
reading. \\

The following result shows rigorously the expected exponential speed of convergence in $L$ :
\begin{lem}
\label{lem_convergence_expo_L2}
Under the assumptions of Theorem \ref{thm_convergence_expo_C1},  there exists $C=C(u_0,\lambda)>0$ such that 
$$\|u(t)-U_a\|_L\leq C \exp(-\lambda\dot{U_a}(1)^q\;t)$$
for all $t\geq 0$.
\end{lem}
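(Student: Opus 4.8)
The plan is to prove the lemma via a Lyapunov-type energy estimate adapted to the Riemannian metric $g$, extracting a spectral gap from the Hardy inequality of Proposition \ref{prop_existence_lambda1}. First I would set $w(t,x)=u(t,x)-U_a(x)$, so that $w(t,0)=w(t,1)=0$ and, by the classical regularity of $u$, $w(t)\in H$ for every $t>0$. Since the $C^1$ convergence $u(t)\to U_a$ is already known, I can fix $t_0$ such that $\|w(t)\|_{C^1([0,1])}\le\delta$ for $t\ge t_0$, with $\delta$ small; in particular $u_x=\dot{U_a}+w_x$ then stays in a fixed compact subset of $(0,\infty)$, which is exactly what lets us safely Taylor-expand the non-Lipschitz nonlinearity $s\mapsto s^q$ when $N\ge 3$.

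Next I would introduce the frozen-metric energy
\[
E(t)=\frac12\int_0^1\frac{w^2}{x^{2-q}\,\dot{U_a}^q}\,dx,
\]
which is comparable to $\tfrac12\|w(t)\|_L^2$ because $\dot{U_a}$ is bounded between $\dot{U_a}(1)>0$ and $\dot{U_a}(0)$. Differentiating in time and substituting $w_t=x^{2-q}w_{xx}+(U_a+w)(\dot{U_a}+w_x)^q-U_a\dot{U_a}^q$, I would integrate the diffusion term by parts (the boundary terms vanish since $w$ vanishes at the endpoints and $\dot{U_a}(0)>0$) and expand the nonlinearity to first order. The decisive algebraic point is that the two first-order cross terms, one produced by the diffusion and one by the nonlinearity, cancel \emph{exactly} by virtue of the stationary identity $x^{2-q}\dot{U_a}'=-U_a\dot{U_a}^q$. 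This yields
\[
E'(t)=-\int_0^1\frac{\dot{w}^2}{\dot{U_a}^q}\,dx+\int_0^1\frac{w^2}{x^{2-q}}\,dx+\mathcal{R}(t),
\]
where $\mathcal{R}$ collects the quadratic-and-higher remainder of the nonlinearity and satisfies $|\mathcal{R}(t)|\le C\,\|w(t)\|_{C^1}\big(\int_0^1 w^2 x^{q-2}\,dx+\int_0^1 \dot{w}^2\dot{U_a}^{-q}\,dx\big)$.

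To close the estimate I would keep a small fraction $\eta$ of the good gradient term and apply Proposition \ref{prop_existence_lambda1} to the remainder of it, so that for $t\ge t_0$ the term $\mathcal{R}$ is absorbed and
\[
E'(t)\le-\big[(\lambda_1-1)-\eta\big]\int_0^1\frac{w^2}{x^{2-q}}\,dx.
\]
Finally, since $\dot{U_a}'\le 0$ forces $\dot{U_a}$ to attain its minimum $\dot{U_a}(1)$ at $x=1$, one has $\int_0^1 w^2x^{q-2}\,dx\ge\dot{U_a}(1)^q\,2E(t)$, turning the bound into $E'(t)\le-2[(\lambda_1-1)-\eta]\,\dot{U_a}(1)^q\,E(t)$. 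Gronwall's lemma together with the comparability of $E$ and $\|w\|_L^2$ then gives exponential decay at rate $[(\lambda_1-1)-\eta]\,\dot{U_a}(1)^q$ for $t\ge t_0$; choosing $\eta$ with $(\lambda_1-1)-\eta>\lambda$ (possible because $\lambda<\lambda_1-1$) and enlarging $C$ to absorb the bounded interval $[0,t_0]$ yields the claim for all $t\ge0$. The main obstacle I anticipate is the rigorous control of $\mathcal{R}$: one must verify the weighted integrals converge near the degenerate endpoint $x=0$ (where $w=O(x^{1/2})$ from $H\hookrightarrow C^{1/2}$ offsets the singular weight $x^{q-2}$) and that the uniform $C^1$-smallness genuinely tames the non-Lipschitz factor $(\dot{U_a}+w_x)^q$; the $-\eta$ loss in the rate is precisely the price of absorbing this nonlinear remainder, which explains the restriction $\lambda<\lambda_1-1$.
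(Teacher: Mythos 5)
Your proposal is correct and follows essentially the same route as the paper: the frozen-metric energy $E(t)$ is exactly the quantity $\gamma(t)=g_{U_a}(h,h)$ used there, the exact cancellation via the stationary identity is the paper's Lemma \ref{lem_integrale_Lh_h} giving $\int_0^1 \frac{h\,L_{U_a}h}{x^{2-q}\dot{U_a}^q}=-\int_0^1\bigl[\frac{\dot h^2}{\dot{U_a}^q}-\frac{h^2}{x^{2-q}}\bigr]$, and the retention of a small fraction $\eta$ of the gradient term to absorb the nonlinear remainder (using the $C^1$-smallness for $t\ge t_0$), followed by the concavity bound $\int_0^1\frac{h^2}{x^{2-q}}\ge\dot{U_a}(1)^q\,\gamma(t)$, Gronwall, and enlargement of $C$ on $[0,t_0]$, is precisely the paper's $\epsilon$-splitting argument. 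The integrability and boundary-term issues you flag near $x=0$ are exactly the ones the paper handles in Lemma \ref{lem_integrale_Lh_h}.
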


This result, though not the strongest, is the core of our paper.
 Its proof is inspired by both the gradient
 flow structure of problem $(PDE_m)$   and 
 the fact that $\mathcal{M}$ is  an open set of 
an affine space, which allows us to consider all the 
situation from the viewpoint of $U_a$. More precisely, 
if we define $$h(t)=u(t)-U_a$$ and consider
 $$\gamma(t)=g_{U_a}(h(t),h(t)),$$ we want to
 get a differential inequality 
on the latter. Since $h$ satisfies 
\begin{equation}
\label{intro_equation_h}
h_t=L_{U_a}h+F(x,h,\dot{h})
\end{equation} 
where $$L_{U_a}=x^{2-q} \dot{U_a}^q \; \frac{d}{dx}\left[\frac{\dot{h}}{\dot{U_a}^q}\right]+\dot{U_a}^q \;h$$ is the linearized operator at $u=U_a$ and $F$ is some remainder term, 
we will have two parts to deal with in the derivative 
of $\gamma$. The first term can be managed thanks 
to the Hardy type inequality in Proposition
 \ref{prop_existence_lambda1}. The second  
 imposes to sacrifice a bit of the first one, but 
without any serious  damage since there was anyhow 
no hope to reach the limit case $\lambda=\lambda_1$, 
at least by this way. 

\begin{rmq}\quad
\begin{itemize}
\item[i)] We can show that the degenerate parabolic equation (\ref{intro_equation_h}) satisfied by $h$ 
is regularizing in time from $L$ to $C^1([0,1])$, at least for large time (see Lemma \ref{lem_estimation_C1_L2}).
This will be enough to deduce the exponential speed of 
convergence toward steady states in $C^1([0,1])$, i.e. Theorem \ref{thm_convergence_expo_C1}, as an easy consequence of Lemma \ref{lem_convergence_expo_L2}.

\item[ii)]The constant $C$ we get is unbounded as $\lambda \longrightarrow \lambda_1$ so that we cannot get the same result with $\lambda=\lambda_1-1$.
\item[iii)] We think that the upper rate $\lambda_1 \dot{U_a}(1)^q$ is not optimal. We believe $\lambda_1$ is but not $\dot{U_a}(1)^q$ because it follows from the following rough inequality~: $\int_0^1\frac{h^2}{x^{2-q}}\geq \dot{U_a}(1)^q \int_0^1 \frac{h^2}{x^{2-q}\dot{U_a}^q}$ for any $h\in L$ 
because $U_a$ is concave.
\item[iv)] 
In dimension $N\geq 3$, an interesting question is to know whether  
 the exponential
 speed of convergence  degenerates or not
for $a=A$. Indeed, we can see that $\lambda_1(A)=1$ 
since if we set $w_A=\left.\frac{d}{da}\right|_{a=A}U_a$, 
we remark that $-\frac{d}{dx}\frac{\dot{w_A}}{\dot{U_A}^q}=
\frac{w_A}{x^{2-q}}$. Hence we can guess that
 $\lambda_1(a)\rightarrow 1$ as $a\rightarrow A$. But, since the center manifold seems to be made of the steady states $(U_a)_{a\geq A}$, it is not clear that the exponential speed of convergence should disappear. \\
It would then be very different for the critical mass for $N=2$ and $q=1$ since the speed of convergence degenerates and is no longer exponential. This has been done in \cite{Souplet-Nikos}. It was known that infinite
time blow-up of $u_x$ occurs. Of course, uniform convergence toward the constant 
singular steady state $\overline{U}=2$ cannot hold
in this case since $u(t,0)=0$. However, the authors proved that
$|u(t)-2|_1 \sim C \sqrt{t} e^{-\sqrt{2t}}$ as $t \to \infty$.
\end{itemize}
\end{rmq}

\subsection{Heuristics}
\label{heuristics}

Although the proof of Lemma \ref{lem_convergence_expo_L2} (cf. sections 3-4) can be read without any reference to the following heuristic arguments, we think that they shed some light on the underlying ideas and on the intuition that led to the rigorous proof. 
Indeed, the latter is inspired by a gradient flow approach, in the spirit of the seminal work of F. Otto \cite{Otto}, a strategy which has already been used successfully for the Patlak-Keller-Segel model. For instance, applying these ideas to  system $(PKS_1)$ in $\mathbb{R}^2$ for the subcritical mass case, A. Blanchet, V. Calvez and J.A. Carrillo recovered in \cite{BCC} the global in time existence of weak solutions  and  V. Calvez and J.A. Carrillo proved in \cite{Calvez-Carrillo}  the exponential speed of convergence of radial solutions  toward equilibrium, but measured with the Wasserstein distance $\mathcal{W}_2$. \\

First, we would like  to recall a basic fact about gradient flows
in a Euclidean space which provides a sufficient condition to have
a exponential speed of convergence to the stationary point.
We will give its rigorous proof, even though  it
 is very
simple, because it is the scheme for proofs 
in a more general infinite dimensional setting, as we will then see 
 on a
well-known instance in an infinite dimensional Hilbert space.   
Finally, 
we will see, without searching to be rigorous,  
that these ideas are inspiring in the case of 
problem $(PDE_m)$ which turns out to define a 
gradient flow on an "infinite dimensional Riemannian manifold".\\

For basic knowledge about strict 
Lyapunov functional and Lasalle's invariance principle,
we refer the reader to \cite[Chapter 9]{Haraux} or to 
\cite[Appendix G]{QS}. We also  recall some useful properties
 in subsection \ref{Lyapunov_rappels}.\\

We consider the following 
differential equation in the Euclidean space 
($\mathbb{R}^n,\langle \cdot,\cdot\rangle)$ 
having a gradient flow structure, i.e. 
$$\dot{x}(t)=- \nabla F(x(t))$$ 
with $F:\mathbb{R}^n\rightarrow \mathbb{R}$ smooth. 
\begin{lem} \label{lem_cv_exp} Let $x_0\in  \mathbb{R}^d$.\\
If the trajectory starting from $x_0$ is relatively compact in 
$\mathbb{R}^d$ (then global), 
if there exists a unique stationary 
point $x_\infty$ and if $F$ is strictly convex at $x=x_\infty$, i.e. 
F satisfies for some $\alpha_1>0$, 
$$d^2F(x_\infty)(\dot{x},\dot{x})\geq 
\alpha_1 |\dot{x}|^2 \text{ for all }\dot{x}\in \mathbb{R}^n,$$

then for any $\alpha\in(0,\alpha_1)$, there exists 
$C=C(x_0,\alpha)>0$ such that for all $t\geq 0$
$$|x(t)-x_\infty|\leq C \exp(-\alpha\;t).$$
\end{lem}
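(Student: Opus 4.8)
The plan is to combine a qualitative convergence result, coming from the Lyapunov structure, with a quantitative differential inequality for the squared distance to $x_\infty$, closed using the strict convexity hypothesis. The two steps are essentially independent: first one shows $x(t)\to x_\infty$, then one bootstraps this into an exponential rate.

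First I would check that $F$ is a strict Lyapunov functional for the flow. Along any solution,
$$\frac{d}{dt}F(x(t))=\langle \nabla F(x(t)),\dot{x}(t)\rangle=-|\nabla F(x(t))|^2\leq 0,$$
with equality exactly at the critical points of $F$, that is, at the stationary points of the flow. Since the trajectory issued from $x_0$ is relatively compact (hence global, with nonempty $\omega$-limit set), LaSalle's invariance principle applies: the $\omega$-limit set is contained in the set of stationary points, which by hypothesis is the singleton $\{x_\infty\}$. Therefore $x(t)\to x_\infty$ as $t\to+\infty$.

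Next I would quantify this convergence. Set $\gamma(t)=\tfrac12\,|x(t)-x_\infty|^2$, so that, using $\nabla F(x_\infty)=0$,
$$\dot{\gamma}(t)=\langle x(t)-x_\infty,\dot{x}(t)\rangle=-\langle x(t)-x_\infty,\nabla F(x(t))-\nabla F(x_\infty)\rangle.$$
A first-order Taylor expansion of $\nabla F$ at $x_\infty$ gives $\nabla F(x(t))-\nabla F(x_\infty)=D^2F(x_\infty)(x(t)-x_\infty)+R(t)$ with $|R(t)|=o(|x(t)-x_\infty|)$ as $x(t)\to x_\infty$, where $D^2F(x_\infty)$ is the Hessian and $\langle D^2F(x_\infty)h,h\rangle=d^2F(x_\infty)(h,h)$. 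Substituting and invoking the strict convexity hypothesis $d^2F(x_\infty)(h,h)\geq \alpha_1|h|^2$ yields
$$\dot{\gamma}(t)\leq -\alpha_1\,|x(t)-x_\infty|^2+o(|x(t)-x_\infty|^2)=-2\alpha_1\gamma(t)+o(\gamma(t)).$$
Given $\alpha\in(0,\alpha_1)$, the convergence established above lets me pick $t_0\geq 0$ so that the remainder is dominated for $t\geq t_0$ and $\dot{\gamma}(t)\leq -2\alpha\,\gamma(t)$ there. Gronwall's lemma then gives $\gamma(t)\leq \gamma(t_0)\,e^{-2\alpha(t-t_0)}$, i.e. $|x(t)-x_\infty|\leq C'e^{-\alpha t}$ for $t\geq t_0$. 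On the compact interval $[0,t_0]$ the solution stays bounded by relative compactness, so enlarging the constant yields the claimed estimate $|x(t)-x_\infty|\leq C\,e^{-\alpha t}$ for all $t\geq 0$, with $C=C(x_0,\alpha)$.

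The only delicate point is that strict convexity is assumed only at the single point $x_\infty$, so the coercive bound $d^2F(x_\infty)(h,h)\geq\alpha_1|h|^2$ is available only infinitesimally near $x_\infty$; the nonlinear remainder $R(t)$ must therefore be absorbed, which is possible precisely because the first step guarantees $x(t)\to x_\infty$. This absorption forces one to sacrifice an arbitrarily small part of the rate, explaining why the statement gives $\alpha<\alpha_1$ rather than $\alpha=\alpha_1$. This is exactly the scheme that will be transposed, with $F$ replaced by the Lyapunov functional $\mathcal{F}$, the Euclidean Hessian replaced by the Hardy type inequality of Proposition \ref{prop_existence_lambda1}, and $|\cdot|$ replaced by $\|\cdot\|_L$, in the proof of Lemma \ref{lem_convergence_expo_L2}.
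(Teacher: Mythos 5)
Your proposal is correct and follows essentially the same scheme as the paper's proof: LaSalle's invariance principle to get qualitative convergence to the unique stationary point, then a differential inequality for the squared distance $\gamma(t)$ obtained by Taylor-expanding $\nabla F$ at $x_\infty$, absorbing the $o(\gamma)$ remainder for $t\geq t_0$ thanks to the convergence already established, applying Gronwall, and enlarging the constant on $[0,t_0]$ by boundedness. The only differences are cosmetic (a factor $\tfrac12$ in the definition of $\gamma$).
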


\begin{proof}[Proof of Lemma \ref{lem_cv_exp}]

First, we observe that F is a strict Lyapunov function since
$$ \frac{d}{dt}F(x(t))=-|\nabla F(x(t))|^2.$$
Since the trajectory $(x(t))_{t\geq 0}$ starting from $x_0$ is
 relatively compact, i.e. bounded in the context of an Euclidean 
space, then from  Lasalle's invariance principle, 
the $\omega$-limit set is made of stationary points. 
But since there is only one stationary point $x_\infty$, i.e.
verifying
$$\nabla F(x_\infty)=0,$$  
we can deduce the convergence of $x(t)$ toward $x_\infty$.

This implies in particular that
 $x_\infty$ is the minimum of $F$,
 so that moreover $$d^2F(x_\infty)\geq 0.$$ 
It is then not surprising that the strict convexity 
assumption on F will give  information about the speed 
of convergence of $x(t)$ toward $x_\infty$.
Indeed, if we  denote $$h(t)=x(t)-x_\infty$$ and $$\gamma(t)=|h(t)|^2,$$ 
we have
$$\dot{\gamma}(t)=-2 \langle \nabla F(x(t)),h(t) \rangle.$$
But $\nabla F(x_\infty)=0$, so $$\nabla F(x(t))=d(\nabla F)(x_\infty).h(t)+\epsilon(h(t))h(t)$$ where $\epsilon(h)\underset{h\rightarrow 0}{\longrightarrow}0$. Hence,
$$\dot{\gamma}(t)=-2d^2F(x_\infty).(h(t),h(t))+\epsilon(h(t))|h(t)|^2$$
 Now, let $\alpha<\alpha_1$.\\
  Since $h(t)\underset{h\rightarrow 0}{\longrightarrow}0$, there exists $t_0>0$ such that for all $t\geq t_0$, $\epsilon(h(t))\leq 2(\alpha_1-\alpha)$. Then, for all $t\geq t_0$,
 $$\dot{\gamma}(t)\leq -2 \alpha \;\gamma(t),$$
 which implies $$\gamma(t)\leq \gamma(t_0)\exp(-2\alpha\,t)$$ for all $t\geq t_0$ and finally we have  for all $t\geq 0$,
 $$\gamma(t)\leq C\exp(-2\alpha\,t)$$
 where $C=C(x_0,\alpha)$ because $\gamma$ is bounded. Whence the result.
\end{proof}

\medskip

As said before, this scheme can also be used in an infinite dimensional setting, like a Hilbert space. For example, let us consider the heat 
equation with Dirichlet condition on an bounded domain $\Omega$
$$ u_t=\Delta u.$$ 
This equation defines a continuous dynamical system on $L^2(\Omega)$ endowed with its standard scalar product $(\cdot\,,\cdot)$. and is moreover  regularizing so that,  for $t>0$, $u(t)\in H^1_0(\Omega)$. If we define $$F(u)=\int_\Omega \frac{|\nabla u|^2}{2},$$
then for $t>0$, $$u_t=-\nabla F(u(t))$$
 since $$(\nabla F(u),h)=dF(u).h=\int_\Omega \nabla u \nabla h=-\int_\Omega \Delta u\;h=(-u_t,h)$$ for all $h\in H^1_0(\Omega)$.\\
 It is easy to see that $F$ is a strict Lyapunov function and that 0  is the only stationary solution since the only harmonic function in $\Omega$ vanishing on the boundary is the zero function. \\
 Moreover, since F is quadratic, 
 $$d^2F(u).(h,h)=2 F(h)=\int_\Omega |\nabla h|^2\geq \lambda_1(\Omega) \|h\|_{L^2(\Omega)}^2$$ by Poincar\'e inequality, where $\lambda_1(\Omega)$ is the first eigenvalue of $-\Delta$ in $\Omega$ with Dirichlet condition. \\
 The same computation
 as above shows that for any $u_0\in L^2(\Omega)$, for any $\lambda<\lambda_1(\Omega)$, there exists $C=C(u_0,\lambda)>0$ such that  for all $t\geq 0$,
 $$\|u(t)\|_{L^2(\Omega)}\leq C\exp(-\lambda\,t).$$
 Note that, actually, the proof also works 
for $\lambda=\lambda_1$ in this particular instance 
 because $F$ is quadratic so that $u\mapsto\nabla F(u)$ is linear
hence there is no $o(h)$ to deal with.

\medskip

Another more general setting where this method can be applied is that of "infinite dimensional Riemannian manifolds". This idea has been deeply exploited in
the very nice paper \cite{Otto} concerning the porous medium equation.\\
It turns out that problem $(PDE_m)$ has this kind of gradient flow structure and
 we will try to take advantage of it. In what follows,
we will consider the case of dimension $N\geq 3$
 but all this discussion can be made for the case $N=2$.\\

If we denote the "infinite dimensional 
manifold" (actually an open set of the affine space $Y_m^1$)
$$\boxed{\mathcal{M}=\{ u\in Y^1_m, \;\dot{u}>0 \text{ on } [0,1]\}}$$ where 
we recall that $$Y_m^1=Y_m\cap C^1([0,1]) ,$$ 
we know that for $t>0$, $u(t)\in Y_m^1$ and then for t large enough, $$u(t)\in \mathcal{M}$$ since 
$u(t) \underset{t \rightarrow + \infty}{ \longrightarrow } 
U_a $ in $ C^1([0,1])$ and $\dot{U_a}>0$ on $[0,1].$\\

We can 
define the "Riemannian metric" g on $\mathcal{M}$ by
\begin{equation}
\label{formule_g_u}
\boxed{g_{u}(h,k)=\int_0^1 \frac{hk}{x^{2-q}\dot{u}^q}}
\end{equation}
for any $u\in \mathcal{M}$ and any $(h,k)\in T_u\mathcal{M}^2$, 
where actually, for any $u\in \mathcal{M}$ $$T_u\mathcal{M}=\mathcal{T}$$
with
$$\mathcal{T}=\{h\in C^1([0,1]),\; h(0)=h(1)=0 \} $$ 
since $\mathcal{M}$ is an open set of the affine space 
$Y_m^1$ which has $\mathcal{T}$ as direction (actually, 
$Y_m^1=m \,Id_{[0,1]}+\mathcal{T}$).\\

Now, we recall  the strict Lyapunov functional $\mathcal{F}$  used in \cite{Montaru2} to prove convergence toward steady states in the critical and subcritical mass cases :
$$\boxed{\mathcal{F}[u]=\int_0^1 \frac{\dot{u}^{2-q}}{(2-q)(1-q)}-\frac{u^2}{2x^{2-q}}}\;. $$
$\mathcal{F}$ can be guessed by the following equivalent formulation of (\ref{equ_u_1})
\begin{equation}
\label{PDE_m_autre_formulation}
u_t=x^{2-q}\dot{u}^q\left[\frac{d}{dx}\frac{\dot{u}^{1-q}}{1-q}+\frac{u}{x^{2-q}}\right].
\end{equation}
It is easy to see formally that
$$ u_t=-\nabla \mathcal{F}[u(t)],$$
which explains intuitively why $\mathcal{F}$ is a strict Lyapunov functional for $(PDE_m)$.\\
Indeed, for any $h\in T_u\mathcal{M}$, we have by definition
$$g_u(\nabla\mathcal{F}[u],h)= d\mathcal{F}(u).h $$  and moreover, by formal computation and integration by parts, we get  $$d\mathcal{F}(u).h=\int_0^1 \frac{\dot{u}^{1-q}\dot{h}}{1-q}-\frac{u\,h}{x^{2-q}}=-\int_0^1 \left[\frac{\ddot{u}}{\dot{u}^q}+\frac{u}{x^{2-q}}\right]h=-g_u(u_t,h). $$
Since we study the subcritical mass case, there exists a unique steady state $U_a$ so we have to compute the second derivative of $\mathcal{F}$ at this point.
Formally, we get 
$$\boxed{d^2\mathcal{F}[U_a].(h,h)= \int_0^1 \frac{\dot{h}^2}{\dot{U_a}^q}-\frac{h^2}{x^{2-q}}}\; .$$
As explained before, since $u(t) \underset{t \rightarrow + \infty}{ \longrightarrow } 
U_a $ in $C^1([0,1])$ , then $U_a$ is the minimum of $\mathcal{F}$
so that we can naturally expect that, for any $h\in T_{U_a}\mathcal{M}$, $$d^2F(U_a).(h,h)\geq 0.$$
If we can prove the stronger result that for some $\alpha_1>0$, we have  for all $h\in T_{U_a}\mathcal{M}$, 
 $$d^2\mathcal{F}[U_a].(h,h)\geq \alpha_1\; g_{U_a}(h,h)$$
or equivalently that for some $\lambda_1>1$, for all $h\in T_{U_a}\mathcal{M}$,
\begin{equation}
\label{intro_lambda1}
\int_0^1 \frac{\dot{h}^2}{\dot{U_a}^q}\geq \lambda_1 \int_0^1\frac{h^2}{x^{2-q}} ,
\end{equation}
then we can hope to prove that the  speed of convergence is  exponential  as before. 

\begin{rmq}
\label{rmq_philippe}
We thank Philippe Souplet for pointing out the following intuitive explanation of the fact that $\lambda_1>1$ in the present context. Indeed, for the subcritical case, the steady states of (\ref{PDE_m_autre_formulation}) form an increasing family $(U_a)_{a\in (0,A)}$ of solutions of 
$$ \frac{d}{dx}f(\dot{u})+V(x)u=0$$
where f is the increasing function on $[0,+\infty)$ defined for all $v\geq 0$ by $$f(v)=\frac{v^{1-q}}{1-q}$$  and
$$V(x)=\frac{1}{x^{2-q}}>0.$$
Hence, for any $a\in (0,A)$, $w_a=\frac{d}{da}U_a>0$ and $w_a$ 
formally satisfies
$$ \frac{d}{dx}[f'(\dot{U_a})\dot{w_a}]+V(x)w_a=0.$$
If $\phi_1>0$ is an eigenvector for the first eigenvalue $\lambda_1$, i.e.
 satisfies
$$ \frac{d}{dx}[f'(\dot{U_a})\dot{\phi_1}]+\lambda_1\; V(x)\phi_1=0,$$
then it is easy to see by integration by parts that
$$(\lambda_1-1)\int_0^1 V\,w_a\,\phi_1=[f'(\dot{U_a})w_a \dot{\phi_1}]_0^1 >0$$
by Hopf maximum principle on the boundary. Therefore, $\lambda_1>1$.

\end{rmq}

But here, there is an additional difficulty since we have a "Riemannian structure". Indeed, the metric $g$ here depends on the point $u$, so that if we set $$\gamma_0(t)=g_{u(t)}(u(t)-U_a,u(t)-U_a)$$ and differentiate it, there will be an extra term. This strategy is in some sense very natural since it takes into account the gradient flow structure. Nevertheless, because of this extra term, we  preferred 
to also take advantage of the fact that $\mathcal{M}$ is an open set of an  affine space  by considering  $$\gamma(t)=g_{U_a}(u(t)-U_a,u(t)-U_a),$$ i.e. we fixed the point $U_a$ and consider the difference $u(t)-U_a$ belonging to the tangent space $T_{U_a}\mathcal{M}$. Hence, this strategy of linearization somehow uses both the gradient flow structure via the good relation between $g$ and the flow, and the "affine structure" because we can  fix $U_a$ and consider the situation from its viewpoint.\\

Finally, we also  remark that if $U\in \mathcal{M}$ is near of $U_a$, then all measures by the metrics  $g_u(h,h)$ are comparable to $$\|h\|_L=\sqrt{\int_0^1 \frac{h^2}{x^{2-q}}}.$$ Hence, recalling that the Riemannian metric $d_\mathcal{M}$ on $\mathcal{M}$ between $U_a$ and $U$
is defined by
$$d_\mathcal{M}(U_a, U)^2=\underset{\{u\in C^1([0,1],\mathcal{M}),\;u(0)=U_a,\,u(1)=U\}}{\inf} \int_0^1 g_{u(t)}(u_t,u_t)\, dt,$$
it is clear that $d_\mathcal{M}(U_a,U)$ is equivalent to 
$\|U-U_a\|_L $ for $U$ near of $U_a$.
This consideration naturally leads us to introduce the  
Hilbert space $L\supset Y_m$, where 
$$\boxed{L=L^2\left((0,1),\frac{dx}{x^{2-q}}\right)}.$$
It is also very natural to make the proof of the Hardy type inequality (\ref{intro_lambda1}) in a larger space than $T_{U_a}\mathcal{M}$, namely for all $h\in H$, 
 where  $H$ is  the  Hilbert space 
$$\boxed{H=\left\{h\in L, \int_0^1 \dot{h}^2 <\infty,\, 
h(0)=h(1)=0\right\}}$$  equipped with the norm 
 $$\|h\|_H=\sqrt{\int_0^1 \frac{h^2}{x^{2-q}}+\int_0^1 \dot{h}^2}.$$

\medskip

\textbf{Outline of the rest of the paper.} 
In section~2, we state some preliminary results for dimension $N=2$
which will be proved in the appendix.\\
The next sections are devoted to proofs.
In section~3, we will get the strict convexity of $\mathcal{F}$ (or $\mathcal{G}$ if $N=2$)
at $U_a$ by showing  its equivalent form expressed in the Hardy 
type inequality of  Proposition~\ref{prop_existence_lambda1}. \\
In section~4, we show Lemma~\ref{lem_convergence_expo_L2} which
 establishes the exponential speed of convergence toward 
the steady state  in~$L$. 
\\In section~5, we prove that the 
degenerate parabolic equation satisfied by $h=u-U_a$ is 
regularizing for large time from L to $C^1([0,1])$, i.e. 
Lemma \ref{lem_estimation_C1_L2} which therefore easily 
implies Theorem \ref{thm_convergence_expo_C1}. In the appendix, we also  recall some basic facts about continuous dynamical systems and Lyapunov functionals. \\

\section{Preliminary results for dimension $N=2$}
\label{N=2}
In this section, we focus on the most studied case of dimension 2, well-known for its critical mass $8\pi$ if we come back to the 
original Keller-Segel system (\ref{PKS}). Our aim is to state the results that lead us to Lemma \ref{N=2_lem_convergence_C1}, i.e. to
the $C^1$ convergence of $u(t)$ toward the unique steady state $U_a$ 
that we mentioned in the introduction.

We would like to remark that problem $(PDE_m)$ 
 is simpler
for $N=2$ (see (\ref{n2_equ_u_1})) than for $N\geq 3$ (see (\ref{equ_u_1})) since its nonlinearity is then locally Lipschitz
(even bilinear) in $(u,u_x)$. 
Accordingly, the convergence results for $N=2$, as well as the required wellposedness and regularity properties, can be proved by similar ideas as in \cite{Montaru1,Montaru2} which treat the case
$N\geq 3$. We point out that some of the wellposedness issues for $N=2$ have been addressed in \cite{Souplet-Nikos,BKLN}, but that they do not provide all the necessary properties that we need. Therefore, and also for the sake of completeness, we chose to give all the  proofs in Appendix, trying to be reasonably self-contained.

\subsection{Local wellposedness and regularity for problem $(PDE_m)$}

We first give a wellposedness and regularity theorem 
which requires the introduction of the following  "norm" 
$\mathcal{N}$ and some notation.
\begin{Def} For any real function $u$ defined on $(0,1]$, we set
$$\mathcal{N}[u]=\underset{x\in(0,1]}{\sup}\frac{u(x)}{x}.$$
\end{Def}

\begin{nota} Let $m\geq 0$ and $\gamma>0$. 
\begin{itemize}
\item $Y_m=\{u\in C([0;1]) \mbox{ nondecreasing},\; u'(0) \mbox{ exists, } u(0)=0,u(1)=m\}$.
\item $Y_m^1=Y_m\cap C^1([0,1])$.
\item  $Y_m^{1,\gamma}=\{u\in Y_m\cap C^1([0,1]), \;\underset{x\in (0,1]}{\sup}\; \frac{|u'(x)-u'(0)|}{x^\gamma}<\infty\}$.
\end{itemize}
\end{nota}

\begin{thm}
\label{thm_existence_u}
Let  $K>0$ and $u_0\in Y_m $ with $\mathcal{N}[u_0]\leq K$.
\begin{itemize}
\item[i)] There exists $T_{max}=T_{max}(u_0)>0$ and a unique maximal classical solution of $(PDE_m)$ 
with initial condition $u_0$, i.e.
$$u\in C([0,T_{max})\times [0,1])\bigcap
C^1((0,T_{max})\times [0,1])\bigcap C^{1,2}((0,T_{max})\times (0,1])$$
verifying (\ref{n2_equ_u_1})(\ref{n2_equ_u_2})(\ref{n2_equ_u_3})(\ref{n2_equ_u_4}) and $u(0)=u_0$.\\
Moreover, $u$ satisfies the following condition :
\begin{equation}
\label{maj_u_C1}
 \underset{t\in(0,T]}{\sup} \sqrt{t}\; \|u(t)\|_{C^1([0,1])}<\infty \mbox{ for any } T\in(0,T_{max}).
 \end{equation}
\item [ii)] There exists $\tau=\tau(K)>0$ such that $T_{max}\geq \tau$.
\item[iii)] Blow up alternative : 
$T_{max}=+\infty$ \quad or \quad  $\underset{t\rightarrow T_{max}}
{\lim}\mathcal{N}[u(t)]=+\infty $
\item [iv)]
$u\in C^\infty ( (0,T_{max})\times (0,1])$.
\item[v)] If $u_0\in Y_m^{1,\gamma}$ with $\frac{1}{2}<\gamma \leq 1$ 
then $u\in C([0,T_{max}),C^1([0,1]))$.
\item [vi)] For all $t\in(0,T_{max})$, $u(t)\in Y_m^{1,1}$.
\item[vii)] $u_x(t,x)>0$  for all $(t,x)\in (0,T_{max})\times [0,1]$.
\end{itemize}
\end{thm}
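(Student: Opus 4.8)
The plan is to treat $(PDE_m)$ for $N=2$ as a semilinear perturbation of the degenerate linear operator $Lu=x\,u_{xx}$, and to run a fixed-point scheme adapted to the degeneracy at $x=0$, following the strategy of \cite{Montaru1,Montaru2} for $N\geq3$ but exploiting that the nonlinearity $u\,u_x$ is here bilinear, hence locally Lipschitz. First I would reduce to homogeneous boundary data by writing $u=mx+v$ (or $u=U_a+v$), so that $v(t,0)=v(t,1)=0$ and $v$ solves an inhomogeneous equation with the same principal part $x\,v_{xx}$. The natural quantity to propagate is the ``norm'' $\mathcal{N}$: since $u(0)=0$ one has the pointwise bound $u(x)\leq\mathcal{N}[u]\,x$, which is precisely the control matched both to the weight $x$ in the diffusion and to the bilinear structure of $u\,u_x$ near the degenerate endpoint.

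The technical heart is a smoothing estimate for the linear semigroup $(e^{tL})_{t\geq0}$ generated by $L=x\,\partial_{xx}$ with Dirichlet conditions, namely a gain of one derivative with the parabolic rate $t^{-1/2}$, \emph{uniform up to} $x=0$, consistent with (\ref{maj_u_C1}). To obtain it I would use the substitution $s=\sqrt{x}$, which turns $L$ into the Bessel-type operator $\tfrac14(\partial_{ss}-\tfrac1s\partial_s)$, for which sharp heat-kernel and smoothing estimates are available. From these one reads off bounds of the form $\|e^{tL}f\|_{C^1([0,1])}\leq C\,t^{-1/2}\,\|f\|_{X}$, together with the companion $\mathcal{N}$-estimates, where $X$ is the space controlled by $\mathcal{N}$.

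With these linear estimates in hand, parts (i)--(iii) follow in the usual way. I would set up a contraction in a space of the form $\{u:\sup_{0<t\leq T}\sqrt t\,\|u(t)\|_{C^1([0,1])}<\infty,\ \sup_t\mathcal{N}[u(t)]\leq 2K\}$, estimating the bilinear term via $u\leq\mathcal{N}[u]\,x$ together with the $\sqrt t$-weighted bound on $u_x$; Banach's fixed point theorem then yields a unique solution on a time interval whose length depends only on $K$, which is exactly (ii). The blow-up alternative (iii) is the standard continuation argument: as long as $\mathcal{N}[u(t)]$ stays bounded the solution can be restarted, so a finite $T_{max}$ forces $\mathcal{N}[u(t)]\to\infty$. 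Interior smoothness (iv) comes from a bootstrap, since away from $x=0$ the equation is uniformly parabolic and interior Schauder estimates can be iterated. The monotonicity $u_x\geq0$ and its strict form (vii) follow from the maximum principle applied to $p=u_x$, which solves $p_t=x\,p_{xx}+(1+u)p_x+p^2$: as $0$ is a solution and the reaction $p^2\geq0$, comparison preserves $p\geq0$, and the strong maximum principle together with a Hopf-type argument adapted to the degenerate endpoint upgrades this to $p>0$ on $[0,1]$ for $t>0$.

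The parts I expect to be most delicate are the boundary-regularity statements (v) and (vi), i.e.\ continuity in time with values in $C^1([0,1])$ for data in $Y_m^{1,\gamma}$ with $\gamma>1/2$, and the propagation of the one-sided Lipschitz control $u(t)\in Y_m^{1,1}$. These require sharp estimates on $u_x-u_x(\cdot,0)$ \emph{up to} the degenerate boundary $x=0$, which is exactly where ordinary parabolic theory fails; here the Bessel/heat-kernel representation from the second step, or equivalently the weighted-space machinery of \cite{Montaru1,Montaru2}, must be pushed to control the modulus of continuity of the gradient at the origin. The threshold $\gamma>1/2$ is the natural one dictated by the $t^{-1/2}$ smoothing rate, and handling this borderline behaviour near $x=0$ is the main obstacle of the whole argument; the bilinearity of the nonlinearity is what prevents any loss from propagating in these estimates.
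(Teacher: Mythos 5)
Your overall architecture (a fixed point propagating the $\mathcal{N}$-control and a $\sqrt t$-weighted $C^1$ smoothing bound, then a maximum principle for $p=u_x$) matches the paper's, but the step you defer to ``available sharp heat-kernel estimates'' is precisely where the real work lies, and your substitution does not get you there. Setting $s=\sqrt x$ turns $x\,\partial_{xx}$ into $\tfrac14\bigl(\partial_{ss}-\tfrac1s\partial_s\bigr)$, a Bessel operator of \emph{negative} effective dimension ($d-1=-1$): there are no off-the-shelf Dirichlet heat-kernel or $C^1$-smoothing bounds for it up to $s=0$, and a ``Hopf-type argument adapted to the degenerate endpoint'' is not available because the original operator degenerates completely at $x=0$. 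The paper's device is to compose your substitution with the further conjugation $u=s^2w$, i.e.\ $w(y)=u(|y|^2)/|y|^2$ for $y$ in the unit ball $B\subset\mathbb{R}^4$; one checks $x\,u_{xx}=\tfrac x4\,\Delta w$, so $(PDE_m)$ becomes the honest semilinear heat equation $w_t=\Delta w+4w\bigl(w+\tfrac{y\cdot\nabla w}{2}\bigr)$ on $B$ with $\|w(t)\|_{\infty}=\mathcal N[u(4t)]$. Parts (i)--(iv) then follow from standard semigroup and fixed-point theory in $C(\overline B)$ with interpolation spaces, and the endpoint statements you flag as most delicate come essentially for free from radial symmetry: (vi) because $\tilde w_r(t,0)=0$ forces $|u_x(t,x)-u_x(t,0)|\le Kx$; (v) because $\theta_0$ maps $Y_m^{1,\gamma}$ into $C^1(\overline B)$ exactly when $\gamma>1/2$ (the gradient of $w_0$ behaves like $|y|^{2\gamma-1}$), which is the true origin of that threshold rather than the $t^{-1/2}$ smoothing rate; and the strict positivity $u_x(t,0)>0$ in (vii) because $u_x(t,0)=\tilde w(t/4,0)$ and $w>0$ in $B$ by the strong maximum principle for the nondegenerate problem.

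There is a second gap in (vii): for general $u_0\in Y_m$ the derivative $u_x(0,\cdot)$ need not exist, so your maximum-principle argument for $p=u_x$ cannot be initialized at $t=0$. The paper first proves $u_x\ge0$ for data in $Y_m^{1,\gamma}$ with $\gamma>1/2$ (where $v=u_x$ is continuous up to $t=0$ by (v)), and then treats general $u_0\in Y_m$ by approximating it uniformly by data $u_{0,n}\in Y_m^{1,1}$ with $\mathcal N[u_{0,n}]\le\mathcal N[u_0]$, establishing a common existence time and uniform convergence of the corresponding solutions via a comparison argument. At $x=1$ your Hopf argument is fine since the operator is uniformly parabolic on $[\tfrac12,1]$, but at $x=0$ you must route the positivity through the transformed problem as above.
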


At least the four first points were known explicitly or 
implicitly (see \cite{Souplet-Nikos}). Concerning point vii), to our knowledge,  it was  only proved that for all
$t\in(0,T_{max})$, $$u_x(t,0)>0.$$  Although  vii) is expected, its
proof is rather technical and moreover this fact
 will turn out to be essential
in the proof of Lemma \ref{G_Lyapunov_functional}.

\subsection{Subcritical case : Lyapunov functional and convergence in $C^1([0,1])$}
\label{N=2-convergence}

From now on, we only focus on the subcritical case 
$$ \boxed{m<2}$$
which corresponds to mass lower than $8\pi$ for the original Keller-Segel system (\ref{PKS}).\\

Then, the classical solutions of $(PDE_m)$ are globally defined. More precisely :
\begin{lem}
\label{traj_globales}
 Let $m<2$ and $u_0\in Y_m$. 
 Then $$T_{max}(u_0)=+\infty.$$ 
\end{lem}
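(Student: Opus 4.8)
The plan is to combine the local theory of Theorem~\ref{thm_existence_u} with a comparison argument against a suitable stationary supersolution, whose very existence rests on the subcriticality $m<2$.

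First, by the blow-up alternative (Theorem~\ref{thm_existence_u} iii)), it suffices to produce an a priori bound $\sup_{t\in[0,T_{max})}\mathcal{N}[u(t)]<\infty$. Indeed, such a bound rules out the second branch $\lim_{t\to T_{max}}\mathcal{N}[u(t)]=+\infty$, forcing $T_{max}=+\infty$. I therefore set $K=\mathcal{N}[u_0]$ and aim at a uniform-in-time control of $\mathcal{N}[u(t)]$.

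The barriers I would use are the explicit steady states $U_a(x)=\frac{ax}{1+ax/2}$. The key point is that $U_a(1)=\frac{a}{1+a/2}\nearrow 2$ as $a\to+\infty$, whereas $\mathcal{N}[U_a]=\sup_{x\in(0,1]}\frac{a}{1+ax/2}=a<+\infty$. This is exactly where $m<2$ enters: one can choose $a=a(K,m)$ large enough that simultaneously $U_a(1)>m$ and $U_a\ge u_0$ on $[0,1]$. The domination $U_a\ge u_0$ follows by splitting $[0,1]$: near the origin, $u_0(x)\le Kx\le U_a(x)$ whenever $a\ge 2K$ and $x\le 1/K$, since then $a(1-Kx/2)\ge a/2\ge K$; away from the origin, $u_0\le m<U_a(1/K)\le U_a$ once $a$ is large, because $U_a(1/K)=\frac{2a}{2K+a}\to 2>m$ and $U_a$ is increasing. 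Having fixed such an $a$, the function $U_a$ is a stationary solution of (\ref{n2_equ_u_1}) that dominates $u$ on the whole parabolic boundary: $U_a(0)=0=u(t,0)$, $U_a(1)>m=u(t,1)$, and $U_a\ge u_0$ at $t=0$. The comparison principle then yields $u(t,x)\le U_a(x)$ for all $(t,x)$, whence $\mathcal{N}[u(t)]=\sup_x\frac{u(t,x)}{x}\le\sup_x\frac{U_a(x)}{x}=a$ uniformly in $t$, which is the sought a priori bound. For the comparison itself, writing $v=u-U_a$ and subtracting the stationary equation gives $v_t=x\,v_{xx}+u\,v_x+\dot U_a\,v$, a linear degenerate parabolic equation with bounded coefficients (recall $\dot U_a$ is bounded), to which the maximum principle applies after absorbing the zeroth-order term via $v\mapsto e^{-Ct}v$.

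I expect the main obstacle to be the rigorous justification of this comparison in the presence of the degeneracy of the diffusion at $x=0$ and of the limited regularity of $u$ up to $\{x=0\}$ and $\{t=0\}$. One must either work on truncated domains $[\varepsilon,1]$ and let $\varepsilon\to 0$, using that the boundary value $u(t,0)=0=U_a(0)$ is prescribed and that $u$ is classical in the interior, or regularize the equation before passing to the limit. The linear structure of the equation for $v$ and the smoothness of $U_a$ make this otherwise routine, so that the real content is the barrier construction, which is precisely what subcriticality $m<2$ makes possible.
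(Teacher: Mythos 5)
Your proof is correct and follows essentially the same route as the paper: reduce to a uniform bound on $\mathcal{N}[u(t)]$ via the blow-up alternative, then dominate $u_0$ by a steady state $U_a$ with $a$ large (possible precisely because $U_a(1)\nearrow 2>m$), and conclude by the comparison principle of Lemma~\ref{PC_PDE} together with $\mathcal{N}[U_a]=a$. The only cosmetic difference is the construction of the barrier: the paper uses concavity of $U_{a_0}$ (chord above the graph on $[0,x_0]$ with $x_0=m/\mathcal{N}[u_0]$, monotonicity beyond), yielding the explicit value $a_0=\mathcal{N}[u_0]/(1-\tfrac m2)$, whereas you compute directly with the closed form of $U_a$; both are valid.
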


The next lemma, stating in particular the relative compactness of the trajectory $\{u(t),\,t\geq 1\}$ in $Y_m^1$ for any initial condition $u_0\in Y_m^1$, will also be useful to check that 
$(T(t))_{t\geq 0}$ defined below is a continuous dynamical system on $Y_m^1$.

\begin{Def}
Let $u_0\in Y_m^1$ and $t\geq 0$.\\ 
We define $T(t)u_0=u(t)$ where  $u$ is the classical solution of problem $(PDE_m)$ with initial condition $u_0$.
\end{Def}

\begin{lem}
\label{lem_compacite}
 Let $m<2 $,  $t_0>0$ and $K>0$.\\
Then, $\{T(t)u_0, \;\mathcal{N}[u_0]\leq K,\; t\geq t_0\}$ is relatively 
compact in $Y_m^1$.
\end{lem}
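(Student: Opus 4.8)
The plan is to produce a bound for the trajectory in a space compactly embedded in $C^1([0,1])$, uniform both in $t\geq t_0$ and in $u_0$ with $\mathcal{N}[u_0]\leq K$, and then conclude by Arzel\`a--Ascoli. Since $Y_m^1$ is a closed subset of $C^1([0,1])$ (the constraints $u(0)=0$, $u(1)=m$ and $u_x\geq 0$ all pass to $C^1$ limits, and for a $C^1$ function $u'(0)$ automatically exists), it suffices to show that $\mathcal{E}=\{T(t)u_0:\ \mathcal{N}[u_0]\leq K,\ t\geq t_0\}$ is bounded in $C^{1,\gamma}([0,1])$ for some $\gamma\in(0,1)$; the compact embedding $C^{1,\gamma}([0,1])\hookrightarrow C^1([0,1])$ then yields relative compactness in $Y_m^1$.

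\textbf{Uniform $\mathcal{N}$ and $C^1$ bounds.} First I would establish a constant $K'=K'(K)$ with $\mathcal{N}[T(t)u_0]\leq K'$ for all $t\geq 0$ whenever $\mathcal{N}[u_0]\leq K$. In the subcritical regime $m<2$ this uniform-in-time control is precisely what underlies the absence of blow-up in Lemma~\ref{traj_globales} through the blow-up alternative of Theorem~\ref{thm_existence_u} iii); the task here is to track the dependence on $K$ so that the bound is uniform over the family of initial data. Next, using the semigroup property together with the smoothing estimate (\ref{maj_u_C1}), I would fix $s=\min(t_0,\tau(K'))$, where $\tau(K')$ is the uniform existence time from Theorem~\ref{thm_existence_u} ii). For $t\geq t_0$ I write $T(t)u_0=T(s)\bigl(T(t-s)u_0\bigr)$, whose data $T(t-s)u_0$ satisfies $\mathcal{N}\leq K'$ by the previous step, and a uniform version of (\ref{maj_u_C1}) over $[0,s]$ for initial data in $\{\mathcal{N}\leq K'\}$ gives $\|T(t)u_0\|_{C^1([0,1])}\leq C_1(K,t_0)$ uniformly over $\mathcal{E}$.

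\textbf{Uniform $C^{1,\gamma}$ bound.} It then remains to upgrade this $C^1$ bound to a uniform H\"older bound on the derivative. Away from the degenerate point, on each interval $[\delta,1]$, equation (\ref{n2_equ_u_1}) is uniformly parabolic with smooth coefficients, and the uniform $C^1$ bound just obtained feeds interior parabolic Schauder estimates (point iv of Theorem~\ref{thm_existence_u} supplies the needed interior smoothness) to bound $u_x(t)$ in $C^\gamma([\delta,1])$ uniformly for $t\geq t_0$. Near $x=0$, where the coefficient $x$ vanishes, I would instead invoke the boundary regularity of the problem, namely the $Y_m^{1,1}$ estimate of Theorem~\ref{thm_existence_u} vi), rendered quantitative and uniform in $t\geq t_0$ and in $u_0$ with $\mathcal{N}[u_0]\leq K$, along the lines of the weighted a priori estimates of \cite{Montaru1}. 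Combining the interior and boundary estimates would give $\sup_{\mathcal{E}}\|u\|_{C^{1,\gamma}([0,1])}<\infty$, which is all that is needed.

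\textbf{Main obstacle.} The genuinely hard part is the last step near $x=0$: producing a uniform-in-time modulus of continuity for $u_x(t,\cdot)$ up to the degenerate boundary. The interior estimates are standard, and the uniform $\mathcal{N}$ and $C^1$ bounds are essentially semigroup bookkeeping built on Theorem~\ref{thm_existence_u}; by contrast, controlling $u_x$ at the degeneracy requires the problem-specific weighted estimates and is where the argument must really be carried out with care.
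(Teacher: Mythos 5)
Your overall architecture (a uniform $\mathcal{N}$ bound from subcriticality, a uniform $C^1$ bound by semigroup bookkeeping, then a uniform $C^{1,\gamma}$ bound and the compact embedding into $C^1([0,1])$) coincides with the paper's, and your first two stages are essentially what the paper carries out. But the decisive step --- a H\"older modulus for $u_x(t,\cdot)$ that is uniform in $t\geq t_0$ and in the initial data and that is valid \emph{up to the degenerate point} $x=0$ --- is precisely the step you leave as a black box (``rendered quantitative and uniform \dots along the lines of the weighted a priori estimates''), and you yourself flag it as the genuinely hard part. As written the proof is therefore incomplete: interior parabolic Schauder estimates on $[\delta,1]$ carry constants that degenerate as $\delta\to 0$, and Theorem \ref{thm_existence_u} vi) only asserts $u(t)\in Y_m^{1,1}$ for each fixed $t$, with a Lipschitz constant $K=\|\tilde{w}_{rr}(t/4)\|_{\infty,[0,1]}$ that is not a priori uniform in $t$ or in $u_0$; nothing in your argument actually produces the required uniform control at the degeneracy.

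The missing idea is the substitution $w(t,y)=u(4t,|y|^2)/|y|^2$ on the unit ball $B\subset\mathbb{R}^4$ (the map $\theta_0$ of the appendix), under which $(PDE_m)$ becomes the \emph{nondegenerate} semilinear heat equation (\ref{equ_w1}) with a nonlinearity locally Lipschitz in $(w,\nabla w)$, and $\|w(t)\|_{\infty,\overline{B}}$ equals $\mathcal{N}[u]$ at the corresponding time. The paper then runs the variation-of-constants formula with the Dirichlet heat semigroup on $B$: first an absorption argument giving $\sqrt{t}\,\|w(t)\|_{C^1(\overline{B})}\leq A_K$ on a small interval $(0,\tau']$ with $\tau'=\tau'(K)$; then a second smoothing step in the interpolation space $X_{\frac{1}{2}+\gamma_0}\subset C^{1,\gamma}(\overline{B})$ yielding $\|w(\tau')\|_{C^{1,\gamma}(\overline{B})}\leq A_K''$; and finally the time shift (restarting from $w(t'-\tau')$) to make this bound uniform for all $t'\geq t_0/4$. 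Pulling back through $\theta_0$ gives a bound in $C^{1,\gamma/2}([0,1])$ for $u(t)$, $t\geq t_0$, uniform over $\mathcal{N}[u_0]\leq K$, which in particular settles the behaviour at $x=0$: the point $x=0$ corresponds to the \emph{interior} point $y=0$ of $B$, so there is no boundary degeneracy left to fight. To complete your proof you would either need to reproduce this transformation or supply an independent uniform weighted estimate at the degeneracy; the former is how the paper (and \cite{Montaru1}) actually proceeds, and even the ingredient you cite (point vi)) is itself proved there by this very change of variables.
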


\begin{lem}
\label{T_dynamical_system}
$(T(t))_{t\geq 0}$ is a continuous dynamical system on $Y_m^1$.
\end{lem}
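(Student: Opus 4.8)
The plan is to verify, in order, the defining properties of a continuous dynamical system on the metric space $Y_m^1$ (endowed with the $C^1([0,1])$ distance), in the framework recalled in subsection~\ref{Lyapunov_rappels} (see also \cite[Chapter~9]{Haraux} and \cite[Appendix~G]{QS}): that each $T(t)$ maps $Y_m^1$ into itself, that $T(0)=\mathrm{Id}$, that the semigroup law $T(t+s)=T(t)\circ T(s)$ holds, and, finally, that $(t,u_0)\mapsto T(t)u_0$ is continuous from $[0,+\infty)\times Y_m^1$ into $Y_m^1$. The first three are soft. By Lemma~\ref{traj_globales} every $u_0\in Y_m^1\subset Y_m$ satisfies $T_{max}(u_0)=+\infty$, so $T(t)u_0$ is defined for all $t\geq 0$, and Theorem~\ref{thm_existence_u}~vi) yields $u(t)\in Y_m^{1,1}\subset Y_m^1$ for $t>0$, while $T(0)u_0=u_0\in Y_m^1$ by hypothesis; hence $T(t)\colon Y_m^1\to Y_m^1$. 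The identity $T(0)=\mathrm{Id}$ is immediate, and the semigroup property follows from uniqueness (Theorem~\ref{thm_existence_u}~i)): for fixed $s\geq 0$ the map $t\mapsto u(t+s)$ is a classical solution of $(PDE_m)$ with initial datum $u(s)\in Y_m^1$, so it coincides with $T(\cdot)\,(T(s)u_0)$, whence $T(t+s)u_0=T(t)(T(s)u_0)$.

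The heart of the matter is the joint continuity, which I would reduce, by a standard $3\varepsilon$ argument, to (a) continuity of each orbit $t\mapsto T(t)u_0$ and (b) continuous dependence $u_0\mapsto T(t)u_0$, locally uniformly in $t$. For (a) on $(0,+\infty)$ I would restart the flow: since $T(s)u_0\in Y_m^{1,1}\subset Y_m^{1,\gamma}$ for any $s>0$ and any $\frac12<\gamma\leq 1$, Theorem~\ref{thm_existence_u}~v) gives $u\in C([s,+\infty),C^1([0,1]))$, so $t\mapsto T(t)u_0$ is continuous on $(0,+\infty)$.

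For (b), which I expect to be the main obstacle, I would run a compactness--uniqueness argument. Given $u_0^{(n)}\to u_0$ in $C^1([0,1])$, the induced uniform bound $\mathcal{N}[u_0^{(n)}]\leq K$ together with Lemma~\ref{lem_compacite} makes $\{T(t)u_0^{(n)}\colon t\in[t_0,T],\ n\in\mathbb{N}\}$ relatively compact in $Y_m^1$ for each $0<t_0<T$. Any $C^1$-limit of a subsequence is, after passing to the limit in the equation and invoking Theorem~\ref{thm_existence_u}~i), a classical solution with datum $u_0$, hence equals $T(t)u_0$ by uniqueness; since every subsequence admits a further subsequence converging to this same limit, the full sequence converges, which gives (b) uniformly on $[t_0,T]$.

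There finally remains the delicate continuity at $t=0$, namely $T(t)u_0\to u_0$ in $C^1([0,1])$ as $t\to 0^+$ for a general $u_0\in Y_m^1$, since Theorem~\ref{thm_existence_u}~v) applies directly only to the subclass $Y_m^{1,\gamma}$. Here I would argue by density, approximating $u_0$ in $C^1([0,1])$ by data in $Y_m^{1,\gamma}$ and combining the continuity at $0$ of the approximants (from~v)) with the continuous-dependence bound (b), through a $3\varepsilon$ argument made uniform on a short interval $[0,\tau(K)]$ provided by Theorem~\ref{thm_existence_u}~ii). Reconciling the $C^1$ continuity up to $t=0$ with the weaker regularity of a generic element of $Y_m^1$ is the point I expect to require the most care; once it is settled, combining (a) and (b) with the semigroup law yields the joint continuity on all of $[0,+\infty)\times Y_m^1$.
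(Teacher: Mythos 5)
Your verification of the algebraic axioms, of the orbit continuity on $(0,+\infty)$ (restarting from $T(s)u_0\in Y_m^{1,1}$ and invoking Theorem~\ref{thm_existence_u}~v)), and of the continuity of $u_0\mapsto T(t)u_0$ for fixed $t>0$ (uniform $\mathcal{N}$-bound, compactness from Lemma~\ref{lem_compacite}, identification of the unique accumulation point by uniqueness) is essentially the paper's proof; the paper identifies the accumulation point slightly more concretely, by first proving $C^0$ continuous dependence uniformly on $[0,T_0]$ via the comparison/exponential-weight argument already used in the proof of Theorem~\ref{thm_existence_u}~vii), rather than by ``passing to the limit in the equation'', which as stated would require additional control on second derivatives to justify.

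The genuine problem is your last step. First, it is not required: the definition of a continuous dynamical system adopted in subsection~\ref{Lyapunov_rappels} asks only that $T(t)\in C(Y_m^1,Y_m^1)$ for each fixed $t$ and that $t\mapsto T(t)u_0$ belong to $C((0,\infty),Y_m^1)$ --- the interval is open at $0$, so neither joint continuity nor $C^1$ continuity of the orbit at $t=0$ is part of the statement. Second, the density argument you sketch for it does not close. The $3\varepsilon$ scheme needs the continuous-dependence estimate (b) to hold \emph{uniformly on $[0,\tau]$ including $t=0$} in the $C^1$ norm, but your (b) rests on Lemma~\ref{lem_compacite}, which only applies for $t\geq t_0>0$, and the available smoothing bound (\ref{maj_u_C1}) degenerates like $t^{-1/2}$ as $t\to 0^+$; no uniform-in-time $C^1$ stability near $t=0$ is provided by the paper (only a $C^0$ one). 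Worse, for a generic $u_0\in Y_m^1$ the transformed datum $w_0=\theta_0(u_0)$ need not lie in $C^1(\overline{B})$ (Lemma~\ref{propriete_theta} requires $u_0\in Y_m^{1,\gamma}$ with $\gamma>\tfrac12$ for that), so it is not even clear that $T(t)u_0\to u_0$ in $C^1([0,1])$ holds for all $u_0\in Y_m^1$. You should simply drop this step and prove the four properties as actually defined.
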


We now introduce a functional which is an analogue of $\mathcal{F}$ in the case $q=1$.
\begin{Def} Let
 $\mathcal{M}=\{ u\in Y_m^1,\; u_x>0 \text{ on } [0,1]\}$.\\
We define for all $u\in \mathcal{M}$, 
$$\mathcal{G}[u]=\int_0^1 u_x[\ln u_x-1]-\frac{u^2}{2x}.$$
\end{Def}
Indeed, we have the following result.
\begin{lem}
\label{G_Lyapunov_functional}
$\mathcal{G}$ is a strict Lyapunov functional for $(T(t))_{t\geq 0}$.
\end{lem}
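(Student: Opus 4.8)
The plan is to show that $\mathcal{G}$ decreases strictly along every nonconstant trajectory of $(T(t))_{t\geq 0}$, by computing $\frac{d}{dt}\mathcal{G}[u(t)]$ and recognizing it as (minus) a weighted square of the quantity that drives the flow. Recalling that for $N=2$, $q=1$, equation (\ref{n2_equ_u_1}) can be written in the gradient-flow form analogous to (\ref{PDE_m_autre_formulation}), namely $u_t = x\,u_x\left[\frac{d}{dx}\ln u_x + \frac{u}{x}\right]$, I would first define the velocity potential $v = \frac{d}{dx}\ln u_x + \frac{u}{x} = \frac{u_{xx}}{u_x}+\frac{u}{x}$, so that $u_t = x\,u_x\,v$. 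Then, differentiating $\mathcal{G}[u(t)] = \int_0^1 u_x[\ln u_x - 1] - \frac{u^2}{2x}$ under the integral sign gives $\frac{d}{dt}\mathcal{G}[u(t)] = \int_0^1 \ln u_x \cdot u_{xt} - \frac{u\,u_t}{x}$, where I used $\partial_{u_x}(u_x[\ln u_x-1]) = \ln u_x$. An integration by parts on the first term (noting $u_t(t,0)=u_t(t,1)=0$, since the boundary values $u(t,0)=0$ and $u(t,1)=m$ are fixed in $t$, which kills the boundary terms $[\ln u_x \cdot u_t]_0^1$) converts $\int_0^1 \ln u_x\, u_{xt}$ into $-\int_0^1 \frac{d}{dx}(\ln u_x)\,u_t$, and collecting terms yields
$$\frac{d}{dt}\mathcal{G}[u(t)] = -\int_0^1 \left[\frac{d}{dx}\ln u_x + \frac{u}{x}\right] u_t = -\int_0^1 v \cdot x\,u_x\,v = -\int_0^1 x\,u_x\,v^2 \leq 0.$$

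The key structural point is that the integrand $x\,u_x\,v^2$ is the squared Riemannian norm $g_u(u_t,u_t)$ of the velocity, which is manifestly nonnegative, and here is exactly where point vii) of Theorem \ref{thm_existence_u} is essential: since $u_x(t,x)>0$ strictly on all of $[0,1]$ for $t>0$, the weight $x\,u_x>0$ on $(0,1]$, so $\frac{d}{dt}\mathcal{G}[u(t)]=0$ forces $v\equiv 0$ on $(0,1]$, i.e. $u_t\equiv 0$, meaning $u(t)$ is a stationary point. To conclude that $\mathcal{G}$ is a \emph{strict} Lyapunov functional, I would then invoke the characterization that $\frac{d}{dt}\mathcal{G}[u(t)]=0$ on a time interval implies the trajectory is constant and equal to an equilibrium, which is exactly the definition of strictness recalled in subsection \ref{Lyapunov_rappels}.

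Two points will require care rather than real difficulty, and one is the genuine obstacle. The routine care concerns justifying the differentiation under the integral sign and the integration by parts near the degenerate endpoint $x=0$: the smoothing (\ref{maj_u_C1}) and the interior regularity iv) of Theorem \ref{thm_existence_u} give enough control on $u_x$, $u_{xx}$, and $u_{xt}$ on compact subsets of $(0,1]$, so I would first establish the identity on $[\varepsilon,1]$ and then pass to the limit $\varepsilon\to 0^+$, checking that the boundary contributions at $x=\varepsilon$ vanish (using $u(t,x)\sim u_x(t,0)\,x$ and $\ln u_x$ bounded near $0$). The main obstacle is precisely the behavior of $\ln u_x$ and the weight $x\,u_x$ at the degenerate boundary $x=0$: unlike the case $N\geq 3$, the integrand $u_x\ln u_x$ need not be obviously integrable a priori, and one must verify that all the limiting boundary terms in the integration by parts actually vanish. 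This is where strict positivity $u_x(t,0)>0$ (a consequence of vii), together with the $C^1$ regularity $u(t)\in Y_m^{1,1}$ from vi), does the crucial work: it keeps $\ln u_x$ finite and the energy $\mathcal{G}[u(t)]$ well defined and finite along the flow, legitimizing the whole computation.
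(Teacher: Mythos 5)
Your proposal is correct and follows essentially the same route as the paper: differentiate $\mathcal{G}$ under the integral sign, integrate by parts using $u_t(t,0)=u_t(t,1)=0$ to obtain $\frac{d}{dt}\mathcal{G}[u(t)]=-\int_0^1 \frac{u_t^2}{x\,u_x}$ (your $-\int_0^1 x\,u_x\,v^2$ is the same quantity), and use the strict positivity $u_x>0$ on all of $[0,1]$ from Theorem \ref{thm_existence_u} vii) both to keep $\ln u_x$ bounded and to conclude that a constant value of $\mathcal{G}$ forces $u_t\equiv 0$. The only (inessential) difference is in the justification of the computation: where you propose an $\varepsilon$-cutoff with a limiting argument at the degenerate endpoint, the paper instead bounds $u_t$ and $u_{tx}$ directly on $I\times[0,1]$ via the explicit formulas relating $u$ to the radial solution $\tilde w$ of the transformed problem in the ball, and then applies dominated convergence on the whole interval.
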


As a consequence, we finally get :
\begin{lem}
\label{N=2_lem_convergence_C1}
Let $0\leq m<2$ and $u_0\in Y_m$. Then\\

$$u(t) \underset{t \rightarrow + \infty}{ \longrightarrow } 
U_a \mbox{ \quad in \quad } C^1([0,1])$$
where  
$$a=\frac{m}{1-\frac{m}{2}}. $$
\end{lem}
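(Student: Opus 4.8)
The plan is to run the classical LaSalle invariance argument for the continuous dynamical system $(T(t))_{t\geq 0}$ on $Y_m^1$, driven by the strict Lyapunov functional $\mathcal{G}$. First I would dispose of the trivial case $m=0$, where $Y_0=\{0\}$ forces $u\equiv 0=U_0$. For $0<m<2$ and general $u_0\in Y_m$, the solution is global by Lemma \ref{traj_globales}, and I would flow for an arbitrarily short time to enter a nicer class: by Theorem \ref{thm_existence_u} vi), $u(t_0)\in Y_m^{1,1}\subset Y_m^1$ for every $t_0>0$, and by point vii) one has $u_x(t_0,\cdot)>0$ on $[0,1]$, so in fact $u(t_0)\in\mathcal{M}$. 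Replacing $u_0$ by $u(t_0)$, it suffices to prove convergence for initial data in $\mathcal{M}$, where $(T(t))_{t\geq 0}$ is a continuous dynamical system (Lemma \ref{T_dynamical_system}) and $\mathcal{G}$ is defined.

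Next I would invoke relative compactness. Since $\mathcal{N}[u(t_0)]<\infty$, Lemma \ref{lem_compacite} shows that the forward orbit $\{u(t),\,t\geq t_0\}$ is relatively compact in $Y_m^1$; hence its $\omega$-limit set $\omega(u(t_0))$ is nonempty, compact, connected, and positively invariant under $(T(t))$. Because $\mathcal{G}$ is a strict Lyapunov functional (Lemma \ref{G_Lyapunov_functional}), it is nonincreasing along the orbit and, being bounded below on the relatively compact orbit, $\mathcal{G}[u(t)]$ converges to some $\ell$ as $t\to\infty$. The invariance principle then gives that $\mathcal{G}\equiv\ell$ on $\omega(u(t_0))$ and, by strictness, that every element of $\omega(u(t_0))$ is an equilibrium of $(T(t))$, i.e. a stationary solution of $(PDE_m)$.

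I would then classify the equilibria. The stationary solutions of $(PDE_m)$ for $N=2$ are exactly the $U_a|_{[0,1]}$ with $U_a(x)=U_1(ax)$, $U_1(x)=x/(1+x/2)$; in the subcritical regime $m<2$ the boundary condition $U_a(1)=m$ singles out the unique value $a=m/(1-m/2)\in[0,+\infty)$, and this $U_a$ satisfies $\dot{U_a}>0$ on $[0,1]$, so it lies in $\mathcal{M}$. Consequently $\omega(u(t_0))=\{U_a\}$, and relative compactness upgrades this to $u(t)\to U_a$ in $Y_m^1$, i.e. in $C^1([0,1])$, as $t\to\infty$.

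The main obstacle is the interface between the domain of $\mathcal{G}$ and the $\omega$-limit set: $\mathcal{M}$ is only an open subset of $Y_m^1$, so a priori a limit point $v\in\omega(u(t_0))$ could lie on $\partial\mathcal{M}$, where $v_x$ vanishes somewhere and where $\mathcal{G}$ (and the associated Lyapunov inequality) need not control $v$. This is exactly where point vii) of Theorem \ref{thm_existence_u} is decisive: the strict positivity $u_x>0$, propagated to the limit together with the fact that the only stationary solution for $m<2$ is the nondegenerate $U_a\in\mathcal{M}$, must be used to rule out boundary limit points and to guarantee that the invariance principle applies within $\mathcal{M}$. Once $\omega(u(t_0))$ is pinned to the single nondegenerate equilibrium $U_a$, no further argument (such as a stable-manifold or gradient-inequality estimate) is needed, since uniqueness of the limit point of a relatively compact orbit yields genuine convergence.
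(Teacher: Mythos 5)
Your proposal is correct and follows essentially the same route as the paper: shift to a positive time so the orbit lies in $Y_m^1$, invoke the relative compactness of Lemma \ref{lem_compacite} and the strict Lyapunov functional $\mathcal{G}$ of Lemma \ref{G_Lyapunov_functional}, apply Lasalle's invariance principle, and conclude from the uniqueness of the steady state $U_{a}$ for $m<2$. Your extra remark on ruling out $\omega$-limit points on $\partial\mathcal{M}$ via Theorem \ref{thm_existence_u} vii) is a sensible precaution that the paper handles implicitly, but it does not change the argument.
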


\section{A Hardy type inequality}

The aim of this section is to prove Proposition~\ref{prop_existence_lambda1}. 
First, we will need to establish some intermediate lemmas.
For reader's convenience, we recall that
$$L=L^2\left((0,1),\frac{dx}{x^{2-q}}\right) \qquad \text{ and} \qquad  
H=\left\{h\in L, \int_0^1 \dot{h}^2 <\infty,\, h(0)=h(1)=0\right\}$$
and that $L$ and $H$ are equipped with the following norms
$$\|h\|_L^2=\int_0^1 \frac{h^2}{x^{2-q}}\qquad \text{ and} \qquad  
\|h\|_H^2=\int_0^1 \frac{h^2}{x^{2-q}}+\int_0^1 \dot{h}^2.$$

\begin{rmq}
\label{H=H10}
Actually, we can see that $$H=H_0^1$$ and that $\|\cdot \|_{H_0^1}$ and $\|\cdot\|_H$ are equivalent. \\Indeed, $H\subset H^1_0$ with continuous embedding is obvious and the reverse  is also true by the standard Hardy  inequality 
\begin{equation}
\label{Hardy_standard}
\int_0^1 \frac{h^2}{x^{2}}\leq 4\int_0^1  \dot{h}^2.
\end{equation}
valid for any $h\in H^1_0$.
Note also that L and H are  separable Hilbert spaces. 
\end{rmq}

We will need the following compactness result.
\begin{lem}
\label{lem_compacite_H-L}
The imbedding $H\subset L$ is compact.
\end{lem}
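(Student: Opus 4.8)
The plan is to prove sequential compactness directly: given a bounded sequence $(h_n)$ in $H$, I want to extract a subsequence converging in $L$. The natural starting point is Remark~\ref{H=H10}, which guarantees that $H=H_0^1$ with equivalent norms; hence $(h_n)$ is bounded in $H_0^1(0,1)$. By the one-dimensional Rellich--Kondrachov theorem, equivalently the compact embedding $H_0^1(0,1)\hookrightarrow C([0,1])$ (recall $H_0^1\subset C^{1/2}([0,1])$), I can extract a subsequence, still denoted $(h_n)$, converging uniformly on $[0,1]$ to some limit $h$. Passing if necessary to a further subsequence converging weakly in the Hilbert space $H_0^1$ and using uniqueness of limits, one checks that the uniform limit satisfies $h\in H_0^1=H$, so that in particular $C:=\sup_n \int_0^1 (\dot{h_n}-\dot{h})^2<\infty$.

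The remaining task is to upgrade uniform convergence to convergence in $L$, i.e. to show $\int_0^1 \frac{(h_n-h)^2}{x^{2-q}}\to 0$, the only difficulty being the singular weight $x^{-(2-q)}$ at $x=0$. To handle it I would fix a small parameter $\delta\in(0,1)$ and split
$$\int_0^1 \frac{(h_n-h)^2}{x^{2-q}}=\int_0^\delta \frac{(h_n-h)^2}{x^{2-q}}+\int_\delta^1 \frac{(h_n-h)^2}{x^{2-q}}.$$
On $[\delta,1]$ the weight is bounded by $\delta^{-(2-q)}$, so this part is at most $\delta^{-(2-q)}\,\|h_n-h\|_{L^\infty}^2$, which tends to $0$ as $n\to\infty$ for each fixed $\delta$ by uniform convergence. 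For the singular part I would write $\frac{1}{x^{2-q}}=\frac{x^q}{x^2}\leq \frac{\delta^q}{x^2}$ on $(0,\delta)$ and invoke the standard Hardy inequality (\ref{Hardy_standard}):
$$\int_0^\delta \frac{(h_n-h)^2}{x^{2-q}}\leq \delta^q\int_0^1 \frac{(h_n-h)^2}{x^{2}}\leq 4\,\delta^q\int_0^1 (\dot{h_n}-\dot{h})^2\leq 4C\,\delta^q.$$
The crucial point is that this bound is uniform in $n$ and, since $q=\frac{2}{N}>0$, it tends to $0$ as $\delta\to 0$. A routine $\varepsilon/2$ argument (choose $\delta$ so that $4C\delta^q<\varepsilon/2$, then $n$ large so the tail on $[\delta,1]$ is $<\varepsilon/2$) then yields $\|h_n-h\|_L^2\to 0$, which is exactly the claimed compactness.

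The main obstacle is precisely the non-integrable singularity of the weight at the origin, where uniform convergence alone is worthless: the weight is not even in $L^1$ there. What rescues the argument is the interplay between the Hardy inequality, which controls $\int_0^1 (h_n-h)^2/x^2$ uniformly by the $H$-bound, and the favorable gain of the extra factor $x^q$ with the \emph{strictly positive} exponent $q=2/N$. This is what converts a merely uniform Hardy bound into a uniformly small contribution from the neighborhood of $x=0$; note that the argument would fail at the borderline weight $x^{-2}$ (i.e. $q=0$), so the subcriticality of the weight relative to the Hardy exponent is essential.
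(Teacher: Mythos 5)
Your proof is correct, but it takes a genuinely different route from the paper's. The paper factors the embedding as $H\subset C_0^{1/2}\subset C_0^{\gamma}\subset L$ for a Hölder exponent $\gamma\in(\frac{1-q}{2},\frac12)$: the first inclusion is continuous (the same $|h(x)-h(y)|\le\sqrt{|x-y|}\,\|\dot h\|_{L^2}$ estimate you implicitly use), the middle one is compact by Arzelà--Ascoli, and the last one is continuous because $\int_0^1 x^{-(2-q-2\gamma)}\,dx<\infty$ precisely when $\gamma>\frac{1-q}{2}$; compactness then follows from composing a compact map with continuous ones, with no sequence extraction or cutoff needed. You instead argue by sequential compactness, extracting a uniformly convergent subsequence via $H=H_0^1\hookrightarrow\hookrightarrow C([0,1])$ (Remark~\ref{H=H10}) and then handling the singular weight by the splitting at $\delta$ combined with the standard Hardy inequality (\ref{Hardy_standard}), which gives the uniform-in-$n$ bound $4C\delta^q$ near the origin. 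Both arguments hinge on exactly the same two ingredients --- the $C^{1/2}$ compactness in one dimension and the strict positivity of $q$, which makes the weight subcritical relative to either the integrability threshold ($2-q-2\gamma<1$) or the Hardy exponent ($x^{-(2-q)}\le\delta^q x^{-2}$) --- so neither works for $q=0$, as you correctly note. The paper's version is slightly more economical; yours is more self-contained in that it makes the role of the Hardy inequality (already needed for Remark~\ref{H=H10}) explicit and avoids introducing the intermediate Hölder scale.
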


\begin{proof}
For any $\alpha\in (0,1]$, we denote $$C_0^{\alpha}= \{h\in C^{\alpha}([0,1]),\; h(0)=0\}$$
the Banach space equipped with the norm $$\|h\|_{C_0^\alpha}=\underset{\underset{x\neq y}{(x,y)\in[0,1]^2}}{\sup}\frac{|h(x)-h(y)|}{|x-y|^\alpha}.$$
It is clear that $H\subset C_0^{\frac{1}{2}}$ with continuous imbedding
since if $h\in H$, $$|h(x)-h(y)|=|\int_y^x \dot{h}|\leq \sqrt{|x-y|}\sqrt{\int_0^1 \dot{h}^2}.$$
Now, let $\gamma \in (\frac{1-q}{2},\frac{1}{2})$.  The imbedding $C_0^{\frac{1}{2}}\subset C_0^{\gamma} $
is compact and the imbedding $C_0^{\gamma} \subset L$ is continuous since for all $h\in C_0^{\gamma}$,
$$\|h\|_L^2=\int_0^1  \frac{h^2}{x^{2-q}}\leq \|h\|_{C_0^\gamma}^2 \int_0^1  \frac{1}{x^{2-q-2\gamma}}$$
with  $\int_0^1  \frac{1}{x^{2-q-2\gamma}}<\infty$ since $2-q-2\gamma<1$.
\end{proof}

\medskip

The following lemma, whose proof relies on a technique used in 
\cite{Beesack} to get extensions of Hardy's inequality,
 will be essential in the proof of Proposition~\ref{prop_existence_lambda1}.

\begin{lem}
\label{lem_fonctionnelle_positive}
Let  $0<a<A$. Then, for all $h\in H$ 
\begin{equation}
\label{inegalite_fonctionnelle}
\int_0^1 \frac{\dot{h}^2}{\dot{U_a}^q}-\frac{h^2}{x^{2-q}}\geq 0
\end{equation}
with equality if and only if $h=0$.
\end{lem}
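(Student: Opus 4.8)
The plan is to prove the inequality by the classical factorization (``ground state substitution'') method used by Beesack, whose entire difficulty here comes from the degeneracy of the weights at the endpoint $x=0$. Write $p(x)=\dot{U_a}(x)^{-q}$ and $r(x)=x^{-(2-q)}$, so that the two sides of (\ref{inegalite_fonctionnelle}) are $\int_0^1 p\,\dot{h}^2$ and $\int_0^1 r\,h^2=\|h\|_L^2$; both are finite for $h\in H$, since $\dot{U_a}$ is continuous and strictly positive on the compact $[0,1]$ (recall $\dot{U_a}(0)=a>0$ and $a\in(0,A)$), hence $p$ is bounded. The engine of the proof is a positive solution $w$ of the linearized stationary equation $(p\,\dot{w})'+r\,w=0$ on $(0,1)$. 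Such a solution is $w=x\,\dot{U_a}$ — equivalently, up to a positive constant, $w_a=\frac{d}{da}U_a$, the very function appearing in Remark~\ref{rmq_philippe}. I would verify the ODE directly: since $p\,\dot{w}=\dot{U_a}^{1-q}-U_a\,x^{q-1}$, differentiating and using the stationary identity $\ddot{U_a}/\dot{U_a}^{q}=-U_a/x^{2-q}$ gives $(p\,\dot{w})'=-\dot{U_a}\,x^{q-1}=-r\,w$. Note $w>0$ on $(0,1]$.

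Next comes the factorization. On any interval $[\epsilon,1]$ the function $g=h/w$ lies in $H^1(\epsilon,1)$ (as $w$ is smooth and bounded away from $0$ there); writing $\dot{h}=\dot{w}\,g+w\,\dot{g}$, integrating by parts the cross term and using the ODE for $w$ yields the exact identity
$$\int_\epsilon^1 p\,\dot{h}^2-\int_\epsilon^1 r\,h^2=\int_\epsilon^1 p\,w^2\,\dot{g}^2+\Big[\tfrac{p\,\dot{w}}{w}\,h^2\Big]_\epsilon^1.$$
At $x=1$ the bracket vanishes because $h(1)=0$, so the boundary contribution reduces to $-B(\epsilon)$, where $B(\epsilon):=\frac{p\dot{w}}{w}(\epsilon)\,h(\epsilon)^2\ge0$. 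As $\epsilon\to0$ the two integrals on the left converge (to finite limits), and the integrand $p\,w^2\dot{g}^2\ge0$; hence everything reduces to proving $B(\epsilon)\to0$, after which $\int_0^1 p\,w^2\dot{g}^2$ converges and $\int_0^1 p\,\dot{h}^2-\int_0^1 r\,h^2=\int_0^1 p\,w^2\dot{g}^2\ge0$.

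The main obstacle is exactly this boundary term at the degenerate endpoint, because the weight blows up: from $p\dot{w}\to a^{1-q}$ and $w\sim a\,x$ one gets $\frac{p\dot{w}}{w}\sim\frac{1}{a^{q}x}$ as $x\to0$, so $B(\epsilon)$ is only $O(\epsilon^{-1}h(\epsilon)^2)$ and the mere membership $h\in L$ cannot kill it. The resolution — the crux of the lemma — is to use the full strength of $h\in H=H^1_0$ (so that, by Remark~\ref{H=H10}, $h\in C^{1/2}$ and $h(\epsilon)\to h(0)=0$): Cauchy--Schwarz gives $h(\epsilon)^2=\big(\int_0^\epsilon\dot{h}\big)^2\le\epsilon\int_0^\epsilon\dot{h}^2=o(\epsilon)$, since $\int_0^\epsilon\dot{h}^2\to0$ as the tail of a convergent integral. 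Therefore $B(\epsilon)=O(\epsilon^{-1})\cdot o(\epsilon)\to0$, which closes the inequality.

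Finally, for the equality case I would observe that equality in (\ref{inegalite_fonctionnelle}) forces $\int_0^1 p\,w^2\dot{g}^2=0$, i.e. $\dot{g}\equiv0$, i.e. $h=c\,w=c\,x\,\dot{U_a}$ for some constant $c$. But $h\in H$ requires $h(1)=0$, whereas $w(1)=\dot{U_a}(1)>0$; hence $c=0$ and $h\equiv0$, as claimed.
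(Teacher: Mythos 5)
Your proof is correct and follows essentially the same route as the paper: the same ground state $w=x\,\dot{U_a}$ (a positive multiple of the paper's $w_a=\frac{d}{da}U_a$), the same ODE $(p\dot{w})'+rw=0$, the same Beesack factorization identity (your $p\,w^2\dot{g}^2$ is exactly the paper's $[\dot{h}-\frac{\dot{w_a}}{w_a}h]^2/\dot{U_a}^q$), and the same treatment of the equality case via $h(1)=0$, $w(1)>0$. The only divergence is the final technical step: the paper proves the identity for $h\in C_c^\infty((0,1))$ (so no boundary terms appear) and then passes to all of $H$ by density and dominated convergence, whereas you work directly on $[\epsilon,1]$ and kill the boundary term at the degenerate endpoint via $h(\epsilon)^2\le\epsilon\int_0^\epsilon\dot{h}^2=o(\epsilon)$ -- an equally valid completion.
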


Before giving the proof, we recall some useful properties of $U_a$. 
For all $a\geq 0$,
\begin{equation}
 \label{equation_U_a}
x^{2-\frac{2}{N}}\ddot{U_a}+U_a\, \dot{U_a}^\frac{2}{N}=0
\end{equation}
and
$$\dot{U_a}(0)=a.$$
This implies the concavity of $U_a$, so $$ \dot{U_a}(1)\leq \dot{U_a}\leq a \text{ on }[0,1].$$
Moreover, for all $x\in [0,1]$,
$$U_a(x)=U_1(ax) .$$
Since $U_1$ is increasing on $[0,A]$ (and  flat after $x=A$) 
for some $A>0$, then
$$\text{ for } 0<a<A,\;\dot{U_a}>0 \text{ on } [0,1].$$

\begin{proof}
We denote for 	all $x\in[0,1]$, 
$$w_a(x)=\frac{d}{da}U_a(x)=x\;\dot{U_1}(ax).$$
We see that $ w_a>0$  
on  $(0,1]$ since $0<a<A$. \\
Moreover, for all $x\in[0,1]$,  noting first that 
$$\dot{w_a}(x)=\dot{U_1}(ax)+ax \ddot{U_1}(ax),$$
we have 
\begin{align*}
\frac{\dot{w_a}(x)}{\dot{U_a}^q(x)}&=\frac{\dot{U_1}(ax)^{1-q}}{a^q}+a^{1-q}x \frac{\ddot{U_1}(ax)}{\dot{U_1}(ax)^q}\\
&=\frac{\dot{U_1}(ax)^{1-q}}{a^q}-\frac{U_1(ax)}{ax^{1-q}} 
\text{ \quad by } (\ref{equation_U_a})
\end{align*}
then, we obtain 
\begin{align*}
\frac{d}{dx}\left[\frac{\dot{w_a}}{\dot{U_a}^q}\right]&=
(1-q)a^{1-q}\frac{\ddot{U_1}(ax)}{\dot{U_1}(ax)^q}+(1-q)\frac{U_1(ax)}{ax^{2-q}}-\frac{\dot{U_1}(ax)}{x^{1-q}}\\
&=-(1-q)a^{1-q}\frac{{U_1}(ax)}{(ax)^{2-q}}+(1-q)\frac{U_1(ax)}{ax^{2-q}}-\frac{w_a(x)}{x^{2-q}}=-\frac{w_a(x)}{x^{2-q}}
\end{align*}
again by (\ref{equation_U_a}), so we see that $w_a$ satisfies 
\begin{equation}
\label{equation_w_a}
\frac{d}{dx}\left[\frac{\dot{w_a}}{\dot{U_a}^q}\right]+\frac{w_a}{x^{2-q}}=0.
\end{equation}
We note that this equation could  also be obtained  by  differentiating $(\ref{equation_U_a})$ with respect to $a$.\\

The proof of Lemma \ref{lem_fonctionnelle_positive} will be made by density.  Therefore, we first make the following computation  for any $h\in C_c^\infty((0,1))$.
\begin{align*}
\int_0^1 \frac{[\dot{h}-\dfrac{\dot{w_a}}{w_a}h]^2}{\dot{U_a}^q}&=
\int_0^1 \frac{\dot{h}^2}{\dot{U_a}^q}+\int_0^1 \left[\frac{\dot{w_a}}{w_a}\right]^2 \frac{h^2}{\dot{U_a}^q}-\int_0^1 2h\dot{h}\,\left[ \frac{\dot{w_a}}{w_a}\frac{1}{\dot{U_a}^q}\right]\\
&=\int_0^1 \frac{\dot{h}^2}{\dot{U_a}^q}+\int_0^1 \left[\frac{\dot{w_a}}{w_a}\right]^2 \frac{h^2}{\dot{U_a}^q}+\int_0^1 h^2 \frac{d}{dx}\left[\frac{\dot{w_a}}{w_a}\frac{1}{\dot{U_a}^q}\right]\\
&=\int_0^1 \frac{\dot{h}^2}{\dot{U_a}^q}+\int_0^1 h^2\,
\left( \left[\frac{\dot{w_a}}{w_a}\right]^2 \frac{1}{\dot{U_a}^q}+ 
\frac{d}{dx}\left[\frac{\dot{w_a}}{w_a}\frac{1}
{\dot{U_a}^q}\right]\right)
\end{align*}
where we used $h(0)=h(1)=0$ in the integration by parts. \\
 From (\ref{equation_w_a}), we deduce
 \begin{equation*}
 \frac{d}{dx}\left[\frac{1}{w_a}\frac{\dot{w_a}}
{\dot{U_a}^q}\right]+\frac{1}{\dot{U_a}^q}\left[\frac{\dot{w_a}}
{w_a}\right]^2=\frac{d}{dx}\left[\frac{\dot{w_a}}
{\dot{U_a}^q}\right]\frac{1}{w_a}=-\frac{1}{x^{2-q}}
 \end{equation*}
 which, coming back to the previous computation, implies
 $$\int_0^1 \frac{\dot{h}^2}{\dot{U_a}^q}-\frac{h^2}{x^{2-q}}=\int_0^1 \frac{[\dot{h}-\dfrac{\dot{w_a}}{w_a}h]^2}{\dot{U_a}^q}\geq 0 .$$
 Let $h\in H$. Since $H=H^1_0(0,1)$ with equivalent norms, then
 $C_c^\infty((0,1))$ is dense in $H$ so 
 there exists $h_n\in C_c^\infty((0,1))$ such that
$h_n\rightarrow h$ in $L$ and $\dot{h_n}\rightarrow \dot{h}$ in $L^2((0,1),dx)$ with convergence almost everywhere in $(0,1)$ and domination by two functions respectively in $L$ and $L^2(0,1)$.
Hence, by Lebesgue's dominated convergence theorem, the previous equation is also valid for h.\\

Since $w_a\left(\frac{h}{w_a}\right)'=\dot{h}-\dfrac{\dot{w_a}}{w_a}h$,
there is equality if and only if $h=c\,w_a$ for some $c\in \mathbb{R}$ almost everywhere on $(0,1)$,  but actually everywhere  on $[0,1]$ since $h$ and $w_a$ are both continuous.
 Now, we note that  $h(1)=0$ and $w_a(1)>0$ since $a<A$,  so $h=c\,w_a$ implies $c=0$, i.e $h=0$.
\end{proof}
\medskip
\begin{proof}[Proof of Proposition \ref{prop_existence_lambda1}.]
The following procedure is standard.\\
Considering the  symmetric bilinear form $\Lambda$ defined on H  
by
$$ \Lambda(h,k)=\int_0^1 \frac{\dot{h}\dot{k}}{\dot{U_a}^q} \text{\qquad for all }(h,k)\in H^2,$$
 it is easy to see that $\Lambda$ is continuous and coercive. Hence,   we can apply the Lax-Milgram theorem and prove that 
for any $\varphi \in H'$, there exists a unique $h\in H$ such that
$$\Lambda(h,\cdot)=\varphi. $$ 

Thanks to Lemma \ref{lem_fonctionnelle_positive}, any $f\in L$ defines $\varphi_f\in H'$ by 
$$\varphi_f(k)=\int_0^1 \frac{f\,k}{x^{2-q}} \quad \text{ for all } k\in H. $$

We then define $T :L\rightarrow  L$ by $Tf=h$ where $h\in H$ is such that 
$\Lambda(h,\cdot)=\varphi_f. $ It is easy to see that $T$ is 
self-adjoint, continuous (thanks to Lax-Milgram) and even compact, 
thanks to  Lemma \ref{lem_compacite_H-L}. \\

The end of the proof, which relies on the theory of compact self-adjoint 
operators on a separable Hilbert  space, is completely similar to that of \cite[Theorem 2, p.336]{Evans}. 
Moreover, since the infimum in 
$$\lambda_1=\underset{\underset{h\neq 0}{h\in H}}{\inf} \frac{\Lambda(h,h)}{\|h\|_L^2}$$
is reached, then 
 Lemma \ref{lem_fonctionnelle_positive} implies $\lambda_1>1$.

\end{proof}

\section{Convergence with exponential speed in $L^2\left((0,1),\frac{dx}{x^{2-q}}\right)$}

\begin{proof}[Proof of Lemma \ref{lem_convergence_expo_L2}.]
We  let
\begin{equation}
\label{formule_u_h}\nonumber
u=U_a+h.
\end{equation}
To get the result, it is equivalent to show the existence of $C>0$ such that 
$$\gamma(t)\leq C \exp(-2\,\lambda\,\dot{U_a}(1)^q\;t) $$
where 
\begin{equation}\nonumber
\label{def_gamma}
\gamma(t)=g_{U_a}(h(t),h(t))=\int_0^1 \frac{h(t)^2}{x^{2-q}\dot{U_a}^q}.
\end{equation}

An easy computation shows that for any $t>0$ and any $x\in(0,1]$, 

\begin{equation}
\label{equation_h}
h_t=L_{U_a}h+F(x,h,\dot{h})
\end{equation}
 where 
\begin{align}
\label{formule_L1}
L_{U_a}h
&=\left[x^{2-q}\right] \; \ddot{h}+\left[q\frac{U_a}{\dot{U_a}^{1-q}}\right]\;\dot{h}+\left[\dot{U_a}^q\right] \;h\\
\label{formule_L2}
&=x^{2-q} \dot{U_a}^q \; \frac{d}{dx}\left[\frac{\dot{h}}{\dot{U_a}^q}\right]+\dot{U_a}^q \;h
\end{align}
and 
\begin{equation}
\label{formule_F}
F(x,h,\dot{h})=\frac{q}{\dot{U_a}^{1-q}}\; h\dot{h}+\left[h\; \dot{U_a}^q+U_a\dot{U_a}^q \right]\left[\left(1+\frac{\dot{h}}{\dot{U_a}}\right)^q-1-q\frac{\dot{h}}{\dot{U_a}} \right].
\end{equation}

We already know from \cite[Proposition 2.1]{Montaru1} that if $t_0>0$,
\begin{equation}
\label{formule_H1}
 h_x(t,x)=\psi(t,x^{\frac{q}{2}}) \text{ \qquad for all } t\geq t_0,\;x\in[0,1]
 \end{equation}
where
\begin{equation}
\label{formule_H2}
\psi\in C^{1,\infty}([t_0,\infty)\times [0,1])
\end{equation}
($\psi$ having odd derivatives vanishing at $x=0$). Formula (\ref{formule_H1}) implies
\begin{equation}
\label{formule_h''}
 \frac{\partial^2 h}{\partial x^2} (t,x)=\frac{q}{2}\frac{\frac{\partial \psi}{\partial x}(t,x^{\frac{q}{2}})}{x^{1-\frac{q}{2}}} \text{\quad for all } t\geq t_0,\,x\in (0,1].
 \end{equation} 
Moreover, from \cite[Theorem 1.2]{Montaru2}, we know that 
\begin{equation}
\label{convergence_C1_h-->0}
\|h(t)\|_{C^1([0,1])}\underset{t\rightarrow +\infty}{\longrightarrow}0.
\end{equation}

We will need the following lemma, whose proof is postponed just after this one~:
\begin{lem}
\label{lem_integrale_Lh_h} Let $t>0$. Denoting $h=h(t)$, we have :
$$ \frac{h\;L_{U_a}h}{x^{2-q}\dot{U_a}^q}\in L^1(0,1)$$
and 
\begin{equation}
\label{integrale_Lh_h}
\int_0^1 \frac{h\;L_{U_a}h}{x^{2-q}\dot{U_a}^q}=-\int_0^1
\left[\frac{\dot{h}^2}{\dot{U_a}^q}-\frac{h^2}{x^{2-q}}\right].
\end{equation}
\end{lem}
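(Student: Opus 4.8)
The plan is to reduce the statement to a single integration by parts exploiting the divergence form (\ref{formule_L2}) of the linearized operator. Dividing $L_{U_a}h$ by $x^{2-q}\dot{U_a}^q$ and multiplying by $h$ gives the pointwise identity, valid on $(0,1]$,
$$\frac{h\,L_{U_a}h}{x^{2-q}\dot{U_a}^q}=h\,\frac{d}{dx}\left[\frac{\dot{h}}{\dot{U_a}^q}\right]+\frac{h^2}{x^{2-q}},$$
which makes sense for fixed $t>0$ since $h=h(t)$ is $C^1$ on $[0,1]$ (by Theorem \ref{thm_existence_u} vi)) and $C^2$ on $(0,1]$ (by the regularity of $u$). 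Throughout I would use that, because $0<a<A$ and $U_a$ is concave, $\dot{U_a}$ is bounded below by $\dot{U_a}(1)>0$ on $[0,1]$, so no singularity comes from the weight $\dot{U_a}^q$.

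First I would settle the integrability claim. The term $\frac{h^2}{x^{2-q}}$ is integrable near $0$ because $h(0)=0$ and $h\in C^1$ force $h(x)=O(x)$, hence $\frac{h^2}{x^{2-q}}=O(x^q)$ with $q>0$. For the remaining contribution it is cleaner to use the non-divergence form (\ref{formule_L1}), so that
$$\frac{h\,L_{U_a}h}{x^{2-q}\dot{U_a}^q}=\frac{h\,\ddot{h}}{\dot{U_a}^q}+q\,\frac{U_a\,h\,\dot{h}}{x^{2-q}\dot{U_a}}+\frac{h^2}{x^{2-q}}.$$
The only delicate term is the one carrying $\ddot{h}$. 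Here I would invoke (\ref{formule_h''}), which yields $\ddot{h}(t,x)=\frac{q}{2}\psi_x(t,x^{q/2})\,x^{-(1-q/2)}$, so $\ddot{h}=O\!\left(x^{-(1-q/2)}\right)$ near $0$; combined with $h=O(x)$, $U_a=O(x)$, and $\dot{h}$ bounded, each term is $O(x^{q/2})$ or $O(x^q)$ near $0$ and bounded near $1$. This gives $\frac{h\,L_{U_a}h}{x^{2-q}\dot{U_a}^q}\in L^1(0,1)$.

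Then I would integrate by parts on $[\epsilon,1]$ and let $\epsilon\to 0$:
$$\int_\epsilon^1 h\,\frac{d}{dx}\left[\frac{\dot{h}}{\dot{U_a}^q}\right]=\left[\frac{h\,\dot{h}}{\dot{U_a}^q}\right]_\epsilon^1-\int_\epsilon^1\frac{\dot{h}^2}{\dot{U_a}^q}.$$
The boundary term at $x=1$ vanishes because $h(1)=u(1)-U_a(1)=m-m=0$, and the one at $x=\epsilon$ tends to $0$ because $h(\epsilon)\to h(0)=0$ while $\dot{h}$ stays bounded and $\dot{U_a}(\epsilon)^q\to a^q>0$. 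Since $\dot{h}^2/\dot{U_a}^q$ is bounded on $[0,1]$ and the left-hand integrand lies in $L^1$, both integrals converge to their counterparts on $(0,1)$, giving $\int_0^1 h\,\frac{d}{dx}[\dot{h}/\dot{U_a}^q]=-\int_0^1 \dot{h}^2/\dot{U_a}^q$; adding back $\int_0^1 h^2/x^{2-q}$ yields (\ref{integrale_Lh_h}).

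The main obstacle is the singularity of $\ddot{h}$ at $x=0$: it is precisely the regularity information (\ref{formule_H1})--(\ref{formule_h''}) that guarantees $x^{2-q}\ddot{h}$ vanishes at the origin at the right rate, so that both the $L^1$ bound and the vanishing of the boundary term in the limit are legitimate. Everything else is the standard symmetric integration by parts for the Sturm--Liouville form of $L_{U_a}$.
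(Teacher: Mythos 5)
Your proof is correct and takes essentially the same route as the paper's: the same integration by parts on $[\epsilon,1]$ using the divergence form of $L_{U_a}$, the same vanishing of the boundary terms via $h(1)=0$ and $h(\epsilon)\to 0$ with $\dot h$ bounded, and the same reliance on the regularity formula (\ref{formule_h''}) to control the singularity of $\ddot h$ at the origin and justify the passage to the limit. Your explicit order-of-vanishing bookkeeping is just a slightly more detailed version of the paper's argument.
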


By (\ref{convergence_C1_h-->0}), there exists $t_0>0$ such that for all $t\geq t_0$, 
\begin{equation}
\label{condition_h'_petit}
\|\frac{h(t)}{\dot{U_a}}\|_{\infty,[0,1]}\leq \frac{1}{2}
\text{ \qquad and  \qquad} 
\|\dot{h}(t)\|_{\infty,[0,1]}\leq \min \left(1,\frac{2\epsilon}{Ka^{1+q}},\frac{\delta}{K}\right).
\end{equation}

Recalling (\ref{formule_F}), there exists $K>0$ such that 
\begin{equation}
\label{defintion_K}
\left|\frac{h\; F(x,h,\dot{h})}{\dot{U_a}^q}\right|\leq K\left[h^2|\dot{h}|+h^2\dot{h}^2+U_a |h| \dot{h}^2\right] 
\end{equation}
for all $(x,t)\in[0,1]\times [t_0,\infty)$ ($h$ and $\dot{h}$ depending on $t$).

Let $\delta>0$ such that $\lambda+\delta\in(0,\lambda_1-1)$ and
$\epsilon=\frac{\lambda_1-1-\lambda-\delta}{\lambda_1}>0$ which satisfies
\begin{equation}
\label{equation_epsilon}
(1-\epsilon)(\lambda_1-1)-\epsilon=\lambda+\delta.
\end{equation}

It is easy to see that for any $t\geq t_0+1$ there exists $M(t)>0$ such that  
$$\underset{[t-1,t+1]}{\sup}\|h_t\|_{\infty,[0,1]}\leq M(t).$$ 
This follows from (\ref{equation_h})(\ref{formule_L1})(\ref{formule_h''})(\ref{formule_F}) since for all $t\geq t_0$, $\|\dot{h}(t)\|_{\infty,[0,1]}\leq 1$ by (\ref{condition_h'_petit}).\\

Let $t\geq t_0+1$. From now on, we denote $h=h(t)$. We want to 
differentiate $\gamma(t)$ under the integral sign  by applying Lebesgue's dominated theorem. This is allowed since  
$$\left|\frac{h_t\; h}{x^{2-q}\dot{U_a}^q}\right|\leq \|h_t\|_{\infty,[0,1]}\|\dot{h}\|_{\infty,[0,1]}\frac{1}{x^{1-q}\;\dot{U_a}^q(1)}
\leq \frac{M(t)}{x^{1-q}\;\dot{U_a}^q(1)}.$$

Hence, we have :
\begin{align*}
\dot{\gamma}(t)&=2\int_0^1 \frac{h_t\; h}{x^{2-q}\dot{U_a}^q}
=2\int_0^1 \frac{L_{U_a}h\; h}{x^{2-q}\dot{U_a}^q}+2\int_0^1 \frac{F(x,h,\dot{h})\; h}{x^{2-q}\dot{U_a}^q}\\
 &=-2\int_0^1
\left[\frac{\dot{h}^2}{\dot{U_a}^q}-\frac{h^2}{x^{2-q}}\right]+2\int_0^1 \frac{F(x,h,\dot{h})\; h}{x^{2-q}\dot{U_a}^q}\\
&=-2(1-\epsilon)\int_0^1
\left[\frac{\dot{h}^2}{\dot{U_a}^q}-\frac{h^2}{x^{2-q}}\right]-2\epsilon\int_0^1
\left[\frac{\dot{h}^2}{\dot{U_a}^q}-\frac{h^2}{x^{2-q}}\right]+2\int_0^1 \frac{F(x,h,\dot{h})\; h}{x^{2-q}\dot{U_a}^q}\\
&\leq -2[(1-\epsilon)(\lambda_1-1)-\epsilon]\int_0^1
\frac{h^2}{x^{2-q}}-2\epsilon\int_0^1
\frac{\dot{h}^2}{\dot{U_a}^q}+2\int_0^1 \frac{F(x,h,\dot{h})\; h}{x^{2-q}\dot{U_a}^q}
\end{align*}
where we have used Lemma \ref{lem_integrale_Lh_h} and  Proposition \ref{prop_existence_lambda1}. Moreover, 
by (\ref{defintion_K}), it is easy to see that
\begin{align*}
\label{estimation_integrale_F*h}
\int_0^1 \frac{F(x,h,\dot{h})\; h}{x^{2-q}\dot{U_a}^q} &\leq K \left[\|\dot{h}\|_{\infty,[0,1]}+
\|\dot{h}\|_{\infty,[0,1]}^2\right]\int_0^1
\frac{h^2}{x^{2-q}}
+K\int_0^1 \frac{U_a}{x}\frac{|h|}{x}\,x^q\,\dot{h}^2\\
&\leq  2K \|\dot{h}\|_{\infty,[0,1]}\int_0^1
\frac{h^2}{x^{2-q}}
+Ka \|\dot{h}\|_{\infty,[0,1]}\int_0^1 \dot{h}^2.
\end{align*}
 Hence, coming back to the previous calculation and applying (\ref{equation_epsilon}), we have 
\begin{align*}
 \dot{\gamma}(t)&\leq -2(\lambda+\delta-K\|\dot{h}\|_{\infty,[0,1]})\int_0^1
\frac{h^2}{x^{2-q}}+\left[Ka \|\dot{h}\|_{\infty,[0,1]}-\frac{2\epsilon}{a^q}\right]\int_0^1 \dot{h}^2\\
&\leq  -2\lambda\int_0^1\frac{h^2}{x^{2-q}} \qquad \text{ because of } (\ref{condition_h'_petit})\\
&\leq -2\lambda \dot{U_a}(1)^q\;\gamma(t)
\end{align*} 
Then for all $t\geq t_0$,  $$\gamma(t)\leq C_1 \exp(-2\lambda \dot{U_a}(1)^q t)$$
where $$C_1= \gamma(t_0) \exp(2\lambda \dot{U_a}(1)^q t_0)$$
depends on $\lambda$ and $u_0$.\\
 Since $u_0$ has a derivative at $x=0$, it is clear that for some $\overline{a}$ large enough, $U_{\overline{a}}$ is a supersolution (see the proof of \cite[Lemma 4.1]{Montaru2}). Hence by the comparison principle
 (see \cite[Lemma 4.1]{Montaru1}), we have $u(t,x)\leq U_{\overline{a}}(x)\leq \overline{a}x$ for all $x\in[0,1]$ and $t\geq 0$,
 which implies that $\gamma$ is bounded.
So there exists $C_2=C_2(t_0,u_0)$ such that for all $t\in[0,t_0]$,
 $$\gamma(t)\leq C_2 \exp(-2\lambda \dot{U_a}(1)^q t),$$
 whence the result with $C=\max(C_1,C_2)$ depending on $u_0$ and $\lambda$.
\end{proof}

 \textbf{Remark :} 
we see that $t_0$ depends on $\lambda$ and that  $t_0\rightarrow +\infty$
 as $\lambda\rightarrow \lambda_1$. Hence, since $t_0$ may possibly go to infinity,
then we have no bound on $C$. So, 
 we cannot get the result for $\lambda=\lambda_1$, at least by this way.

\medskip
\begin{proof}[Proof of Lemma \ref{lem_integrale_Lh_h}.]
Fixing  $\epsilon\in (0,1)$, since $h\in C^2((0,1])$ and from formula (\ref{formule_L2}), we see that~:
$$\int_\epsilon^1 \frac{L_{U_a}h\;h}{x^{2-q}\dot{U_a}^q}=\int_\epsilon^1 \frac{d}{dx}\left[\frac{\dot{h}}{\dot{U_a}^q} \right]\;h+\frac{h^2}{x^{2-q}}=-\int_\epsilon^1 \left[\frac{\dot{h}^2}{\dot{U_a}^q}-\frac{h^2}{x^{2-q}}\right]-\frac{h(\epsilon)\dot{h}(\epsilon)}{\dot{U_a}^q(\epsilon)}$$
since $h(1)=0$.
Then, since $h(0)=0$ and $h\in C^1([0,1])$, we have 
$$\frac{h^2}{x^{2-q}}\in L^1(0,1)$$
 and 
 $$\frac{h(\epsilon)\dot{h}(\epsilon)}{\dot{U_a}^q(\epsilon)}\underset{\epsilon \rightarrow 0}{\longrightarrow} 0.$$
 Moreover, 
 $$\frac{d}{dx}\left[\frac{\dot{h}}{\dot{U_a}^q}\right]\in L^1(0,1)$$ since $$\frac{d}{dx}\left[\frac{\dot{h}}{\dot{U_a}^q}\right]=\frac{\ddot{h}}{\dot{U_a}^q}+q \frac{U_a}{x^{2-q}\dot{U_a}}\dot{h},$$ $$\left|\frac{U_a}{x}\right|\leq a$$ and 
because of  (\ref{formule_h''}).
 Finally, we get the result by letting $\epsilon$ go to zero since the Lebesgue's dominated theorem can be applied.
\end{proof}

\section{Convergence with exponential speed in $C^1([0,1])$}

We first give a regularizing estimate from $L$ to $C^1([0,1])$ for problem (\ref{intro_equation_h}).

\begin{lem}
\label{lem_estimation_C1_L2} Let $$N\geq 2,$$ $$0<m<M,$$ $$U_a=U_{a(m)}$$ the unique stationary solution of $(PDE_m)$ (equations (\ref{equ_u_1})-(\ref{equ_u_4})) and $$u_0\in Y_m.$$
Then, there exists $\overline{t}=\overline{t}(u_0)>0$, $T=T(N,u_0)>0$, $C=C(N,u_0)>0$ such that, for all $t_0\geq \overline{t}$ and  $t\in(0,T]$,  
$$\|u(t_0+t)-U_a\|_{C^1([0,1])}\leq \frac{C}{t^\beta}\|u(t_0)-U_a\|_L,$$
where
$$\beta=\beta(N)=1+\frac{N}{4}.$$ 

\end{lem}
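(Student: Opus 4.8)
The plan is to turn (\ref{equation_h}) into a genuinely non-degenerate parabolic problem and then invoke a standard smoothing estimate, the whole point being to neutralize the degeneracy of $x^{2-q}$ at $x=0$. First I would work only for large times. By (\ref{convergence_C1_h-->0}) there is $\overline t=\overline t(u_0)$ such that $\|h(t)\|_{C^1([0,1])}$ is as small as we please for $t\ge\overline t$; by the pointwise bound (\ref{defintion_K}) on $F$ this makes the nonlinear remainder in (\ref{equation_h}) subordinate to the linear operator $L_{U_a}$, so that on $[\overline t,\infty)$ the evolution is, up to a small perturbation treated by Duhamel on each short window, driven by $L_{U_a}$ alone. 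This reduction is what allows a \emph{linear} smoothing estimate to control the genuinely nonlinear flow.

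Next I would undegenerate the equation through the change of variable $s=x^{q/2}$, which, since $q=2/N$, is exactly the radial variable $r=x^{1/N}$ of the ball $D\subset\mathbb{R}^N$. Written for the density variable $w:=h_x$ (which is, up to the rescaling recalled in the introduction, the radial part of $\rho-\rho_a$), the principal part of the evolution becomes the honest non-degenerate $N$-dimensional radial heat operator $\partial_{ss}+\frac{N-1}{s}\,\partial_s$. The drift term $\frac{N-1}{s}$, singular at $s=0$, is harmless because of the inherited regularity (\ref{formule_H1})-(\ref{formule_H2}): we know $w(t,x)=\psi(t,x^{q/2})$ with $\psi$ smooth and even in $s$, so $w_s$ vanishes at $s=0$ and the singularity is removable. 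Thus $w$ solves a uniformly parabolic equation on $D$ with smooth coefficients, to which classical theory applies.

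I would then apply parabolic smoothing to $w$ on a window $[t_0,t_0+t]$, $t\le T$. The $N$-dimensional $L^2$--$L^\infty$ estimate (heat-kernel bound, or Nash--Moser iteration) gives $\|w(t_0+t)\|_{L^\infty(D)}\lesssim t^{-N/4}\,\|w\|_{L^2(D)}$ at an intermediate time, and relating the weak weighted norm $\|h(t_0)\|_L$ to $\|w\|_{L^2(D)}$ costs a further power of $t$, since $\|\cdot\|_L$ is of order one weaker than the density $L^2(D)$ norm; this can be quantified by an energy estimate of the type of Lemma \ref{lem_integrale_Lh_h}, combined with the Hardy inequality of Proposition \ref{prop_existence_lambda1} and the embedding $H=H^1_0\hookrightarrow C^{1/2}$ of Remark \ref{H=H10}. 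Composing the two yields $\|w(t_0+t)\|_{L^\infty}\le C\,t^{-\beta}\|h(t_0)\|_L$ with $\beta=1+\frac N4$ (the $\frac N4$ being the dimensional smoothing and the $+1$ the order-one gain from $\|\cdot\|_L$ to the density norm; I do not claim $\beta$ optimal). Since $h(t_0+t,0)=0$, one has $\|h\|_{C^1([0,1])}\le 2\|h_x\|_{L^\infty}=2\|w\|_{L^\infty(D)}$, which is the assertion.

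The hard part is precisely the degeneracy: it is what forces the passage to the radial variable and the use of the evenness and smoothness (\ref{formule_H1})-(\ref{formule_h''}) to recover a non-degenerate equation; a naive one-dimensional energy bootstrap would gain derivatives at the wrong rate and miss the dimensional factor $N/4$. The second difficulty, for $N\ge3$, is the non-Lipschitz remainder $F$: one must verify that, at large time, its Duhamel contribution does not spoil the rate $t^{-\beta}$, which follows from (\ref{defintion_K}) together with the smallness of $\|h\|_{C^1}$ on $[\overline t,\infty)$. The remaining point, routine but delicate, is to track the exact power of $t$ through the change of variable, i.e. through the mismatch between the weight $x^{-(2-q)}$ defining $L$ and the $N$-dimensional volume element on $D$.
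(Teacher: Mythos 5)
Your general strategy --- undegenerate the equation by passing to the radial variable $s=x^{1/N}$ of a ball, then use heat-semigroup smoothing together with the large-time smallness of $h$ from (\ref{convergence_C1_h-->0}) to tame the nonlinearity --- is indeed the right one, and your computation that the principal part for $h_x$ becomes the $N$-dimensional radial Laplacian is correct. But the specific implementation has two genuine gaps. First, the \emph{boundary condition}: the quantity you propose to smooth, $w=h_x$, satisfies neither a Dirichlet nor a Neumann condition at $s=1$ (only the nonlinear condition inherited from $u(t,1)=m$ through the equation itself), so the ``standard $L^2$--$L^\infty$ heat-kernel bound or Nash--Moser iteration'' and the Duhamel formula you invoke are not available off the shelf; this is not a technicality, it is the reason the paper works instead with $f=h/x$ in the radial variable of the unit ball of $\mathbb{R}^{N+2}$ (not $\mathbb{R}^{N}$: the operator $x^{2-2/N}\partial_{xx}$ acting on $h=x\tilde f(x^{1/N})$ produces $\partial_{rr}+\frac{N+1}{r}\partial_r$), for which $h(t,1)=0$ gives an honest Dirichlet condition $f=0$ on $\partial B$. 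Second, the \emph{starting norm}: a Duhamel iteration for $w$ must begin from data in some $L^p(D)$, but $\|h(t_0)\|_L$ does not control $\|w(t_0)\|_{L^2(D)}\sim\|\dot h(t_0)\|_{L^2(0,1)}$ --- by Hardy it is strictly weaker, by one derivative. Your proposed repair (``costs a further power of $t$'', accounting for the $+1$ in $\beta$) is both unjustified and dimensionally off: gaining one derivative parabolically costs $t^{-1/2}$, not $t^{-1}$, and the tools you cite (Lemma \ref{lem_integrale_Lh_h}, Proposition \ref{prop_existence_lambda1}, Remark \ref{H=H10}) are static inequalities bounding $\|h\|_L$ \emph{by} $\|\dot h\|_{L^2}$, i.e.\ they go in the wrong direction and produce no time-smoothing estimate. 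So the step that connects the hypothesis $\|u(t_0)-U_a\|_L$ to your iteration --- which is where the real work lies --- is missing.

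For comparison, the paper's choice of unknown makes both problems disappear at once: with $f(t,y)=h(N^2t,|y|^N)/|y|^N$ on $B\subset\mathbb{R}^{N+2}$ one has exactly $\|f\|_{L^2(B)}=c_N\|h\|_L$, so the iteration starts directly from the norm in the hypothesis with no derivative gap to bridge, and $f$ vanishes on $\partial B$, so the Dirichlet semigroup estimates apply verbatim. The exponent then comes out as $\beta=\tfrac12+\tfrac{N+2}{4}=1+\tfrac N4$, entirely from $L^2\to C^1$ smoothing in dimension $N+2$; your decomposition of $\beta$ as ``$N/4$ dimensional plus $1$ for the norm gap'' does not correspond to any estimate that is actually derived. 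If you want to salvage your route via the density in $\mathbb{R}^N$, you would need (i) a separate energy argument showing $\|\dot h(t_0+t)\|_{L^2}\lesssim t^{-1/2}\|h(t_0)\|_L$ despite the nonlinear boundary behaviour, and (ii) interior-plus-boundary parabolic estimates adapted to the oblique condition at $s=1$; both are substantial and neither is sketched.
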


Before giving the proof of this lemma, we need to recall some notation and well-known properties of
the Dirichlet heat semigroup on the  open unit ball $B$ of $\mathbb{R}^{N+2}$.
\begin{prop}
\label{prop_semigroupe}
We denote $(S(t))_{t \geq 0}$ the Dirichlet heat semigroup on $B=B(0,1)\subset \mathbb{R}^{N+2}$,
$$C_0(\overline{B})=\left\{f\in C(\overline{B}), \;f=0 \text { on }\partial B \right\} $$
and 
$$C_0^1(\overline{B})=\left\{f\in C^1(\overline{B}), \;f=0 \text { on }\partial B \right\}. $$
For $p>1$, 
\begin{equation}
\label{estimation_S(t)_Lp_W1,p}
\|S(t)f\|_{W^{1,p}(B)}\leq \frac{C}{\sqrt{t}}\|f\|_{L^p(B)} \text{ for all } f\in L^p(B).
\end{equation}

 Moreover, let $p>N+2$. 
 Since there exists $C>0$ such that for all $t>0$, 
$$ \|S(t)f\|_{C_0^1(\overline{B})}\leq \frac{C}{\sqrt{t}}\|f\|_{C(\overline{B})} \text{ for all } f\in C_0(\overline{B})$$
and
$$ \|S(t)f\|_{C(\overline{B})}\leq \frac{C}{t^{\frac{N+2}{2p}}}\|f\|_{L^p(B)} \text{ for all } f\in L^p(B)$$
then there exists $C>0$ such that for all $t>0$, 
\begin{equation}
\label{estimation_S(t)_Lp_C1}
\|S(t)f\|_{C^1(\overline{B})}\leq \frac{C}{t^{\gamma}}\|f\|_{L^p(B)} \text{ for all } f\in L^p(B)
\end{equation}
where $\gamma=\frac{1}{2}+\frac{N+2}{2p}<1$.
\end{prop}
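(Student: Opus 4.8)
The plan is to regard the two displayed one-parameter bounds (the $C_0(\overline{B})\to C_0^1(\overline{B})$ estimate and the $L^p(B)\to C(\overline{B})$ estimate) together with the first inequality (\ref{estimation_S(t)_Lp_W1,p}) as known regularizing properties of the Dirichlet heat semigroup, and then to obtain (\ref{estimation_S(t)_Lp_C1}) from the two intermediate ones by a single use of the semigroup law. All three inputs are classical smoothing estimates for $(S(t))_{t\ge 0}$; for instance they are collected in the appendix of \cite{QS}. I would recall their justification briefly and spend the real effort only on the composition step and on (\ref{estimation_S(t)_Lp_W1,p}).

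For (\ref{estimation_S(t)_Lp_W1,p}) I would argue as follows. Since $B$ is a smooth bounded domain and $1<p<\infty$, the Dirichlet Laplacian $-\Delta$ generates a bounded analytic semigroup on $L^p(B)$; analyticity yields $\|\Delta S(t)f\|_{L^p}\le \frac{C}{t}\|f\|_{L^p}$ and contractivity yields $\|S(t)f\|_{L^p}\le \|f\|_{L^p}$. Combining the elliptic $W^{2,p}$ regularity estimate for the Dirichlet problem, $\|v\|_{W^{2,p}}\le C\bigl(\|\Delta v\|_{L^p}+\|v\|_{L^p}\bigr)$, with the Gagliardo--Nirenberg interpolation inequality $\|\nabla v\|_{L^p}\le C\|v\|_{L^p}^{1/2}\|v\|_{W^{2,p}}^{1/2}+C\|v\|_{L^p}$ and applying them to $v=S(t)f$ gives $\|\nabla S(t)f\|_{L^p}\le \frac{C}{\sqrt t}\|f\|_{L^p}$ for $t\in(0,1]$ (the lower-order term being absorbed since $t^{-1/2}\ge 1$ there). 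For large $t$ the same bound follows from the exponential decay $\|S(t)f\|_{L^p}\le C e^{-\mu_1 t}\|f\|_{L^p}$, with $\mu_1>0$ the first Dirichlet eigenvalue, by writing $S(t)=S(1/2)\,S(t-1/2)$ and applying the small-time gradient estimate to the factor $S(1/2)$. An alternative is to identify the domain of $(-\Delta)^{1/2}$ on $L^p(B)$ with $W^{1,p}_0(B)$ up to norm equivalence (boundedness of the associated Riesz transforms) and to use $\|(-\Delta)^{1/2}S(t)\|_{L^p\to L^p}\le C t^{-1/2}$ directly.

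For (\ref{estimation_S(t)_Lp_C1}) I would split $S(t)=S(t/2)\circ S(t/2)$ and set $g=S(t/2)f$. Because the Dirichlet semigroup sends $L^p(B)$ into $C_0(\overline{B})$ for positive times, the $L^p\to C$ estimate gives $g\in C_0(\overline{B})$ with $\|g\|_{C(\overline{B})}\le \frac{C}{(t/2)^{(N+2)/(2p)}}\|f\|_{L^p}$, and then the $C_0\to C_0^1$ estimate applied to $g$ gives $\|S(t)f\|_{C^1(\overline{B})}=\|S(t/2)g\|_{C_0^1(\overline{B})}\le \frac{C}{\sqrt{t/2}}\,\|g\|_{C(\overline{B})}$. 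Multiplying the two inequalities, the powers of $t$ add up to $\tfrac12+\tfrac{N+2}{2p}=\gamma$, the hypothesis $p>N+2$ ensures $\gamma<1$, and the factors $2^\gamma$ are absorbed into the constant.

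The routine part is the composition argument, which is just the semigroup law plus bookkeeping of exponents. The one genuinely delicate point is (\ref{estimation_S(t)_Lp_W1,p}): controlling the full gradient $\nabla S(t)$ rather than $\Delta S(t)$ forces one either through the domain characterization of $(-\Delta)^{1/2}$ (a harmonic-analysis fact on a bounded domain) or through elliptic regularity and interpolation as above, while keeping the constants uniform as $t\to 0^+$ and invoking the spectral gap to cover $t$ large. Once this estimate and the two stated intermediate bounds are in hand, (\ref{estimation_S(t)_Lp_C1}) follows immediately.
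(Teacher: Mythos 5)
Your proposal is correct and follows essentially the same route as the paper: the paper simply recalls the two intermediate smoothing estimates and estimate (\ref{estimation_S(t)_Lp_W1,p}) as well-known facts about the Dirichlet heat semigroup, and obtains (\ref{estimation_S(t)_Lp_C1}) exactly as you do, by writing $S(t)=S(t/2)\circ S(t/2)$ and composing the $L^p\to C_0$ and $C_0\to C_0^1$ bounds so that the exponents add to $\gamma=\tfrac12+\tfrac{N+2}{2p}<1$. Your additional sketch of the $W^{1,p}$ estimate via analyticity, elliptic regularity and interpolation is sound but goes beyond what the paper records.
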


\begin{proof}[Proof of Lemma \ref{lem_estimation_C1_L2}.]

As before, we denote $h(t)=u(t)-U_a$. 
Now, we set 
$$w(t,y)=\frac{u(N^2t,|y|^N)}{|y|^N}$$
$$W_a(y)=\frac{U_a(|y|^N)}{|y|^N}$$
$$f(t,y)=\frac{h(N^2t,|y|^N)}{|y|^N}$$
 for all $t\geq 0$ and $y\in 
\overline{B}$ where 
$$B=B(0,1)\subset \mathbb{R}^{N+2} $$ denotes the open unit ball in 
$\mathbb{R}^{N+2} $. Then $w$ is a radial classical solution of  the following transformed problem 
called $(tPDE_m)$  :
\begin{eqnarray}
\label{*equ_w1}
 & w_t=\Delta w + N^2 w \left(w+\frac{y.\nabla 
w}{N}\right)^q  &\mbox{ on }(0,T]\times \overline{B}\\
\label{*equ_w2} \nonumber
&w(0)=w_0&\\
\label{*equ_w3}\nonumber
&w+\frac{y.\nabla w}{N}\geq 0 & \mbox{ on }(0,T]\times \overline{B}\\
\label{*equ_w4}\nonumber
&w=m&\mbox{ on } [0,T]\times \partial B
\end{eqnarray}
Here, "classical" means that for any  $T>0$,
$$ w\in C([0,T]\times \overline{B}) \bigcap C^{1,2} ((0,T]\times \overline{B}).$$
  
Note also that $W_a$ is a radial stationary solution of $(tPDE_m)$ 
and that 
$$f=w-W_a$$ 
which implies obviously $f=0$ on $\partial B$. \\

All these facts rely on the following calculations relating $h$ to $\tilde{f}$ (and also $u$ to $w$ and $U_a$ to $W_a$), where $$f(t,y)=\tilde{f}(t,|y|) \text{ for all } (t,y)\in [0,+\infty)\times \overline{B}.$$ 
We have   for $0<t\leq T$ and $0<x\leq 1$ :
\begin{eqnarray}
\label{h}
h(t,x)&=&x\; \tilde{f}\left(\frac{t}{N^2},x^{\frac{1}{N}}\right).\\
 \label{derivee_en_t_h} \nonumber
h_t(t,x)&=&\frac{x}{N^2} \; \tilde{f}_{t} 
\left(\frac{t}{N^2},x^{\frac{1}{N}}\right).\\
 \label{derivee_h}
h_x(t,x)&=&\left[\tilde{f}+\frac{r\tilde{f}_r}{N}\right] 
\left(\frac{t}{N^2},x^{\frac{1}{N}}\right)\nonumber \\
&=&\left[f+\frac{y.\nabla f}{N}\right] 
\left(\frac{t}{N^2},x^{\frac{1}{N}}\right).\\
 \label{derivee_seconde_h} 
x^{2-\frac{2}{N}}h_{xx}(t,x)&=&\frac{x}{N^2}\left[\tilde{f}_{rr}+\frac{N+1}{r}\tilde{f}_{r}\right] 
\left(\frac{t}{N^2},x^{\frac{1}{N}}\right)\nonumber\\
&=&\frac{x}{N^2}\; \Delta f \left(\frac{t}{N^2},x^{\frac{1}{N}}\right).\nonumber
\end{eqnarray}

\textbf{Remark:} we would like to mention that problem (\ref{*equ_w1}), which exhibits simple Laplacian diffusion, 
was already used in \cite{Montaru1} to prove existence of a solution for
 $(PDE_m)$ and in \cite{Montaru2} to get some estimates implying relative 
compactness of the trajectories in $C^1([0,1])$. Actually, the solution $u$ was obtained from $w$ by the formula
$$u(t,x)=x\,w(\frac{t}{N^2},x^{\frac{1}{N}}) $$
and $w$ was obtained as a limit of solutions $w^\epsilon$ of approximations of (\ref{*equ_w1}) (because the nonlinearity is non Lipschitz), the regularity of w following from that of $w^\epsilon$ since for $\alpha \in [0,\frac{2}{N})$,  $w^\epsilon$ have a bound in $C^{1+\alpha/2,2+\alpha}$ uniform in $\epsilon$. See section 4.5 in \cite{Montaru1} for more details.\\

Since $\dot{U_a}>0$ and $W_a\in C^1(\overline{B})$, then we have 
\begin{equation}
\label{derivee_W_a_positive}
W_a+\frac{y.\nabla W_a}{N} \text{ positive and bounded on }  \overline{B}.
\end{equation} 
A simple computation shows that
\begin{equation}
\label{equation_f}
f_t=\Delta f+ \Phi(y,f,\nabla f)
\end{equation}
where
\begin{align}
\label{def_Phi}
\Phi(y,f,\nabla f)=&
N^2 f\left[W_a+\frac{y.\nabla W_a}{N}+f+\frac{y.\nabla f}{N}\right]^q\nonumber \\
&+N^2 W_a \left(W_a+\frac{y.\nabla W_a}{N}\right)^q\left[\left(1+\frac{f+\frac{y.\nabla f}{N}}{W_a+\frac{y.\nabla W_a}{N}}\right)^q-1\right].
\end{align}

We  observe that, since $\tilde{f}(r)=\frac{h(r^N)}{r^N}$ 
and $B$ is the unit ball in $\mathbb{R}^{N+2}$, then
$$\|f\|_{L^2(B)}^2=\int_0^1 |S_{N+1}|r^{N+1} \frac{h(r^N)^2}{r^{2N}}\;dr
=\frac{|S_{N+1}|}{N} \int_0^1 \frac{h^2}{x^{2-\frac{2}{N}}}\;dx=
\frac{|S_{N+1}|}{N}\|h\|_L^2$$
hence
$$\|f\|_{L^2(B)}=\sqrt{\frac{|S_{N+1}|}{N}}\|h\|_L.$$ 

Other observation :
by (\ref{convergence_C1_h-->0}), 
we know that for t large enough $\|h(t)\|_{C^1([0,1])}$ is as small as 
desired. Hence, since $h(t,0)=0$, we deduce from (\ref{h}) that $\|f(t)\|_{C(\overline{B})}$ can be
 made as small as we wish for large t and then 
 $\|y.\nabla f(t)\|_{C(\overline{B})}$ also from (\ref{derivee_h}). Hence, there exists $$\overline{t}_0=\overline{t}_0(u_0)>0$$ such that for all $t\geq \overline{t}_0$, for all $y\in \overline{B}$,
 $$\left|\frac{f+\frac{y.\nabla f}{N}}{W_a+\frac{y.\nabla W_a}{N}}\right|\leq \frac{1}{2}.$$
This fact, the boundedness  of $W_a+\frac{y.\nabla W_a}{N}$ on  $\overline{B}$(above and below by a positive constant) and (\ref{def_Phi}) imply, for any $p\geq 2$, the existence of $C>0$ such that for all $t\geq \overline{t}_0$,
\begin{equation}
\label{majoration_Phi1}
\|\Phi(y,f(t),\nabla f(t))\|_{L^p(B)}\leq C \|f(t)\|_{W^{1,p}(B)} 
\end{equation}
and
\begin{equation}
\label{majoration_Phi2}
\|\Phi(y,f(t),\nabla f(t))\|_{C(\overline{B})}\leq C \|f(t)\|_{C^1(\overline{B})}
\end{equation}

\quad\\
We will now use the regularizing effect of the Dirichlet heat semigroup recalled in Properties \ref{prop_semigroupe} to show a similar property for (\ref{equation_f}).\\

\textbf{First step.} We show that for all $p\in (1,+\infty)$, there exists $T_0>0$ and $C>0$ such that
$$\|f(t_0+t)\|_{W^{1,p}} \le Ct^{-1/2} \|f(t_0)\|_p,\ \ \hbox{for all $t_0\ge \bar t_0$ et $t\in (0,T_0]$}
\leqno(A)$$
and 
$$\|f(t_0+t)\|_p \le C \|f(t_0)\|_p,\ \ \hbox{pour tout $t_0\ge \bar t_0$ et $t\in (0,T_0].$}
\leqno(A')$$
Let $t_0\geq \overline{t}$ and $t\geq t_0$.\\
 Since $w$ is a classical solution of $(tPDE_m)$ for $t>0$, then $f$
is a classical solution of (\ref{equation_f}), hence also a mild solution. So, 
\begin{equation}
\label{f_mild_solution}
f(t_0+t)=S(t)f(t_0)+\int_0^t S(t-s) \Phi(y,f(t_0+s),\nabla f(t_0+s))\,ds.
\end{equation}
Then, by (\ref{majoration_Phi1}) and (\ref{estimation_S(t)_Lp_W1,p}) we obtain : 
($C_1$ being a positive constant which my vary from line to line)
$$\|f(t_0+t)\|_{W^{1,p}(B)}\leq \frac{C_1}{\sqrt{t}}\|f(t_0)\|_{L^p(B)}+ \int_0^t \frac{C_1}{\sqrt{t-s}}\|f(t_0+s)\|_{W^{1,p}(B)}ds, $$
from which follows
$$\sqrt{t} \|f(t_0+t)\|_{W^{1,p}(B)}\leq C_1\|f(t_0)\|_{L^p(B)}+\sqrt{t}  \int_0^t \frac{C_1}{\sqrt{s(t-s)}}\sqrt{s}\|f(t_0+s)\|_{W^{1,p}(B)}ds .$$
We notice that $ \int_0^t \frac{ds}{\sqrt{s(t-s)}}=\int_0^1 \frac{dx}{\sqrt{x(1-x)}}$ by the change of variable $x=\frac{s}{t}$.\\
Let $T_0=\frac{1}{4{C_1}^2} $. Denoting 
$$a(T_0)= \underset{t\in(t_0,t_0+T_0]}{\sup} \sqrt{t} \|f(t)\|_{W^{1,p}(B)},$$ we get
$$a(T_0)\leq C_1\|f(t_0)\|_{L^p(B)}+ C_1 \sqrt{T_0}\; a_1(T_0)$$
which, by the choice of $T_0$, gives 
$$a(T_0)\leq 2C_1 \|f(t_0)\|_{L^p(B)}.$$
Hence, for all $t\in(t_0,T_0]$, 
$$\|f(t_0+t)\|_{W^{1,p}(B)}\leq \frac{2C_1}{\sqrt{t}}\|f(t_0)\|_{L^p(B)},$$
which proves $(A)$ and allows thanks to (\ref{f_mild_solution}) and (\ref{majoration_Phi1}) again to get $(A')$. \\

\textbf{Second step.} Let us set $n=N+2$.\\ We show by iteration the existence of $p\in (n,+\infty)$ and $C>0$ independent of the solution $f$ such that
$$\|f(t_0+t)\|_{W^{1,p}(B)} \le Ct^{-1/2-(n/2)(1/2-1/p)}  \|f(t_0)\|_2\ \ \hbox{for all $t_0\ge \bar t_0$ et $t\in (0,T_0].$}
\leqno(B)$$
et
$$\|f(t_0+t)\|_{L^p(B)} \le Ct^{-(n/2)(1/2-1/p)}  \|f(t_0)\|_2\ \ \hbox{for all $t_0\ge \bar t_0$ et $t\in (0,T_0].$}
\leqno(B')$$
Indeed, this is true for $p=2$ thanks to $(A)$ and $(A')$. \\
Assume that $(B)$ and $(B')$ are true for some $p\in[2,+\infty)$.\\
If $p<n$, then we prove that $(B)$ and $(B')$ are true for $p=p^*$, $p^*<+\infty$ beeing the optimal exponent such that we have the following Sobolev imbedding
$$ W^{1,p}(B) \subset L^{p^*}(B).$$
Indeed, we have by $(A)$ and Sobolev embedding that
\begin{align*}
\|f(t_0+t)\|_{W^{1,p^*}} 
&\le C(t/2)^{-1/2} \|f(t_0+(t/2))\|_{p^*} \le C(t/2)^{-1/2} \|f(t_0+(t/2))\|_{W^{1,p}} \cr
&\le C(t/2)^{-1/2}  (t/2)^{-1/2-(n/2)(1/2-1/p)} \|f(t_0)\|_2\cr
&=C  (t/2)^{-1/2-(n/2)(1/2-1/p+1/n)} \|f(t_0)\|_2
=C'  t^{-1/2-(n/2)(1/2-1/p^*)} \|f(t_0)\|_2,
\end{align*}
and, by Sobolev embedding and  (B), we have
$$\|f(t_0+t)\|_{p^*} \le C\|f(t_0+t)\|_{W^{1,p}} \le Ct^{-1/2-(n/2)(1/2-1/p)} \|f(t_0)\|_2= Ct^{-(n/2)(1/2-1/p^*)}  \|f(t_0)\|_2.$$

Iterating this process, we obtain after a finite number of steps 
some $p\in [n,+\infty)$ such that $(B)$ and $(B')$ are true. 
\begin{itemize}
\item If $p>n$, this is the result we wanted.
\item If $p=n$, since $B$ (resp. (B') ) is true for $p=2$ and $p=n$, we can interpolate
between $W^{1,2}(B)$ and $W^{1,n}(B)$ (resp. $L^2(B)$ and $L^n(B)$) and get $(B)$ (resp. (B') ) for some $p_0\in (\frac{n}{2},n)$, which, by an application of the previous process, shows $(B)$ (resp. (B') ) for $p=p_0^*>n$.
\end{itemize}
 
\textbf{Last step.} We can now prove the result by making
 a last iteration. \\
  Coming back to (\ref{f_mild_solution}), by (\ref{majoration_Phi2}) and (\ref{estimation_S(t)_Lp_C1}) we obtain : 
$$\|f(t_0+t)\|_{C^1(\overline{B})}\leq \frac{C}{t^{\gamma}}\|f(t_0)\|_{L^p(B)}+ \int_0^t \frac{C}{\sqrt{t-s}}\|f(t_0+s)\|_{C^1(\overline{B})}ds, $$
from which follows
$$t^\gamma \|f(t_0+t)\|_{C^1(\overline{B})}\leq C\|f(t_0)\|_{L^p(B)}+t^\gamma  \int_0^t \frac{C}{s^\gamma\sqrt{t-s}}s^\gamma\|f(t_0+s)\|_{C^1(\overline{B})}ds .$$
Note that $t^\gamma \int_0^t \frac{ds}{s^\gamma\sqrt{t-s}}=\int_0^1 \frac{dx}{x^\gamma\sqrt{1-x}}\;\sqrt{t}$ which is well defined since $\gamma=\frac{1}{2}+\frac{n}{2p}<1$.\\
Let $T=\frac{1}{4C^2} $. Denoting 
$$b(T)= \underset{t\in(t_0,t_0+T]}{\sup} t^\gamma \|f(t)\|_{C^1(\overline{B})},$$ we get
$$b(T)\leq C\|f(t_0)\|_{L^p(B)}+ C \sqrt{T}\; a(T)$$
which, by the choice of T, gives 
$$b(T)\leq 2C \|f(t_0)\|_{L^p(B)}.$$
Hence, for all $t\in(t_0,T]$, 
$$\|f(t_0+t)\|_{C^1(\overline{B})}\leq \frac{2C}{t^\gamma}\|f(t_0)\|_{L^p(B)},$$ 
which implies by (B') that
$$\|f(t_0+t)\|_{C^1(\overline{B})}\leq 
\frac{C'}{t^{\frac{1}{2}+\frac{n}{2p}}}\|f(t_0+t/2)\|_{L^p(B)}\leq
\frac{C'}{t^{\frac{1}{2}+\frac{n}{4}}}\|f(t_0)\|_{L^2(B)}.$$
This implies the result since 
$$\|h(t_0+t)\|_{C^1([0,1])}\leq  C_1 \|f(t_0+t)\|_{C^1(\overline{B})}\leq \frac{C_2}{t^{\frac{1}{2}+\frac{n}{4}}}\|f(t_0)\|_{L^2(B)}=\frac{C_3}{t^{\frac{1}{2}+\frac{n}{4}}}\|h(t_0)\|_L.$$
\end{proof}

\bigskip

We can now  give the proof of 
Theorem \ref{thm_convergence_expo_C1}.

\begin{proof}[Proof of Theorem \ref{thm_convergence_expo_C1}]
This follows from Lemma \ref{lem_convergence_expo_L2} and from Lemma \ref{lem_estimation_C1_L2}, which is a regularizing in time estimate. Indeed, using notation of Lemma \ref{lem_estimation_C1_L2} (having fixed $p>N$), let $t\geq \overline{t}+T$. Then
$t-T\geq \overline{t}$ so we obtain, $$\|u(t)-U_a\|_{C^1([0,1])}\leq \frac{C}{T^\gamma}\|u(t-T)-U_a\|_L\leq C(u_0,p) \exp(-\lambda\dot{U_a}(1)^q\;(t-T)),$$
which gives the result.
\end{proof}

\section{Appendix : proofs of the preliminary results for dimension $N=2$}
In this section, we first recall some basic facts about continuous dynamical systems and Lyapunov functionals. In the next subsections are the proofs of all results of section \ref{N=2}.

\subsection{Reminder on continuous dynamical systems and Lyapunov functionals}
\label{Lyapunov_rappels}
For reader's convenience, we fast recall some very basic facts on continuous dynamical systems, which are general but will be given in the context of  
$$Y_m^1=Y_m\cap C^1([0,1])$$
endowed with the induced topology of $C^1([0,1])$.
For reference, see \cite[chap. 9]{Haraux}.\\

Here follow the definitions of a continuous dynamical system, its trajectories, stationary points and $\omega$-limit sets. 
\begin{Def}
A continuous dynamical system on $Y_m^1$ is a one-parameter family of mappings $(T(t))_{t\geq 0}$ from $Y_m^1$ to $Y_m^1$   such that :
\begin{itemize}
\item [i)] $T(0)=Id$.
\item [ii)]$T(t+s)=T(t)T(s)$ for any $t,s\geq 0$.
\item [iii)]For any $t\geq 0$, $T(t)\in C(Y_m^1,Y_m^1)$.
\item [iv)]For any $u_0\in Y_m$, $t\mapsto T(t)u_0 \, \in C((0,\infty),Y_m^1)$.
\end{itemize}
\end{Def}

\begin{Def}
 Let $u_0\in Y_m^1$.
\begin{itemize}
\item $u_0$ is a stationary point  if for all $t\geq 0$, $T(t)u_0=u_0$.
 \item  $\gamma_1(u_0)=\{T(t)u_0,\; t\geq 1 \}$ is the 
 trajectory of $u_0$ from $t=1$.
\item  $\omega(u_0)=\{v\in Y_m^1,\;
\exists t_n\rightarrow +\infty,\; t_n\geq 1,\; T(t_n)u_0 \underset{n \rightarrow +\infty}{\longrightarrow}v \mbox{ in }Y_m^1\}$\\ is the $\omega$-limit set of $u_0$.
\end{itemize}
\end{Def}

Now we give the definition of a strict Lyapunov functional and  Lasalle's invariance principle.

\begin{Def}\quad 
\begin{itemize}
\item [i)]$\mathcal{F}\in C(Y_m^1,\mathbb{R})$ is a Lyapunov functional if for all $u_0\in Y_m^1$,
$$t\mapsto \mathcal{F}[T(t)u_0] \mbox{ is nonincreasing on } [0,+\infty) .$$
\item [ii)]A Lyapunov functional $\mathcal{F}$ is a strict Lyapunov functional if 
$$ \mathcal{F}[T(t)u_0] =\mathcal{F}[u_0] \mbox{ for all } t\geq 0  \mbox{ implies that $u_0$ is an equilibrium point.}$$
\end{itemize}
\end{Def}

\begin{propo} \label{equilibre} Lasalle's invariance principle.\\
Let $u_0\in Y_m^1$.
Assume that the dynamical system $(T(t))_{t\geq 0}$ admits a strict Lyapunov functional
 and that $\gamma_1(u_0)$ is relatively compact in $Y_m^1$.\\
Then the $\omega$-limit set $\omega(u_0)$ is nonempty and consists of equilibria of the dynamical system.
\end{propo}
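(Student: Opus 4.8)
The plan is to follow the classical argument for LaSalle's invariance principle, adapted to the continuous dynamical system $(T(t))_{t\geq 0}$ on $Y_m^1$. First I would check that $\omega(u_0)$ is nonempty: choosing any sequence $t_n\to+\infty$ with $t_n\geq 1$, the relative compactness of $\gamma_1(u_0)$ ensures that $(T(t_n)u_0)_n$ admits a subsequence converging in $Y_m^1$, and its limit lies in $\omega(u_0)$ by the very definition of the $\omega$-limit set.

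Next I would prove that $\mathcal{F}[T(t)u_0]$ converges as $t\to+\infty$. Since $\mathcal{F}$ is a Lyapunov functional, the map $t\mapsto\mathcal{F}[T(t)u_0]$ is nonincreasing; since $\mathcal{F}$ is continuous and $\overline{\gamma_1(u_0)}$ is compact, $\mathcal{F}$ is bounded below on $\gamma_1(u_0)$. A nonincreasing function that is bounded below converges, say to a limit $\ell\in\mathbb{R}$. Then, for any $v\in\omega(u_0)$ attained along some $T(t_n)u_0\to v$, the continuity of $\mathcal{F}$ gives $\mathcal{F}[v]=\lim_n\mathcal{F}[T(t_n)u_0]=\ell$. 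Hence $\mathcal{F}$ is constant, equal to $\ell$, on all of $\omega(u_0)$.

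The key step is the invariance of $\omega(u_0)$ under the flow. Let $v\in\omega(u_0)$, with $T(t_n)u_0\to v$, and fix $s\geq 0$. Using the semigroup identity $T(t_n+s)u_0=T(s)T(t_n)u_0$ together with the continuity of $T(s)$, I obtain $T(t_n+s)u_0\to T(s)v$. Since $t_n+s\to+\infty$ and $t_n+s\geq 1$, this shows $T(s)v\in\omega(u_0)$. Combining this with the previous step yields $\mathcal{F}[T(s)v]=\ell=\mathcal{F}[v]$ for every $s\geq 0$, that is, $\mathcal{F}$ is constant along the entire trajectory of $v$. The defining property of a \emph{strict} Lyapunov functional then forces $v$ to be an equilibrium, and since $v\in\omega(u_0)$ was arbitrary, $\omega(u_0)$ consists entirely of equilibria.

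I expect the main subtlety to lie in the invariance step, which relies essentially on the continuity of each map $T(s)$ (property iii of the dynamical system) and on the semigroup identity (property ii); once both the invariance of $\omega(u_0)$ and the constancy of $\mathcal{F}$ on it are established, the conclusion follows immediately from the definition of a strict Lyapunov functional.
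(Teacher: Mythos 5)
Your proof is correct and is exactly the classical argument that the paper itself defers to (the paper gives no proof of Proposition \ref{equilibre}, citing instead \cite[p.~143]{Haraux}): nonemptiness from relative compactness, constancy of $\mathcal{F}$ on $\omega(u_0)$ from monotonicity plus continuity, invariance of $\omega(u_0)$ from the semigroup identity and continuity of $T(s)$, and the strictness of the Lyapunov functional to conclude.
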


See \cite[p. 143]{Haraux} for a proof.

\subsection{Wellposedness and regularity for problem $(PDE_m)$}
We first remark that there is a classical comparison 
principle  available for problem $(PDE_m)$, 
which will for instance imply the uniqueness of 
the maximal classical solution in Theorem \ref{thm_existence_u}. 
 
\begin{lem}
\label{PC_PDE}
Let $T>0$. Assume that :
\begin{itemize}
\item $u_1,u_2\in C([0,T]\times [0,1])\bigcap
C^1((0,T]\times [0,1])\bigcap C^{1,2}((0,T]\times (0,1)).$ 
\item For all $t\in (0,T] $, $u_1(t)$ and $u_2(t)$ are nondecreasing. 
\item There exists $i_0\in\{1,2\}$ and some $\gamma<1$ such that 
\begin{equation}
\label{majoration_u_i0}
 \underset{t\in(0,T]}{\sup} t^\gamma\; \|u_{i_0}(t)\|_{C^1([0,1])}<\infty .
\end{equation}
\end{itemize}
Suppose moreover that :
\begin{eqnarray} 
 & {u_1}_t\leq x\;{u_1}_{xx}+u_1 {u_1}_x  & \mbox{ for all }(t,x)\in (0,T]\times (0,1)\\
 & {u_2}_t\geq x\;{u_2}_{xx}+u_2 {u_2}_x  & \mbox{ for all }(t,x)\in (0,T]\times (0,1)\\
& u_1(0,x)\leq u_2(0,x) &\mbox{ for all }x\in [0,1]\\
& u_1(t,0)\leq u_2(t,0) &\mbox{ for }t\geq 0 \\
& u_1(t,1)\leq u_2(t,1) &\mbox{ for }t\geq 0
\end{eqnarray}
Then $u_1\leq u_2$ on $[0,T]\times [0,1]$.\\
\end{lem}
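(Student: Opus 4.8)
The plan is to reduce the nonlinear statement to a linear maximum principle for the difference $w=u_1-u_2$, and then to deal with the two structural difficulties of the problem — the degeneracy of the diffusion at $x=0$ and a zeroth order coefficient that is both nonnegative and singular as $t\to0$ — by means of a time weight followed by a perturbation argument.

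First I would set $w=u_1-u_2$ and subtract the two differential inequalities. The nonlinear term can be written in either of the two forms $u_1(u_1)_x-u_2(u_2)_x=u_1\,w_x+(u_2)_x\,w=u_2\,w_x+(u_1)_x\,w$, and I would choose the decomposition so that the function carrying the $C^1$ bound, namely $u_{i_0}$, ends up in the zeroth order slot. This yields, on $(0,T]\times(0,1)$,
\[ w_t\le x\,w_{xx}+b\,w_x+c\,w, \]
where $b\in\{u_1,u_2\}$ is continuous on $[0,T]\times[0,1]$ and $c=(u_{i_0})_x$ satisfies $0\le c(t,x)\le C_0\,t^{-\gamma}$: the nonnegativity comes from $u_{i_0}(t)$ being nondecreasing, and the bound is exactly hypothesis (\ref{majoration_u_i0}). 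The hypotheses on initial and boundary data give $w\le 0$ on the parabolic boundary, i.e. at $t=0$ and at $x\in\{0,1\}$.

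The key step is to absorb the zeroth order term. Since $\gamma<1$, the function $\Lambda(t)=t^{1-\gamma}/(1-\gamma)$ is continuous on $[0,T]$ with $\Lambda(0)=0$ and $\Lambda'(t)=t^{-\gamma}$ on $(0,T]$, so it is bounded. Setting $\tilde w=e^{-C_0\Lambda(t)}w$, a direct computation turns the inequality into $\tilde w_t\le x\,\tilde w_{xx}+b\,\tilde w_x+\tilde c\,\tilde w$ with $\tilde c=c-C_0\,t^{-\gamma}\le 0$; moreover $\tilde w$ has the same sign as $w$ and still satisfies $\tilde w\le 0$ on the parabolic boundary since $e^{-C_0\Lambda}>0$. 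I would then run a perturbed maximum principle: for $\delta>0$ put $v=\tilde w-\delta e^{t}$. Using $\tilde c\le 0$ one checks that $v_t<x\,v_{xx}+b\,v_x+\tilde c\,v$ strictly on $(0,T]\times(0,1)$ and that $v<0$ on the parabolic boundary. If $\max v>0$, it is attained at some interior $(t_0,x_0)$ with $t_0>0$ and $x_0\in(0,1)$; there $v_x=0$, $v_{xx}\le 0$ and $v_t\ge 0$, so
\[ 0\le v_t(t_0,x_0)<x_0\,v_{xx}(t_0,x_0)+\tilde c\,v(t_0,x_0)\le 0, \]
a contradiction. Hence $v\le 0$, i.e. $\tilde w\le\delta e^{t}$; letting $\delta\to0$ gives $\tilde w\le 0$, whence $w\le 0$ and $u_1\le u_2$ on $[0,T]\times[0,1]$.

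The main obstacle is genuinely the zeroth order coefficient $c=(u_{i_0})_x$: being nonnegative it defeats a naive maximum principle (solutions of $u_t=x\,u_{xx}+c\,u$ with $c>0$ can grow), and being of size $t^{-\gamma}$ it cannot be killed by a plain exponential weight $e^{-\lambda t}$. The weight $e^{-C_0\Lambda(t)}$, finite precisely because $\gamma<1$, simultaneously neutralizes the sign and the time singularity of $c$. Once this is done, the degeneration of the diffusion at $x=0$ is harmless, because the boundary condition is imposed there and any interior positive maximum sits at some $x_0>0$, so the term $x_0\,v_{xx}$ retains the right sign; the first order term plays no role since $v_x$ vanishes at the maximum. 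A minor point to verify is that the regularity assumptions ($C^{1,2}$ in the open domain, $C^1$ up to $x\in\{0,1\}$ and up to $t=T$, continuity up to $t=0$) are exactly what is needed to attain the maximum and to evaluate the equation there.
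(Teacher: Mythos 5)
Your proof is correct and follows essentially the same route as the paper: both arguments absorb the nonnegative, $t^{-\gamma}$-singular zeroth-order coefficient $(u_{i_0})_x$ by a time weight that is finite precisely because $\gamma<1$ (the paper uses $e^{-\int_0^t(\|u_{i_0}(s)\|_{C^1}+1)\,ds}$, you use $e^{-C_0 t^{1-\gamma}/(1-\gamma)}$), and then conclude by evaluating the inequality at an interior positive maximum where $v_x=0$ kills the first-order term and $x_0>0$ keeps the degenerate diffusion term harmless. The only cosmetic difference is that the paper's extra ``$+1$'' in the weight makes the zeroth-order coefficient strictly negative and so dispenses with your $\delta e^{t}$ perturbation.
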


The proof of this result was given in \cite{Souplet-Nikos} under weaker assumptions. We give a different one in this simpler context.

\begin{proof}[Proof of Lemma \ref{PC_PDE}]
Let us set $$z=(u_1-u_2)e^{-\int_0^t (\|u_{i_0}(s)\|_{C^1}+1)ds},$$
well defined thanks to (\ref{majoration_u_i0}).
 The hypotheses made show that
$$z\in C([0;T]\times [0;1])\bigcap C^1((0,T]\times [0,1])\bigcap C^{1,2}((0,T]\times (0,1)).$$ 
Assume now by contradiction that $\underset{[0;T]\times [0;1]}{\max} z>0$. \\
By assumption, $z\leq 0$ on the parabolic boundary of $[0,T]\times [0,1]$. \\
Hence,
$\underset{[0;T]\times [0;1]}{\max} z$ is reached at a point $(t_0,x_0)\in (0;T]\times (0;1)$. \\Then $z_x(t_0,x_0)=0$ so  
$(u_1)_x(t_0,x_0)=(u_2)_x(t_0,x_0)$. \\
Moreover, $z_{xx}(t_0,x_0)\leq 0$ and $z_t(t_0,x_0)\geq 0$.
 But we have $$z_t(t_0,x_0)\leq x\,z_{xx}(t_0,x_0)+ \left[ (u_{i_0})_x(t_0,x_0)-\|u_{i_0}(t_0)\|_{C^1}-1\right] z(t_0,x_0).$$
 The LHS of the inequality is nonnegative and the RHS is negative, whence the 
 contradiction.
\end{proof}

Before coming to the proof of Theorem \ref{thm_existence_u}, we need to fix some notation and recall some facts about the Dirichlet heat semigroup.

For reference, see for instance the book \cite{Lunardi} of A. Lunardi.

\begin{nota} \quad 
\begin{itemize}
\item $B$ denotes the open unit ball in $\mathbb{R}^{4}$.
\item $Z_0=\{W \in C(\overline{B}), \; 
W | _{\partial B}=0 \}$.
\item $(S(t))_{t\geq 0}$ denotes the heat semigroup  on $Z_0$. It is the restriction on $Z_0$ of the Dirichlet heat semigroup on $L^2(B)$.
\item $(X_\theta)_{\theta \in [0,1]}$ denotes the scale of interpolation spaces for 
$(S(t))_{t\geq 0}$, where $X_0=Z_0$, $X_1=D(-\Delta)$ and $X_\alpha \hookrightarrow X_\beta$ with dense continuous injection for any $\alpha>\beta$, $(\alpha,\beta)\in[0,1]^2$.
\end{itemize}

\end{nota}

\begin{prop}\quad 
\begin{itemize}
\item $X_{\frac{1}{2}}=\{ W \in C^1(\overline{B}), \;W | _{\partial B}=0 \} . $
\item Let $\gamma_0 \in (0;\frac{1}{2}]$. For any $\gamma\in[0,2\gamma_0)$,  $$X_{\frac{1}{2}+\gamma_0} \subset C^{1,\gamma}
(\overline{B})$$  with continuous embedding.
\item There exists $C_D\geq 1$ such that for any $\theta \in [0;1]$,
 $W \in X_0$ and $t>0$,
$$\|S(t)W \|_{X_\theta}\leq \frac{C_D}{t^{\theta}}\|W\|_{\infty}. $$
\end{itemize}

\end{prop}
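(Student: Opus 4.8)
The plan is to read the statement entirely within the theory of analytic semigroups on spaces of continuous functions, for which all the required facts are collected in \cite{Lunardi}. First I would record the framework: the Dirichlet Laplacian, taken with domain $X_1=D(-\Delta)$ inside $X_0=Z_0$, generates a bounded analytic semigroup $(S(t))_{t\ge 0}$ on $Z_0$; by the maximum principle each $S(t)$ is a contraction, and since the smallest Dirichlet eigenvalue $\mu_1$ of $B$ is positive, the semigroup is exponentially stable, $\|S(t)\|_{\mathcal{L}(Z_0)}\le e^{-\mu_1 t}$. The spaces $X_\theta$ are the associated intermediate spaces of class $J_\theta$ between $X_0$ and $X_1$, so that, with a constant uniform in $\theta$, the interpolation inequality
$$\|W\|_{X_\theta}\le C\,\|W\|_{X_0}^{1-\theta}\,\|W\|_{X_1}^{\theta},\qquad \theta\in[0,1],$$
is at my disposal.

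For the smoothing estimate (the third assertion) I would argue directly. Analyticity gives $\|(-\Delta)S(t)\|_{\mathcal{L}(Z_0)}\le M/t$ on $(0,1]$; writing $(-\Delta)S(t)=\big((-\Delta)S(1)\big)S(t-1)$ and using the exponential decay for $t\ge 1$ promotes this to $\|(-\Delta)S(t)\|_{\mathcal{L}(Z_0)}\le C/t$ for every $t>0$. With the graph norm on $X_1$ this yields $\|S(t)W\|_{X_1}\le (C/t)\,\|W\|_\infty$, while $\|S(t)W\|_{X_0}\le\|W\|_\infty$. Inserting $W\mapsto S(t)W$ into the interpolation inequality then gives
$$\|S(t)W\|_{X_\theta}\le C\,\|W\|_\infty^{1-\theta}\Big(\tfrac{C}{t}\,\|W\|_\infty\Big)^{\theta}=\frac{C^{1+\theta}}{t^{\theta}}\,\|W\|_\infty,$$
and since $C^{1+\theta}$ stays bounded on $[0,1]$ one absorbs it into a single $C_D\ge 1$, uniform in $\theta$, exactly as stated.

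For the first two assertions I would appeal to the explicit description, due to \cite{Lunardi}, of the intermediate spaces of the Dirichlet Laplacian on $Z_0$: modulo the boundary condition $W|_{\partial B}=0$, the space $X_\theta$ coincides with the Hölder--Zygmund space of order $2\theta$. The borderline value $2\theta=1$ then produces the identity $X_{1/2}=\{W\in C^1(\overline{B}):W|_{\partial B}=0\}$, which is the first assertion. The embedding of the second assertion follows from the same characterization combined with the elementary Hölder embeddings of this scale: moving the index from $1/2$ up to $1/2+\gamma_0$ buys $2\gamma_0$ units of Hölder regularity past $C^1$, so that $X_{1/2+\gamma_0}$ embeds continuously into $C^{1,\gamma}(\overline{B})$ for every $\gamma<2\gamma_0$.

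The genuinely delicate point, and the one I expect to have to take from \cite{Lunardi} rather than verify by hand, is precisely the borderline identification $X_{1/2}=C^1_0(\overline{B})$: for a second-order operator the index $\theta=1/2$ falls exactly on the Lipschitz threshold $2\theta=1$, where a crude interpolation argument would only deliver a Zygmund-type space strictly larger than $C^1$. Pinning down the sharp equality requires the fine analysis of the domain $D(-\Delta)$ and of its interpolation with $Z_0$ carried out there; once this input is granted, the remaining assertions are soft, following from the interpolation inequality and the standard embeddings as above.
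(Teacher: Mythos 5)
The paper does not actually prove these properties: it records them as standard facts about the Dirichlet heat semigroup and its interpolation spaces, with a pointer to Lunardi's book, which is exactly the source you invoke. Your sketch is a correct account of how they follow from that theory — the smoothing estimate via analyticity plus the $J_\theta$-interpolation inequality applied to $S(t)W\in X_1$, the embedding $X_{\frac12+\gamma_0}\subset C^{1,\gamma}(\overline{B})$ from the H\"older characterization of the intermediate spaces — and you rightly single out the borderline identification of $X_{1/2}$ (where real interpolation alone would only yield a Zygmund-type space) as the one point that must be taken from the fine analysis in the reference rather than from soft interpolation.
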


We just want to introduce some specific notation we are going to use.
\begin{nota} 
Let $(a,b)\in(0,1)^2$. We denote $I(a,b)=\int_0^1 \frac{ds}{(1-s)^as^b}$.  \\ For all $ t\geq 0,\;
\int_0^t \frac{ds}{(t-s)^as^b}=t ^{1-a-b}I(a,b) .$

\end{nota}

\begin{nota} Let $m\geq 0$ and $\gamma>0$. 
\begin{itemize}
\item $Y_m=\{u\in C([0;1]) \mbox{ nondecreasing},\; u'(0) \mbox{ exists, } u(0)=0,u(1)=m\}$.
\item $Z_m=\{w\in C(\overline{B}), \; w| _ { \partial B } = m\}$.
\item  $Y_m^{1,\gamma}=\{u\in Y_m\cap C^1([0,1]), \;\underset{x\in (0,1]}{\sup}\; \frac{|u'(x)-u'(0)|}{x^\gamma}<\infty\}$.
\item $Z_m^{1,\gamma}=\{w\in Z_m\cap C^1(\overline{B}),\; 
\underset{y \in \overline{B}\backslash \{0\}}{\sup}\; \frac{|\nabla w(y)|}{ |y|^{\gamma}}<\infty \}$.
\end{itemize}
\end{nota}

\begin{proof}[Proof of Theorem \ref{thm_existence_u}] We begin by giving  a short proof of points i)ii)iii)iv).\\

We define the following transformation $\theta_0$, already remarked in \cite[section 2.2]{Brenner} and \cite[section 2]{Guerra}, and also used in \cite{Souplet-Nikos} :

 $$\begin{array}{ll}
\theta_0 : &Y_m\longrightarrow Z_m \\ 
&u\longrightarrow w \mbox{ where }w(y)= \frac{u(|y|^2)}{|y|^2}\mbox{ for all } y\in 
\overline{B}\backslash \{0\} .
\end{array}$$

The next lemma has been proved in \cite[Lemma 4.3]{Montaru1}.
\begin{lem} 
\label{propriete_theta}
 Let $m\geq 0$.
\begin{itemize}
\item[i)] $\theta_0$ sends $Y_m$ into $Z_m$.
\item [ii)]If $\gamma>\frac{1}{2}$, then  
$\theta_0$ sends $Y_m^{1,\gamma}$ into $Z_m^{1,2\gamma-1 }$.
\end{itemize}
\end{lem}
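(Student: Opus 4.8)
The plan is to work directly with the map $\theta_0 u = w$, writing $w(y) = g(|y|^2)$ with $g(x) = u(x)/x$ for $x \in (0,1]$, and to reduce both assertions to the behaviour of $g$ near $x = 0$. Away from the origin all regularity is immediate: $w$ is a composition of continuous (resp.\ $C^1$) maps with $|y|^2$ bounded away from $0$, and on $\partial B$ one has $|y| = 1$, so $w(y) = u(1) = m$. Thus everything comes down to the single point $y = 0$, where the defining formula is an indeterminate $0/0$.

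For part i), I would use that $g(x) = (u(x) - u(0))/x \to u'(0)$ as $x \to 0^+$, which holds because $u(0) = 0$ and $u'(0)$ exists by the definition of $Y_m$. Hence $w$ extends continuously across the origin with $w(0) = u'(0)$, so $w \in C(\overline{B})$, and together with $w|_{\partial B} = m$ this gives $w \in Z_m$.

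For part ii), the key computation is $\nabla w(y) = 2y\, g'(|y|^2)$ with $g'(x) = (x u'(x) - u(x))/x^2$, so the whole estimate rests on controlling $x u'(x) - u(x)$. Setting $K = \sup_{x \in (0,1]} |u'(x) - u'(0)|/x^\gamma < \infty$ (finite since $u \in Y_m^{1,\gamma}$) and using the identity $x u'(x) - u(x) = \int_0^x (u'(x) - u'(s))\,ds$ (valid because $u(0) = 0$), the bound $|u'(x) - u'(s)| \le 2K x^\gamma$ for $0 \le s \le x$ gives $|x u'(x) - u(x)| \le 2K x^{\gamma+1}$. Substituting $x = |y|^2$ yields $|\nabla w(y)| \le 4K |y|^{2\gamma - 1}$, which is exactly the defining bound of $Z_m^{1,2\gamma-1}$. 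A parallel estimate $|w(y) - w(0)| \le C |y|^{2\gamma}$ shows that $w$ is differentiable at $0$ with $\nabla w(0) = 0$, and since $2\gamma - 1 > 0$ the gradient bound forces $\nabla w(y) \to 0 = \nabla w(0)$; hence $\nabla w$ is continuous at the origin and $w \in C^1(\overline{B})$.

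The only delicate point, and the one where the hypothesis $\gamma > 1/2$ enters essentially, is the $C^1$ regularity at $y = 0$: the construction is smooth everywhere else, and it is precisely the condition $2\gamma - 1 > 0$ that makes the remainder $|\nabla w(y)| = O(|y|^{2\gamma-1})$ vanish at the origin, giving simultaneously differentiability at $0$ and continuity of the gradient there.
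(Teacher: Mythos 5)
Your proof is correct: the reduction to the behaviour of $g(x)=u(x)/x$ at $x=0$, the identity $xu'(x)-u(x)=\int_0^x(u'(x)-u'(s))\,ds$, and the resulting bound $|\nabla w(y)|\le 4K|y|^{2\gamma-1}$ (with $2\gamma-1>0$ giving $C^1$ regularity at the origin) settle both parts. The paper itself does not reproduce a proof but cites \cite[Lemma 4.3]{Montaru1}, and your argument is the standard direct computation one would expect there, so nothing further is needed.
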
 

If $u_0\in Y_m$, we set  $$w_0=\theta_0(u_0)\in Z_m$$ and $$w(t,y)=\frac{u(4t,|y|^2)}{|y|^2}$$ for all $y\in 
\overline{B}$. \\
Then we obtain a transformed problem 
called $(tPDE_m)$ with simple Laplacian diffusion in $B\subset \mathbb{R}^4$ :
\begin{eqnarray}
\label{equ_w1}
 & w_t=\Delta w + 4 w \left(w+\frac{y.\nabla 
w}{2}\right)  &\mbox{ on }(0,T]\times \overline{B}\\
\label{equ_w2}
&w(0)=w_0&\\
\label{equ_w3}
&w+\frac{y.\nabla w}{N}\geq 0 & \mbox{ on }(0,T]\times \overline{B}\\
\label{equ_w4}
&w=m&\mbox{ on } [0,T]\times \partial B
\end{eqnarray}

This relies on the following calculations relating $u$ to $\tilde{w}$, where we denote $$w(t,y)=\tilde{w}(t,|y|) \text{ for all } (t,y)\in [0,+\infty)\times \overline{B}.$$ 
We have   for $0<t\leq T$ and $0<x\leq 1$ :
\begin{eqnarray}
\label{u}
u(t,x)&=&x\; \tilde{w}\left(\frac{t}{4},\sqrt{x}\right).\\
 \label{derivee_en_t_u} \nonumber
u_t(t,x)&=&\frac{x}{4} \; \tilde{w}_{t} 
\left(\frac{t}{4},\sqrt{x}\right).\\
 \label{derivee_u}
u_x(t,x)&=&\left[\tilde{w}+\frac{r\tilde{w}_r}{2}\right] 
\left(\frac{t}{4},\sqrt{x}\right)\nonumber \\
&=&\left[w+\frac{y.\nabla w}{2}\right] 
\left(\frac{t}{4},\sqrt{x}\right).\\
 \label{derivee_seconde_u} 
x\,u_{xx}(t,x)&=&\frac{x}{4}\left[\tilde{w}_{rr}+\frac{3}{r}\tilde{w}_{r}\right] 
\left(\frac{t}{4},\sqrt{x}\right)\nonumber\\
&=&\frac{x}{4}\; \Delta w \left(\frac{t}{4},\sqrt{x}\right).\nonumber
\end{eqnarray}

 The existence of a unique maximal classical solution w on $[0,T^*)$ of problem $(tPDE_m)$ with initial condition $w_0\in Z_m$, i.e.    a function 
$$ w\in C([0,T^*)\times \overline{B}) \bigcap C^{1,2} ((0,T^*)\times \overline{B})$$
satisfying
 $(\ref{equ_w1})(\ref{equ_w2})(\ref{equ_w3})(\ref{equ_w4})$  is standard. \\
 Indeed, we can set $W=w-m$, get a corresponding equation for $W$, obtain by a fixed point argument  a mild solution W on $[0,\tau^*]$ for some small $\tau^*>0$  by use of the Dirichlet heat semigroup  since the nonlinearity is locally Lipschitz in $(w,\nabla w)$ on  $C^1(\overline{B})$, and finally exploit regularity results to prove that the solution is classical. \\
 Moreover, 
\begin{equation}
\label{majoration_w_C1}
 \underset{t\in(0,\tau^*]}{\sup} \sqrt{t} \|w(t)\|_{C^1(\overline{B})} <\infty
 \end{equation}
 Again by iteration of regularity results on (\ref{equ_w1}),  it can also be proved that 
$$ w\in C^{\infty}((0,T^*)\times \overline{B}).$$
Since $\tau^*=\tau^*(\|w_0\|_{\infty, \overline{B}})$, we also get the blow-up alternative
$$T^*=+\infty \quad \text{ or }\quad\underset{t\rightarrow T^* }{\lim}
\|w(t)\|_{\infty, \overline{B}}=+\infty.$$

Now, we  come back to problem $(PDE_m)$. Let $u_0\in Y_m$ and
 $w_0=\theta_0(u_0)$. By uniqueness, $w$ is radial because $w_0$ is.
   Then, if we set 
 $$T_{max}(u_0)=4 \,T^*(w_0).$$
and  for all $(t,x)\in [0,T_{max})\times [0,1]$, 
\begin{equation}
\label{def_u}
u(t,x)=x\; \tilde{w}(\frac{t}{4},\sqrt{x}),
\end{equation}
then we can check that $u$ is the unique maximal classical solution 
of problem $(PDE_m)$ with initial condition $u_0$. The fact that $u$ is nondecreasing on $[0,1]$ will be shown in vii). It is easy to see that the small existence time $\tau^*=\tau^*(\|w_0\|_{\infty, \overline{B}})$ for w gives a small existence time $\tau=\tau(\mathcal{N}[u_0])$ for u, i.e.  for each $K>0$, 
there exists $\tau=\tau(K)>0$ such that if $\mathcal{N}[u_0]\leq K$ then the solution u is at least defined on $[0,\tau]$. Moreover, the regularity of w implies the results of regularity on u.
Hence, we have proved i)ii)iii) and iv).\\

v) If $u_0\in Y_m^{1,\gamma}$ with $\gamma>1/2$, then 
from Lemma \ref{propriete_theta} ii), 
$$w_0=\theta_0(u_0)\in Z_m^{1,2\gamma-1}\subset C^1(\overline{B}).$$ 
We only have to check the continuity 
at $t=0$ of $t\mapsto w(t)\in C^1(\overline{B})$.
This is clear by the variation of constants formula since $t\mapsto S(t)\Phi\in 
C([0,+\infty),X_{\frac{1}{2}})$  for any $\Phi\in X_{\frac{1}{2}} $.
Hence,
 we get
 a maximal classical solution $$w\in C([0,T^*),C^1(\overline{B}))$$ 
which, thanks to formula (\ref{derivee_u}), gives a maximal classical
 solution of  $(PDE_m)$ $$ u\in C([0,T_{max}),C^1([0,1])).$$

vi) Let $(t,x)\in (0,T_{max}(u_0))\times (0,1]$. From formulas (\ref{u}) and (\ref{derivee_u}), we have
$$u(t,x)=x\; \tilde{w}\left(\frac{t}{4},\sqrt{x}\right)$$
and
$$u_x(t,x)=\tilde{w}\left(\frac{t}{4},\sqrt{x}\right)+\sqrt{x}\; \frac{\tilde{w}_r}{2}
\left(\frac{t}{4},\sqrt{x}\right).$$
These formulas  allow to prove that $u(t)\in C^1([0,1])$ with 
$u_x(t,0)=\tilde{w}(\frac{t}{4},0)$. Since $w(\frac{t}{4})$ is radial, then 
$\tilde{w}_r(\frac{t}{4},0)=0$. This implies  that for any $y\in [0,1]$,  
$$\left|\tilde{w}\left(\frac{t}{4},y\right)-\tilde{w}\left(\frac{t}{4},0\right)\right|\leq  K \;\frac{y^2}{2}$$ and
$$\left|\tilde{w}_r\left(\frac{t}{4},y\right)\right|\leq K\; y $$
where $K= \|\tilde{w}(\frac{t}{4})_{rr}\|_{\infty,[0,1]} $. 
So, we obtain
$$ |u_x(t,x)-u_x(t,0)|\leq K\, x .$$
Hence, $u(t)\in Y_m^{1,1}$.\\

 vii) Let us now show that $u_x(t,x)>0$  for all $(t,x)\in (0,T_{max})\times [0,1]$.\\

We prove the result in two steps. Let $T\in(0,T_{max}).$\\
 \underline{First step :}
 We now show that $v:=u_x \geq 0$ on $(0,T]\times 
 [0,1]$. \\
 We divide the proof in three parts.
 
 \begin{itemize}
 \item 
  \underline{First part :} We show the result for any $u_0\in Y_m^{1,\gamma}$ where 
  $\gamma>\frac{1}
 {2}$.\\
Since $u$ satisfies on $(0,T]\times (0,1]$
\begin{equation}
\label{equation_u_epsilon}
u_t=x\,u_{xx}+u\, u_x
\end{equation}
and thanks to iv), 
we can now differentiate this equation with respect to $x$. We denote 
$$b=1+u $$
  and obtain the partial 
differential equation satisfied by $v$ :
\begin{eqnarray}
\label{v_app_1}
 v_t=x\,v_{xx}+b\;v_x+v^2 &\mbox{ on }&
(0,T)\times(0,1)\\
\label{v_app_2}
 v(0,\cdotp)=(u_0)'&&\\
 \label{v_app_3}
v(t,0)=u_x(t,0) &\mbox{   for  }& t\in(0,T] \\
\label{v_app_4}
v(t,1)=u_x(t,1) &\mbox{        for } &t\in(0,T] 
\end{eqnarray}

By  vi), we know that $u\in C([0,T], C^1( 
[0,1]))$, then $v \in C([0,T]\times [0,1])$
and $v$ reaches its minimum on $[0,T]\times [0,1]$.\\ 
By comparison principle, we have $$0\leq u\leq m$$ so 
$$u_x(t,0)\geq 0$$ and $$u_x(t,1)\geq 0$$ for all $t\in(0,T]$.
Then, from $(\ref{v_app_2}),(\ref{v_app_3})$ and $(\ref{v_app_4})$, $v\geq 0$ on 
the parabolic boundary of $[0,T]\times [0,1]$. From $(\ref{v_app_1})$, we see that 
$v$ cannot reach a negative minimum 
in
$(0,T]\times (0,1)$. So 
$v \geq 0$ on $[0,T]\times [0,1]$.
\item
 \underline{Second part :} We show that if $u_0\in Y_m$, there exists $\tau\in (0,T)$ 
 such that for all
 $t\in [0,\tau]$,  $u(t)$ is nondecreasing on $[0,1]$.\\
 Let $u_0\in Y_m$. By Lemma 4.4 in \cite{Montaru1}, there exists a sequence 
 $(u_{0,n})_{n\geq 1}$  of $ Y_m^{1,1}$  such that 
 $$\|u_{0,n}-u_0\|_{\infty,[0,1]} 
 \underset{n\rightarrow \infty}{\longrightarrow} 0$$ and
$$\mathcal{N}[u_{0,n}]\leq \mathcal{N}[u_0].$$
Since $\mathcal{N}[u_{0,n}]$ is bounded, we know by ii) that 
  there exists a common small existence time $\tau\in(0,T)$ for all solutions $(u_n(t))_{ 
  t\geq 0}$ of problem $(PDE_m)$ with initial condition $u_{0,n}$.
 From first part, we know that for all
 $t\in [0,\tau]$ $u_n(t)$ is a nondecreasing function since $u_{0,n}\in Y_m^{1,1}$. 
 To prove the result, it is sufficient to show that $$\|u_n-u\|_{\infty,
 [0,1]\times [0, \tau]} 
 \underset{n\rightarrow \infty}{\longrightarrow} 0.$$
 Let $\eta>0$. By $(\ref{maj_u_C1})$, there exists $C>0$
 such that for all $t\in [0,\tau]$, $\|u(t)_x\|_\infty \leq 
 \frac{C}{\sqrt{t}}$. So we can choose
  $\eta'>0$ such that $$\eta' e^{\int_0^\tau [\|u(t)_x\|_\infty+1] \;dt}\leq 
  \eta$$
 Let $n_0\geq 1$ such that for all $n\geq n_0$, $\|u_{0,n}-u_0\|_{\infty,[0,1]}\leq \eta' $. 
 Let $n\geq n_0$.\\
 Let us set $$z(t)=[u_n(t)-u(t)]e^{-\int_0^\tau [\|u(t)_x\|
 _\infty+1] \;dt}$$ 
 We see that $z$ satisfies
 \begin{equation}
 \label{equ_aux_z}
  z_t=x\,z_{xx}+b\, z_x+c\, z
 \end{equation}
 
 where $b=u_n(t)$ and 
  $c=[u_x- \|(u)_x\|_\infty-1]<0$.\\
  Since $z\in C([0,\tau]\times [0,1])$, $z$ reaches its maximum and its minimum. \\
  Assume that this maximum is greater than $\eta'$. Since $z=0$ for $x=0$ and $x=1$ and 
  $z\leq \eta'$ for $t=0$, it can be reached only in 
  $(0,\tau]\times (0,1)$ but this is impossible because $c<0$ and (\ref{equ_aux_z}). 
  We make the similar reasoning for the minimum.
   Hence,
  $|z|\leq \eta'$ on  $[0,\tau]\times [0,1]$. \\Eventually,
  $\|u_n-u\|_{\infty,[0,1]\times [0, \tau]} \leq \eta'
   e^{\int_0^\tau [\|u(t)_x\|_\infty+1] \;dt}\leq \eta$ for all $n\geq n_0$. 
   Whence the result.
   
   \item
 \underline{Last part :} Let $u_0\in Y_m$. From the second part, there exists $\tau\in(0,T)$ 
 such that that for all $t\in[0,\tau]$, $u(t)$ is nondecreasing. Since 
 $u\in C([\tau,T],C^1([0,1])  )$ and $u(\tau)\in Y_m^{1,1}$, we can apply the same argument as in the first part to deduce that for 
 all $t\in [\tau,T]$, $u(t)$ is nondecreasing. This concludes the 
 proof of the first step.
 \end{itemize}

 \underline{Second step :} Let us show that $v>0$ on $(0,T]\times [0,1]$.\\
 0 is clearly a subsolution of problem $(tPDE_m)$ so $w\geq 0$ on $\overline{B}$ but by strong maximum principle we even have $$w>0$$ on $B$ (see \cite[Theorem 5 p.39]{Friedman}).
Then, from formula (\ref{derivee_u}) it follows that  $$v(t,0)=u_x(t,0)>0 $$ 
for 
$t\in(0,T]$.\\
Assume by contradiction that $v$ is zero at some point
 in $(0,T)\times (0,1)$. \\
 Since $v$ satisfies (\ref{v_app_1}) and the underlying operator is
 parabolic on  $(0,T)\times (0,1]$, by the strong minimum principle
 (see \cite[Theorem 5 p.39]{Friedman}),   
we deduce that $v=0$ on $(0,T)\times (0,1)$.
Then, by continuity, $v(t,0)=0$ for $t\in(0,T)$ which contradicts the previous 
assertion.\\
Suppose eventually that $v(t,1)=0$ for some $t\in(0,T)$. From $ 
(\ref{equation_u_epsilon})$, we 
deduce
that $u_{xx}(t,1)=0$, ie $$v_x(t,1)=0.$$ 
Since  $v^2\geq 0$, we observe that $v$ 
satisfies :
 \begin{equation}
v_t\geq x\,v_{xx}+[1+u]v_x
\end{equation} 
Since $v>0$ on $(0,T)\times [\frac{1}{2},1)$ and the underlying operator in the 
above 
equation is uniformly parabolic on
$(0,T)\times [\frac{1}{2},1]$, we can apply Hopf's minimum principle (cf. \cite[Theorem 
3, p.170]{Protter}) to deduce that 
$v_x(t,1)<0$ what yields a contradiction. In conclusion, $u_x>0$ on 
$(0,T]\times [0,1]$ 
for all $T<T_{max}$, whence the result.

\end{proof}

\subsection{Subcritical case : Lyapunov functional and convergence in $C^1([0,1])$}

Here are the proofs of results in subsection \ref{N=2-convergence}.

\begin{proof}[Proof of Lemma \ref{traj_globales}]
$m=0$ is trivial so we assume $0<m<2$.\\
 Let $T_{max}=T_{max}(u_0)$.\\
From Theorem $\ref{thm_existence_u}$, in order to get $T_{max}=+\infty$, it is sufficient to prove that 
$$\underset{t\in[0,T_{max})}{\sup}\mathcal{N}[u(t)]<\infty .$$
This fact  easily follows from a comparison with a supersolution of problem $(PDE_m)$.
The main idea is that since $m<2$, if $a_0$ is large enough then $$u_0\leq U_{a_0}$$ and $U_{a_0}$ is then 
a supersolution so for all $t\in [ 0,T_{max})$, $0\leq u(t)\leq U_{a_0}$  hence $$\mathcal{N}[u(t)]\leq a_0$$ since $U_{a_0}$ is concave. \\
Now we give an explicit formula for $a_0$ which will end the proof.
We denote $$a=\frac{m}{1-\frac{m}{2}}$$ which defines the unique steady state, i.e. satisfying $U_a(1)=m$.\\
 First, since $u_0$ is differentiable at $x=0$, $x\mapsto \frac{u_0(x)}{x}$ can be extended continuously to $[0;1]$, so $m\leq \mathcal{N}[u_0]<+\infty$.\\
Let us set $x_0=\frac{m}{\mathcal{N}[u_0]}\in (0,1]$. We can check that for 
 $$a_0= \frac{a}{x_0}$$ we have $$U_{a_0}(x_0)=U_{a_0x_0}(1)=m.$$ 
- For $x\in [0;x_0]$, $u_0(x)\leq \mathcal{N}[u_0] x\leq U_{a_0}(x)$ since by concavity, $U_{a_0}$ is above its chord 
between $x=0$ and $x=x_0$. \\
- For $x\in[x_0,1]$, $u_0(x)\leq m=U_{a_0}(x_0)\leq U_{a_0}(x)$ since 
$U_{a_0}$ is increasing. \\
Hence, $u_0\leq 
U_{a_0}$ on $[0,1]$ where 
$$a_0=\frac{\mathcal{N}[u_0]}{1-\frac{m}{2}} $$

\textbf{Remark : }we actually  proved the following stronger result, to be used in the next proof.\\
For each $K>0$, for any $u_0\in Y_m$ with  $\mathcal{N}[u_0]\leq K$, we have  
$$\underset{t\in [0,\infty)}{\sup}\mathcal{N}[u(t)]\leq \frac{K}{1-\frac{m}{2}}.$$
\end{proof}

\medskip

\begin{proof}[Proof of Lemma \ref{lem_compacite}]
Actually, we will prove the following stronger result :\\
If $m<2 $, $\gamma\in [0;1)$, $t_0>0$ and $K>0$, then
there exists $D_K>0$ such that for any $u_0\in Y_m $ with $\mathcal{N}[u_0]\leq K$, 
we have $$\underset {t \geq t_0}{\sup \;} \| 
 u(t) \| _{C^{1,\frac{\gamma}{N}}} \leq D_K.$$

Let $u_0\in Y_m $ such that $\mathcal{N}[u_0]\leq K$. 
Let $w_0=\theta_0(u_0)$.\\
\underline{First step :} thanks to the final remark in the proof of Lemma \ref{traj_globales}, we have
$$\underset{t\in [0,\infty)}{\sup}\mathcal{N}[u(t)]\leq C_K:=\frac{K}{1-\frac{m}{2}}.$$
Since for $t\geq0$, $\|w(t)\|_{\infty,\overline{B}}=\mathcal{N}[u(\frac{t}{4})]$, we 
deduce that $w$ is global and that 
$$\underset{t\in [0,\infty)}{\sup}\|w(t)\|_{\infty,\overline{B}}=\underset{t\in [0,\infty)}
{\sup}\mathcal{N}[u(t)]\leq C_K.$$

 \underline{Second step :} Let $$\tau=\frac{t_0}{4} $$ and $t\in [0,\tau]$. \\
Denoting $W_0=w_0-m$, then
 \begin{equation}
 \label{w_mild}
 w(t)-m=S(t)W_0+4\int_0^t S(t-s)w \left(w+\frac{x.\nabla 
 w}{2}\right) ds,
 \end{equation}
 so
 $$ \|w(t)\|_{C^1}\leq m+ \frac{C_D}{\sqrt{t}}(C_K+m) +4\int_0^t \frac{C_D}{\sqrt{t-s}}C_K 
 \|w(s)\|_{C^1} ds .$$
 Setting $h(t)=\underset{s\in (0,t]}{\sup} \sqrt{s}\|w(s)\|_{C^1}$, we have $h(t)<\infty$ by (\ref{majoration_w_C1}) and 
 $$ \sqrt{t}\|w(t)\|_{C^1}\leq m \sqrt{\tau}+ C_D(C_K+m) +4C_KC_D\sqrt{t}\int_0^t \frac{1}
 {\sqrt{s}\sqrt{t-s}} h(s) ds ,$$
 $$ \sqrt{t}\|w(t)\|_{C^1}\leq m \sqrt{\tau}+ C_D(m+C_K) +4C_KC_D I\left(\frac{1}{2},\frac{1}{2}\right) 
 \sqrt{t}\; h(t) .$$
 Let $T\in(0,\tau]$. Then,
 \begin{equation}
 \label{h(T)}
  h(T)\leq m \sqrt{\tau}+ C_D(m+C_K) +4C_KC_D I\left(\frac{1}{2},\frac{1}{2}\right) \sqrt{T}\;h(T) .
   \end{equation}
Setting $A=m \sqrt{\tau}+ C_D(m+C_K)$ and $B=8C_K C_D I \left( \frac{1}{2},\frac{1}{2} \right)  $, assume that 
there exists $T\in [0,\tau]$ such that $$h(T)=2A.$$ Then,
$$2A\leq A+\frac{B}{2}\sqrt{T}\;2A \mbox{ which implies } T\geq \frac{1}{B^2}.$$
Let us set $$\tau'=\min\left(\tau,\frac{1}{2B^2}\right).$$ 
Since $h\geq 0$ is nondecreasing, $h_0=\underset{t\rightarrow 0^+}{\lim} h(t)$ exists and  $h_0\leq A$ by (\ref{h(T)}). So by continuity of $h$ on $(0,\tau']$, 
$ h(t)\leq 2A \mbox{ for all }t\in(0,\tau'] $, that is to say :
$$ \|w(t)\|_{C^1}\leq \frac{2A}{\sqrt{t}} \mbox{ for all }t\in(0,
\tau'],$$ 
where $A$ and $\tau'$ only depend on $K$. Then, setting $A_K=2A$, we have
$$\underset{t\in[0,\tau']}{\sup}\sqrt{t}\|w(t)\|_{C^1}\leq A_K.$$

\underline{Third step :}
Let $\gamma_0\in (\frac{\gamma}{2},\frac{1}{2})$ and $t\in[0,\tau']$.\\
Setting $W=w-m$ and $W_0=w_0-m$, then for $t\geq 0$, due to (\ref{w_mild}), we get 
\begin{align*} 
  \|W(t)\|_{X_{\frac{1}{2}+\gamma_0}}&\leq  \frac{C_D}{t^{\frac{1}{2}+\gamma_0}}(C_K+m) +4\int_0^t 
\frac{C_D}{(t-s)^{\frac{1}{2}+\gamma_0}}C_K 
 \frac{A_K}{\sqrt{s}} ds .&
   \end{align*}
 Then we deduce that :
 \begin{align*} 
  t^{\frac{1}{2}+\gamma_0}\|W(t)\|_{X_{\frac{1}{2}+\gamma_0}}&\leq  C_D(C_K+m) +4C_K C_D A_K t^{\frac{1}{2}+\gamma_0}\int_0^t \frac{1}
 {(t-s)^{\frac{1}{2}+\gamma_0} \sqrt{s}}  ds &\\
  &\leq C_D(m+C_K) +4C_K C_D A_K I(\frac{1}{2}+\gamma_0,\frac{1}{2}) 
 \sqrt{\tau'}.&  
  \end{align*}
Hence, since $X_{\frac{1}{2}+\gamma_0}\subset C^{1,\gamma}(\overline{B})$, we deduce that there exists $A_K'>0$ depending only on $K$ such that  
$$\|w(\tau')\|_{C^{1,\gamma}(\overline{B})} \leq \frac{A_K'}{\tau'^{(\frac{1}{2}+\gamma_0)}}=:A_K''.$$
\underline{Last step :}
Let $t'\geq \frac{t_0}{4}$. Since $\tau'\leq\frac{t_0}{4}$, we can apply the same arguments by taking $w_0(t'-\tau)$ as initial data instead of $w_0$, so 
we obtain $$\mbox{for all } t'\geq \frac{t_0}{4},\,\|w(t')\|_{C^{1,\gamma}(\overline{B})} \leq A_K'' .$$
Finally, coming back to $u(t)$, thanks to formula (\ref{def_u}), we get an upper bound $D_K$ for $ \|  u(t) \| _{C^{1,\frac{\gamma}{N}}}$ valid for any $u_0\in Y_m$ such that $\mathcal{N}[u_0]\leq K$. 
\end{proof}

\medskip

\begin{proof}[Proof of Lemma \ref{T_dynamical_system}]

 Thanks to Lemma \ref{traj_globales}, we know that $T(t)$ 
is well defined for all $t\geq 0$ and by definition of a classical solution, $T(t)$ maps $Y_m^1$ into $Y_m^1$. \\
ii) is clear by uniqueness of the global classical solution.\\
iv) comes from the fact that $u\in C((0,\infty),C^1([0,1]))$.\\
iii) Let $t> 0$, $u_0\in Y_m^1$ and $(u_n)_{n\geq 1}\in Y_m^1$.\\
 Assume that $u_n \underset{n \rightarrow \infty}{\overset{C^1}{\longrightarrow}} u_0$.
Let us show that $u_n(t) \underset{n \rightarrow \infty}{\overset{C^1}{\longrightarrow}} u(t)$.\\
We proceed in two steps.\\
\underline{First step :}  We want to show that if $u_n \underset{n \rightarrow \infty}{\overset{C^1}{\longrightarrow}} u_0$, then 
$u_n(t) \underset{n \rightarrow \infty}{\overset{C^0}{\longrightarrow}} u(t)$.

Actually, this has already been done in the proof of Theorem \ref{thm_existence_u} vii) (in the first step, second part). 
Indeed,  the argument there shows that if all
 the $u_n$ exist on a common interval $[0,T_0]$,  then we have
$$\|u_n-u\|_{\infty,
 [0,1]\times [0, T_0]} 
 \underset{n\rightarrow \infty}{\longrightarrow} 0.$$
 But here, for all n, $T_{max}(u_n)=+\infty$, so this result can be applied to 
$T_0=t$, which implies the result.

\underline{Second step : } since $u_n \underset{n \rightarrow \infty}{\overset{C^1}{\longrightarrow}} u_0$, $\|u_n\|_{C^1}$ is bounded so there exists $K>0$ such that for all $n\geq 1$, $\mathcal{N}[u_n]\leq K$.
Then, from Lemma \ref{lem_compacite}, since $t>0$, $\{u_n(t),\;n\geq 1\}$ is relatively compact in $Y_m^1$ and has a single accumulation point $u(t)$ from first step. Whence the result.

\end{proof}

\medskip

\begin{proof}[Proof of Lemma \ref{G_Lyapunov_functional}]
Let $t>0$. We can differentiate the integral by applying Lebesgue's dominated theorem.
Indeed, let $\eta>0$ small enough so that $$I=[t-\eta,t+\eta]\subset (0,T_{max}).$$ Note : here, for $0\leq m<2$, $T_{max}=+\infty$.\\
Since $u\in C(I,C^1([0,1]))$, then $\frac{u}{x}$ is bounded on 
$I\times[0,1]$.
Since moreover, by Theorem \ref{thm_existence_u} vii), for all $t\in I$, $u_x(t)>0$ on $[0,1]$, then $\ln(u_x)$ is bounded on $I\times[0,1]$.\\

Let $(t,x)\in (0,T_{max}(u_0))\times (0,1]$. We recall that 
$$u_x(t,x)=\tilde{w}\left(\frac{t}{4},\sqrt{x}\right)+\sqrt{x}\; \frac{\tilde{w}_r}{2}
\left(\frac{t}{4},\sqrt{x}\right).$$
Hence, $$u_{xx}(t,x)=\frac{3}{4\sqrt{x}}\tilde{w}_r\left(\frac{t}{4},\sqrt{x}\right)+\frac{1}{4} \tilde{w}_{rr}
\left(\frac{t}{4},\sqrt{x}\right)$$
and
$$u_{xxx}(t,x)=-\frac{3}{8x^{\frac{3}{2}}}\tilde{w}_r\left(\frac{t}{4},\sqrt{x}\right)+\left(\frac{3}{8x}+\frac{1}{4}\right)\tilde{w}_{rr}
\left(\frac{t}{4},\sqrt{x}\right)+
\frac{1}{8\sqrt{x}} \tilde{w}_{rrr}
\left(\frac{t}{4},\sqrt{x}\right).$$
Since $$u_t=x\, u_{xx}+u\, u_x$$ and $$u_{t,x}=x\,u_{xxx}+[1+u]u_{xx}+{u_x}^2,$$
it is now easy to see that $u_t$ and $u_{t,x}$ are bounded on $I\times[0,1]$. \\
Since $u\in C^2((0,T_{max})\times (0,1])$ , then $u_{t,x}=u_{x,t}$.\\
Finally, $\ln( u_x)\, u_{t,x}-\frac{u\,u_t}{x}$ is bounded on $I\times[0,1]$.  
Hence, by direct calculation,
\begin{align*}
\frac{d}{dt}\mathcal{G}[u(t)]=&\int_0^1 \ln (u_x)\, u_{t,x}-\frac{u\,u_t}{x}
=-\int_0^1 [\frac{u_{xx}}{u_x}+\frac{u}{x}] u_t\\
=&-\int_0^1 \frac{{u_t}^2}{xu_x}
\end{align*}
where  an integration by parts was made, using that $u_t(t,0)=u_t(t,1)=0$.\\
It is easy to see that $\mathcal{G}$ is continuous on $\mathcal{M}$,
$\mathcal{G}$ is nonincreasing on the trajectories, so
 we have proved that $\mathcal{G}$ is a Lyapunov function. Now, assume that $$\mathcal{G}[u(t)]=\mathcal{G}[u_0]$$ for all $t\geq 0$.
 This implies $$\int_0^t\int_0^1 \frac{{u_t}^2}{xu_x}=0$$ so $$u_t=0$$
 on $[0,1]\times [0,t]$ for all $t\geq 0$. Hence, by continuity, for all $t\geq 0$,
 $$u(t)=u_0$$  i.e. $u$ is a steady state of $(PDE_m)$.
\end{proof}

\medskip

\begin{proof}[Proof of Lemma \ref{N=2_lem_convergence_C1}]
Let $u_0\in Y_m$ and u the global solution of $(PDE_m)$ with initial condition $u_0$. Let us set $$u_1=u(1)\in Y_m^1.$$
To get the result, we just have to study $\underset{t\rightarrow +\infty}{\lim} T(t)u_1$. \\
Thanks to Lemma $\ref{lem_compacite}$, $\gamma_1(u_1)$ is relatively compact in $Y_m^1$ and 
since $\mathcal{G}$ is a strict Lyapunov functional for $(T(t))_{t\geq 0}$, we know by Lasalle's invariance principle (Proposition \ref{equilibre}) that 
the $\omega$-limit set $\omega(u_1) $ is non empty and contains only stationary 
solutions. But
since there exists only one  
steady state $U_a$ where $$a=\frac{m}{1-\frac{m}{2}}, $$ then  $$\omega(u_1)=\{U_a\}$$ so $$T(t)u_1 \underset{t 
\rightarrow + \infty}{ \longrightarrow U_a}.$$ 
\end{proof}

\textbf{Acknowledgments :} The author would like to thank Philippe Souplet for helpful comments about this paper.

 \end{document}